\newtheorem{theorem}{Theorem}[section]
\newtheorem{proposition}[theorem]{Proposition}
\newtheorem{lemma}[theorem]{Lemma}
\newtheorem{conjecture}[theorem]{Conjecture}
\newtheorem{corollary}[theorem]{Corollary}
\theoremstyle{definition}
\newtheorem{definition}[theorem]{Definition}
\newtheorem{example}[theorem]{Example}
\newtheorem{remark}[theorem]{Remark}
\newtheoremstyle{named}{}{}{\itshape}{}{\bfseries}{.}{.5em}{#1 \thmnote{#3}}
\theoremstyle{named}
\newtheorem*{namedconjecture}{Conjecture}
\newcommand{\B}{\mathcal{B}}
\DeclareMathOperator{\supp}{supp}
\DeclareMathOperator{\rajcode}{rajcode}
\title{On the Support of Grothendieck Polynomials}
\author{Karola M\'esz\'aros}
\address{Karola M\'esz\'aros, Department of Mathematics, Cornell University, Ithaca, NY 14853. \newline\textup{karola@math.cornell.edu}
}
\author{Linus Setiabrata}
\address{Linus Setiabrata, Department of Mathematics, University of Chicago, Chicago, IL, 60637. \newline\textup{linus@math.uchicago.edu}
}
\author{Avery St.~Dizier}
\address{Avery St. Dizier, Department of Mathematics, University of Illinois at Urbana-Champaign, Urbana, IL 61801. \newline\textup{stdizie2@illinois.edu}
}
\thanks{Karola M\'esz\'aros received support from CAREER NSF Grant DMS-1847284. Avery St.~Dizier received support from NSF Grant DMS-2002079.}
\begin{document}
%	\begin{abstract}
%		Grothendieck polynomials $\mathfrak{G}_w$, $w\in S_n$, were introduced by Lascoux and Sch\"utzenberger in 1982 as a set of distinguished representatives for the K-theoretic classes of Schubert cycles in the\ K-theory of the flag variety of $\mathbb{C}^n$. 
%		We conjecture that the exponents of nonzero terms of the Grothendieck polynomial $\mathfrak{G}_w$ form a poset under componentwise comparison that is isomorphic to an induced subposet of the poset of multisets. We also conjecturally connect the coefficients of $\mathfrak{G}_w$, for $w\in S_n$ avoiding a certain set of patterns, with the M\"obius function values of the aforementioned poset with $\hat{0}$ appended. We prove special cases of our conjectures for Grassmannian and fireworks permutations.  	
%	\end{abstract}

\begin{abstract}
	Grothendieck polynomials $\mathfrak{G}_w$ of permutations $w\in S_n$ were introduced by Lascoux and Sch\"utzenberger in 1982 as a set of distinguished representatives for the K-theoretic classes of Schubert cycles in the\ K-theory of the flag variety of $\mathbb{C}^n$. We conjecture that the exponents of nonzero terms of the Grothendieck polynomial $\mathfrak{G}_w$ form a poset under componentwise comparison that is isomorphic to an induced subposet 
	of $\mathbb{Z}^n$. When $w\in S_n$ avoids a certain set of patterns, we conjecturally connect the coefficients of $\mathfrak{G}_w$ with the M\"obius function values of the aforementioned poset with $\hat{0}$ appended. We prove special cases of our conjectures for Grassmannian and fireworks permutations.  		
\end{abstract}
	\maketitle
	
	\section{Introduction}
	
	Grothendieck polynomials $\mathfrak{G}_w$ are multivariate polynomials associated to permutations $w\in S_n$. Grothendieck polynomials were introduced by Lascoux and Sch\"utzenberger in \cite{LS2} as a set of distinguished representatives for the K-theoretic classes of Schubert cycles in the\ K-theory of the flag variety of $\mathbb{C}^n$. The lowest degree component of $\mathfrak G_w$ is the Schubert polynomial $\mathfrak S_w$. Schubert polynomials have many combinatorial constructions and are well-understood \cite{laddermoves, BJS, FKschub, nilcoxeter, thomas, lenart, prismtableaux,balancedtableaux,FMS,schlorentzian,schubertgt,schubvertices, inclusionexclusion}. However there is not nearly as much known combinatorially or discrete-geometrically about Grothendieck polynomials. A recent flurry of work \cite{MTY,grothlattice,grothice,grothcoloredlattice,PSW,grothortho,symmgrothnewton} has uncovered novel formulas and perspectives on Grothendieck polynomials.
	
	The main objective of this paper is to shed light on the combinatorial structure of the support of Grothendieck polynomials. While there have been recent breakthroughs in the degree of Grothendieck polynomials \cite{symmgrothdeg, grothortho, PSW}, much less is known about the structure of the support. The support has previously been conjecturally connected to generalized permutahedra via flow polytopes \cite[Conjecture 5.1]{genpermflows}, and via the Lorentzian property \cite[Conjecture 22]{schlorentzian}. In this paper, we give a new poset theoretic perspective on the support of any Grothendieck polynomial. 
	
	\subsection{Supports of Grothendieck polynomials} 
	For a permutation $w\in S_n$, the support of $\mathfrak{G}_w$ is the set of exponents of terms in $\mathfrak{G}_w$ with nonzero coefficient. We endow the support $\supp(\mathfrak{G}_w)$ with the following poset structure.
	For $\alpha, \beta\in\mathbb{Z}^n$, define the \emph{componentwise comparison} $\leq$ by 
	$\alpha\leq \beta \mbox{ if } \alpha_i\leq \beta_i \mbox{ for all } i\in [n].$	
	\begin{figure}[ht]
		\includegraphics[scale=1]{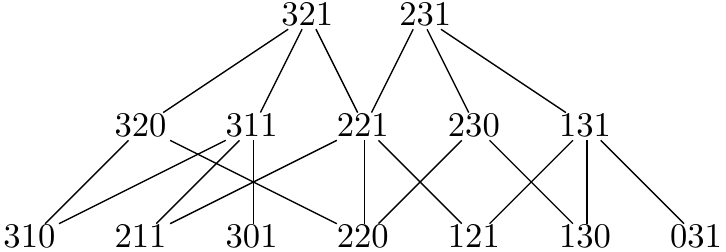}
		\caption{The Hasse diagram of $\supp(\mathfrak{G}_{15324})$ under componentwise comparison (writing exponents $(a,b,c,0,0)\in\mathbb{Z}^5$ as $abc$).}
		\label{fig:poset-example1}
	\end{figure}
	
	For $w=15324$, we have 
	\begin{align*}
	\mathfrak{G}_w &= (x_1^3x_2 + x_1^3x_3 + x_1^2x_2^2 + x_1^2 x_2 x_3 + x_1x_2^3 + x_1 x_2^2 x_3 +x_2^3 x_3)\\
				  &\phantom{aa}-(x_1^3 x_2^2+2x_1^3x_2 x_3 + x_1^2x_2^3 +2x_1^2x_2^2 x_3 + 2x_1x_2^3x_3)\\
				  &\phantom{aa}+ (x_1^3x_2^2 x_3 + x_1^2 x_2^3 x_3).
	\end{align*}
	The Hasse diagram of $\supp(\mathfrak{G}_w)$ as a poset under componentwise comparison is shown in Figure \ref{fig:poset-example1}.
	
	\begin{conjecture}
		\label{conj:1}
		If $\alpha\in \mathrm{supp}(\mathfrak{G}_w)$ and $|\alpha|<\deg \mathfrak{G}_w$, then there exists $\beta\in\mathrm{supp}(\mathfrak{G}_w)$ with $\alpha< \beta$.
	\end{conjecture}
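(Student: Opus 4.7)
The plan is to use the K-theoretic pipe dream formula
\[
\mathfrak{G}_w = \sum_D (-1)^{|D|-\ell(w)} x^{\mathrm{wt}(D)},
\]
where the sum ranges over pipe dreams $D$ whose reading word is a Hecke word for $w$ and $\mathrm{wt}(D)_i$ records the number of crosses in row $i$. Given $\alpha \in \supp(\mathfrak{G}_w)$ with $|\alpha| < \deg \mathfrak{G}_w$, the goal is to produce some coordinate $i$ and a pipe dream $D'$ of weight $\alpha + e_i$, and then to argue that $x^{\alpha + e_i}$ has nonzero coefficient in $\mathfrak{G}_w$.

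To construct $D'$, I would use the $0$-Hecke relation $T_i^2 = T_i$: duplicating a letter in a Hecke word for $w$ yields another Hecke word for $w$. In pipe dream language this corresponds to inserting an extra crossing along the anti-diagonal of an existing one (possibly after ladder moves) to produce a new valid pipe dream. The first subgoal is to show that whenever $|\alpha| < \deg\mathfrak{G}_w$, at least one such insertion is available which strictly increases $\mathrm{wt}(D)_i$ for some $i$. Producing \emph{some} pipe dream of weight $\alpha + e_i$ is the routine step; the hard step is controlling the signed cancellations in the Fomin--Kirillov sum.

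I expect cancellation to be the central obstacle. To address it, I would either construct a sign-reversing involution on pipe dreams of weight $\alpha + e_i$ that fixes a distinguished $D'$, or aim for the stronger statement that every element of $\supp(\mathfrak{G}_w)$ is componentwise dominated by some top-degree element. The latter approach reduces the problem to showing that every pipe dream can be inflated to a ``top'' pipe dream, and could be combined with the Lascoux expansion of $\mathfrak{G}_w$ to isolate the leading terms. For the Grassmannian case, Buch's set-valued tableau model replaces pipe dreams and the augmentation step becomes inserting a single entry into a box while preserving the set-valued conditions; here the cancellation is much milder, and this is likely how the paper treats that case. Fireworks permutations should then reduce to the Grassmannian setting via a factorization of $\mathfrak{G}_w$ along descent blocks.
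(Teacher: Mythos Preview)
The statement is a \emph{conjecture} in the paper, not a theorem: the paper does not prove it in general, only for fireworks and Grassmannian permutations. The cancellation obstacle you correctly identify is precisely why the general statement remains open, and your sketch does not overcome it. Neither of your proposed remedies (a sign-reversing involution on pipe dreams of weight $\alpha+e_i$, or inflating every pipe dream to a top-degree one) is carried out, and both are open problems essentially equivalent to the conjecture.

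For the special cases, your proposed routes diverge from the paper and contain a genuine error. For fireworks permutations, the paper does \emph{not} reduce to the Grassmannian case; there is no ``factorization of $\mathfrak{G}_w$ along descent blocks'' for Grothendieck polynomials (such factorizations exist for dominant-times-something decompositions in special situations, but not from the decreasing-run structure defining fireworks). Instead, the paper's argument is short and structural: by a result of M\'esz\'aros--St.~Dizier, every $\alpha\in\supp(\mathfrak{G}_w)$ satisfies $\alpha\le \mathrm{wt}(\overline{D(w)})$ componentwise; for fireworks $w$ one checks that $\mathrm{wt}(\overline{D(w)})$ coincides with the Rajchgot code $\rajcode(w)$, which Pechenik--Speyer--Weigandt showed lies in $\supp(\mathfrak{G}_w^{\mathrm{top}})$. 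Hence $\supp(\mathfrak{G}_w)$ has a \emph{unique} maximal element $\beta=\mathrm{wt}(\overline{D(w)})$ dominating every $\alpha$, and Conjecture~\ref{conj:1} follows immediately. No pipe dream manipulations or cancellation arguments are needed.

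For Grassmannian $w$, the paper again avoids any direct tableau/pipe dream insertion argument. It invokes Escobar--Yong's explicit description of $\supp(\mathfrak{G}_w)$ as a union of permutahedron lattice points indexed by a chain of partitions $\mu^{(0)},\dots,\mu^{(N)}$; given $\alpha$ with $\alpha\trianglelefteq\mu^{(j)}$ and $\mu^{(j+1)}=\mu^{(j)}+e_i$, one has $\alpha+e_i\trianglelefteq\mu^{(j+1)}$, so $\alpha+e_i\in\supp(\mathfrak{G}_w)$. Your set-valued tableau idea could perhaps be made to work here, but the paper's route sidesteps cancellation entirely by quoting a known support characterization.
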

	
	We prove Conjecture \ref{conj:1} for fireworks permutations in Theorem \ref{thm:fireworks}. A natural strengthening of Conjecture \ref{conj:1} is:	
	\begin{conjecture}
		\label{conj:2}
		If $\alpha\in \mathrm{supp}(\mathfrak{G}_w)$ and $|\alpha|<\deg \mathfrak{G}_w$, then there exists $\beta\in\mathrm{supp}(\mathfrak{G}_w)$ with $\alpha< \beta$ and $|\beta|=|\alpha|+1$.
	\end{conjecture}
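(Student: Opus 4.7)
The plan is to establish Conjecture \ref{conj:2} in the same settings where Conjecture \ref{conj:1} has been verified, namely the Grassmannian and fireworks cases, by upgrading the existing constructive step to produce a covering element of weight exactly $|\alpha|+1$ rather than merely some $\beta > \alpha$ in $\supp(\mathfrak{G}_w)$.

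I would first handle the Grassmannian case, where $\mathfrak{G}_w$ admits an alternating-sign expansion indexed by set-valued semistandard Young tableaux (SVT) of shape $\lambda(w)$ with bounded entries, following Buch. Given $\alpha \in \supp(\mathfrak{G}_w)$ with $|\alpha| < \deg \mathfrak{G}_w$, fix an SVT $T$ of weight $\alpha$. Since $T$ is not of maximal total size, the core combinatorial task is to produce a cell $c$ and an integer $k$ such that inserting $k$ into the entry set at $c$ yields a valid SVT $T'$. Then $\beta := \mathrm{wt}(T')$ agrees with $\alpha$ in all coordinates except $\beta_k = \alpha_k + 1$, giving a cover. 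I would locate such a $(c,k)$ by inspecting row tails and column tops, showing that a non-maximal SVT always admits an additional admissible entry at the boundary between a row with slack and a column whose top entry permits a larger value above it.

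For the fireworks extension, I would reuse the block decomposition of $\supp(\mathfrak{G}_w)$ that underlies the proof of Theorem \ref{thm:fireworks}, so that a single-coordinate increment in one Grassmannian block translates directly to a single-coordinate increment in the full support. The main obstacle, and where I expect the bulk of the work, is ruling out sign cancellation: the existence of an SVT of weight $\beta$ does not imply $\beta \in \supp(\mathfrak{G}_w)$, since the SVT expansion is alternating. Overcoming this would either require a sign-reversing involution on SVTs of weight $\beta$ whose fixed points form a nonempty distinguished subclass containing our augmented $T'$, or a direct restriction of the augmentation to a class of SVTs already known to contribute with consistent sign (e.g., those of a fixed, extremal total size). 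Pairing such an involution with the single-entry augmentation, so that both are defined on the same model, is the delicate point I would anticipate rather than the combinatorial augmentation itself.
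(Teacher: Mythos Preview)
Your plan rests on two misreadings of what the paper actually does.

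First, the paper does \emph{not} prove Conjecture~\ref{conj:2} for fireworks permutations; Theorem~\ref{thm:fireworks} establishes only Conjecture~\ref{conj:1} in that case. Moreover, the proof of Theorem~\ref{thm:fireworks} does not use any ``block decomposition'' into Grassmannian pieces. It proceeds by showing (via Theorem~\ref{thm:fireworks-support}) that $\supp(\mathfrak{G}_w)$ has the single element $\mathrm{wt}(\overline{D(w)})$ as its unique componentwise maximum; this immediately gives Conjecture~\ref{conj:1} but says nothing about covers, so it does not upgrade to Conjecture~\ref{conj:2}. The reduction you sketch therefore has no target to land on.

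Second, for Grassmannian $w$ the paper's argument in Theorem~\ref{thm:grass} is both different from and much shorter than your SVT approach. It invokes the Escobar--Yong description (Theorem~\ref{thm:escobar-yong}): the degree-$(\ell(w)+j)$ part of $\supp(\mathfrak{G}_w)$ is exactly $\{\alpha\geq 0:\alpha\trianglelefteq \mu^{(j)}\}$. Given $\alpha$ with $|\alpha|=\ell(w)+j$, one simply takes $\beta=\alpha+e_i$ where $\mu^{(j+1)}-\mu^{(j)}=e_i$; then $\beta\trianglelefteq\mu^{(j+1)}$ and hence $\beta\in\supp(\mathfrak{G}_w)$. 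Because Theorem~\ref{thm:escobar-yong} characterizes the support itself, the sign-cancellation obstacle you flag never arises. Your SVT route would need either the sign-reversing involution you describe or a separate argument that the augmented tableau weight survives cancellation---real work that the paper's approach sidesteps entirely by quoting a result already phrased at the level of supports.
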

	
	We also conjecture that $\supp(\mathfrak{G}_w)$ is closed under taking intervals in componentwise comparison.
	\begin{conjecture}
		\label{conj:3}
		Fix any $w\in S_n$. If $\alpha,\gamma\in\mathrm{supp}(\mathfrak{G}_w)$, then
		\[\left\{\beta\in \mathbb{Z}^n \mid \alpha\leq \beta\leq \gamma \right\} \subseteq \mathrm{supp}(\mathfrak{G}_w).  \]
	\end{conjecture}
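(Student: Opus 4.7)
The plan is to proceed by induction on $\ell(w_0)-\ell(w)$, using the K-theoretic isobaric divided difference operators $\pi_i f := \partial_i\bigl((1-x_{i+1})f\bigr)$, where $\partial_i f := (f - s_i\cdot f)/(x_i - x_{i+1})$. The base case $w = w_0$ has $\mathfrak{G}_{w_0} = x_1^{n-1} x_2^{n-2} \cdots x_{n-1}$, a single monomial, so its support is a singleton and trivially interval-closed. For the inductive step, choose $i$ with $w_i < w_{i+1}$, set $v := ws_i$ (so $\ell(v) = \ell(w)+1$), and exploit the recurrence $\mathfrak{G}_w = \pi_i \mathfrak{G}_v$. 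Assuming interval closure for $\supp(\mathfrak{G}_v)$, we aim to deduce interval closure for $\supp(\mathfrak{G}_w)$.

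The first concrete step is to work out $\pi_i$ on monomials explicitly and extract a local rule for the transformation of supports: a direct computation shows that $\pi_i(x^\alpha)$ is a signed sum of monomials $x^{\alpha'}$ with $\alpha'_j = \alpha_j$ for $j \neq i, i+1$, and $(\alpha'_i, \alpha'_{i+1})$ ranging over a prescribed ``$L$-shaped'' region determined by $(\alpha_i, \alpha_{i+1})$, with alternating signs coming from the $(1-x_{i+1})$ factor. Given $\alpha \leq \beta \leq \gamma$ with $\alpha, \gamma \in \supp(\mathfrak{G}_w)$, one lifts $\alpha$ and $\gamma$ to elements $\tilde\alpha, \tilde\gamma \in \supp(\mathfrak{G}_v)$, applies the inductive hypothesis to fill in the componentwise interval $[\tilde\alpha, \tilde\gamma] \cap \supp(\mathfrak{G}_v)$, and tries to show that after applying $\pi_i$, some intermediate lift contributes a nonzero coefficient to $x^\beta$.

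The principal obstacle is cancellation: many $\tilde\beta \in \supp(\mathfrak{G}_v)$ contribute with alternating signs to the coefficient of $x^\beta$ in $\pi_i \mathfrak{G}_v$, and a priori the net contribution could vanish. Resolving this requires a sign-reversing involution on the canceling contributions, together with a distinguished ``witness'' subset of $\supp(\mathfrak{G}_v)$ whose image in $\supp(\mathfrak{G}_w)$ demonstrates interval closure. Compounding the difficulty, the lifts $\tilde\alpha$ and $\tilde\gamma$ are nonunique and need not be componentwise comparable; showing that comparable lifts exist is itself a nontrivial ingredient, and would likely require a combinatorial model with enough structure, such as bumpless pipe dreams globally or set-valued tableaux in the Grassmannian strata.

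A more tractable route, in the spirit of the present paper, is to establish Conjecture \ref{conj:3} directly for Grassmannian and fireworks permutations using Lenart's set-valued tableau formula and the combinatorial technology developed for Theorem \ref{thm:fireworks}. In the Grassmannian case, given set-valued tableaux of shape $\lambda(w)$ witnessing $\alpha$ and $\gamma$, one would construct a tableau of weight $\beta$ by selectively adding and removing entries, then use a sign-reversing involution on the remaining weight-$\beta$ tableaux to verify nonvanishing of the coefficient. Extending this beyond the Grassmannian and fireworks classes is where a genuinely new combinatorial input appears to be needed; short of that, an alternative global strategy would be to identify $\supp(\mathfrak{G}_w)$ with the lattice points of a polytope with enough convexity to force interval closure automatically, a direction suggested by the Lorentzian and generalized-permutahedron conjectures referenced in the introduction.
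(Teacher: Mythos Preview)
The statement you are attempting to prove is a \emph{conjecture} in the paper, not a theorem; the paper does not give a proof of it in general. The only case the paper establishes is the Grassmannian case, and there the argument is indirect: Theorem~\ref{thm:grass} proves Conjecture~\ref{conj:4} (SNP plus generalized polymatroid) for Grassmannian $w$ by exhibiting an explicit paramodular pair $(y,z)$ built from $\lambda$ and $\mu^{(N)}$ and checking $\mathrm{Newton}(\mathfrak{G}_w)=Q$ using the Escobar--Yong description of $\supp(\mathfrak{G}_w)$ (Theorem~\ref{thm:escobar-yong}). Conjecture~\ref{conj:3} then follows because the defining inequalities of a generalized polymatroid are closed under componentwise intervals. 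There is no proof for fireworks permutations of Conjecture~\ref{conj:3} in the paper; only Conjecture~\ref{conj:1} is established in that case.

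Your proposal is candid about its own gaps, and those gaps are genuine. The induction via $\pi_i$ runs directly into the cancellation problem you identify: knowing that $\supp(\mathfrak{G}_v)$ is interval-closed gives no control over which monomials survive in $\pi_i\mathfrak{G}_v$, since the signed contributions to a fixed $x^\beta$ come from an unbounded family of $\tilde\beta$'s and there is no evident sign-reversing involution that leaves a designated witness fixed. The lifting step is also problematic exactly as you say: there is no reason comparable lifts $\tilde\alpha\le\tilde\gamma$ should exist in $\supp(\mathfrak{G}_v)$. In short, what you have written is a reasonable outline of \emph{why the conjecture is hard}, not a proof, and the paper agrees by leaving it open. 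Your final paragraph's ``alternative global strategy'' (identify the support with lattice points of a suitably convex polytope) is precisely the content of Conjecture~\ref{conj:4}, which the paper also leaves open outside the Grassmannian case.
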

	
	A discrete-geometric strengthening of Conjecture \ref{conj:3} is: 
	
	\begin{conjecture}
		\label{conj:4}
		For all $w\in S_n$, $\mathfrak{G}_w$ has SNP and $\mathrm{Newton}(\mathfrak{G}_w)$ is a generalized polymatroid.
	\end{conjecture}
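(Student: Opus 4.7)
The plan is to prove SNP and the generalized polymatroid property in tandem, adapting the strategy of Fink--M\'esz\'aros--St.~Dizier for Schubert polynomials. A key observation is that Conjecture~\ref{conj:3} would essentially reduce SNP to the generalized polymatroid claim, since the lattice points of a generalized polymatroid form a set closed under componentwise meet and join, so filling in the Newton polytope amounts to filling in every componentwise interval between vertices. The strategy therefore splits into (i) identifying $\mathrm{Newton}(\mathfrak{G}_w)$ as a generalized polymatroid, and (ii) proving Conjecture~\ref{conj:3} to pick up all lattice points.

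For (i), I would induct on the Bruhat order using the isobaric divided difference recursion $\mathfrak{G}_{ws_i} = \pi_i \mathfrak{G}_w$ (valid when $ws_i < w$). The base case $\mathfrak{G}_{w_0} = x_1^{n-1} x_2^{n-2} \cdots x_{n-1}$ is a monomial, with a one-point Newton polytope. The inductive step reduces to a polytope-level claim: the Newton polytope operation induced by $\pi_i$ should preserve the generalized polymatroid property. Since generalized polymatroids are characterized by having edges parallel to $\{\pm e_j,\ e_j - e_k\}$, one would verify this by tracking edge directions of the convex hull before and after applying $\pi_i$, combined with a saturation argument for integrality.

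For (ii), I would try to build, for each $\beta \in [\alpha, \gamma] \cap \mathbb{Z}^n$, a combinatorial witness certifying $\beta \in \mathrm{supp}(\mathfrak{G}_w)$. The main obstacle here is sign cancellation: Grothendieck polynomials are not monomial positive in the (K-theoretic) pipe dream basis, so exhibiting a single pipe dream with exponent $\beta$ does not suffice. One route is a degree-graded analysis: the lowest degree piece of $\mathfrak{G}_w$ is $\mathfrak{S}_w$, whose Newton polytope is already known to be a generalized permutahedron with SNP, and the higher-degree pieces could be controlled inductively using the $\pi_i$ recursion above or a Hecke-theoretic positivity.

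The hardest step will be (ii), and more specifically the precise control of cancellations: one has to rule out accidental zeros in the putative support. This is where I would expect the full conjecture to resist a direct attack, which is why the paper's partial results focus on Grassmannian and fireworks permutations where the relevant signed formulas collapse to positive ones.
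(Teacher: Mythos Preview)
A framing point first: Conjecture~\ref{conj:4} is a conjecture in the paper, not a theorem. The paper does not prove it in general; only the Grassmannian case is established (Theorem~\ref{thm:grass}), and that proof proceeds entirely differently from your sketch---it invokes the explicit description of $\supp(\mathfrak{G}_w)$ due to Escobar--Yong (Theorem~\ref{thm:escobar-yong}), which already yields SNP directly, and then writes down an explicit paramodular pair $(y,z)$ depending only on $\#I$ and verifies by hand that it cuts out $\mathrm{Newton}(\mathfrak{G}_w)$. No divided-difference induction and no interval argument appear.

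Your proposed reduction also contains a genuine error. You assert that the lattice points of a generalized polymatroid are closed under componentwise meet and join, and hence that Conjecture~\ref{conj:3} together with the polymatroid claim would yield SNP. Both parts fail. Take $Q=\conv\{0,2e_1,2e_2,2e_3\}\subset\mathbb{R}^3$, which is a generalized polymatroid (paramodular pair $y\equiv 0$, $z(I)=2$ for $I\neq\emptyset$). The lattice points $2e_1$ and $2e_2$ have join $(2,2,0)\notin Q$, so join-closure fails. Moreover the set $S=\{0,e_1,e_2,e_3,2e_1,2e_2,2e_3\}$ is closed under componentwise intervals and satisfies $\conv(S)=Q$, yet $e_1+e_2\in Q\cap\mathbb{Z}^3\setminus S$; thus interval-closure of the support plus ``Newton polytope is a generalized polymatroid'' does \emph{not} imply SNP. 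Even if (i) and (ii) were both carried out as you describe, the conclusion would not follow. Your step (i) separately hides the full difficulty: $\overline{\partial}_j$ is a signed operator, so the Newton polytope of $\overline{\partial}_j f$ is not determined by that of $f$, and ``tracking edge directions'' is not a well-defined operation at the polytope level once cancellation is in play.
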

	We prove Conjectures \ref{conj:1}--\ref{conj:4} for Grassmannian permutations in Theorem \ref{thm:grass}. In Theorem \ref{thm:superset}, we provide a polytope containing the Newton polytope $\mathrm{Newton}(\mathfrak{G}_w)$. Assuming Conjectures \ref{conj:1} and \ref{conj:4}, we characterize when equality with the Newton polytope occurs in Proposition \ref{prop:converse}.
	
	\subsection{Coefficients of Grothendieck polynomials} 
	In Definition \ref{def:poset}, we define a poset $P_w\subseteq \mathbb{Z}^n$ (under componentwise comparison) containing $\supp(\mathfrak{G}_w)$. It appears for permutations whose Schubert polynomial $\mathfrak{S}_w$ has all nonzero coefficients equal 1, that the coefficients of $\mathfrak{G}_w$ agree with the M\"obius function of $P_w$:
	
	\begin{conjecture} 
		\label{conj:mobius} 
		Let $w$ be a permutation such that all nonzero coefficients of $\mathfrak{S}_w$ equal 1. If $\mu_w$ is the M\"obius function of $P_w$, then
		\[\mathfrak{G}_w=-\sum_{\alpha\in P_w-\hat{0}}\mu_w(\hat{0}, \alpha)x^{\alpha}.\]
	\end{conjecture}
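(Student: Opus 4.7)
The plan is to convert Conjecture \ref{conj:mobius} via Möbius inversion into an equivalent partial-sum identity and then prove that identity using a combinatorial formula for Grothendieck coefficients. Setting $f(\hat{0}) := 0$ and $f(\alpha) := -\mu_w(\hat{0}, \alpha)$ for $\alpha \in P_w - \hat{0}$, and using the defining relation $\sum_{\beta \leq \alpha} \mu_w(\hat{0}, \beta) = \delta_{\alpha, \hat{0}}$, Conjecture \ref{conj:mobius} is equivalent to the partial-sum identity
\[
\sum_{\substack{\beta \in P_w - \hat{0} \\ \beta \leq \alpha}} [x^\beta] \mathfrak{G}_w \;=\; 1 \qquad \text{for every } \alpha \in P_w - \hat{0}. \qquad (\star)
\]
Since $[x^\beta]\mathfrak{G}_w$ vanishes off $\supp(\mathfrak{G}_w)$, only support elements contribute. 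The base case is clean: the minimal elements of $\supp(\mathfrak{G}_w)$ coincide with the exponents of $\mathfrak{S}_w$ (every pipe dream dominates a reduced one, and reduced pipe dreams carry the lowest total degree), so the multiplicity-free hypothesis gives $[x^\alpha]\mathfrak{G}_w = 1$ at every minimal $\alpha$.

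The real task is propagating $(\star)$ up the poset $P_w$. My proposed route is combinatorial: use the K-theoretic pipe dream expansion
\[
\mathfrak{G}_w \;=\; \sum_D (-1)^{|D| - |D_0|} x^D,
\]
where $D$ ranges over all pipe dreams for $w$ and $D_0$ is any fixed reduced one, to rewrite the left side of $(\star)$ as the signed count
\[
\sum_{D \,:\, x^D \leq \alpha} (-1)^{|D| - |D_0|}.
\]
The aim is then to construct a sign-reversing involution $\iota$ on the set $\{D : x^D \leq \alpha\}$ whose unique fixed point contributes $+1$. The multiplicity-free hypothesis should force exactly one ``minimal reduced witness'' beneath each $\alpha \in P_w - \hat{0}$ to play the role of that fixed point.

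The principal obstacle is building $\iota$ compatibly with the cutoff $\alpha$. Standard local moves on pipe dreams --- ladder moves, mitosis, and their K-theoretic variants --- alter the weight vector $x^D$ in controlled ways, but simultaneously enforcing both the involutive property and the equivalence $x^{\iota(D)} \leq \alpha \Leftrightarrow x^D \leq \alpha$ is delicate, because componentwise comparison mixes local and global data. Two backup strategies are available: (i) prove the generating-function identity
\[
\mathfrak{G}_w(x) \;=\; \Bigl(\sum_{\alpha \in P_w - \hat{0}} x^\alpha\Bigr)\prod_i (1 - x_i) \;+\; R(x),
\]
with $R(x)$ supported outside $P_w$, reducing Conjecture \ref{conj:mobius} to controlling the residue $R$; or (ii) induct on $\ell(w)$ via the Demazure operators $\pi_i$, propagating $(\star)$ along such recursions, though tracking how $P_w$ behaves under these operators is itself nontrivial. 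The Grassmannian setting of Theorem \ref{thm:grass}, where Buch's set-valued tableau formula replaces general pipe dreams and $P_w$ admits an explicit description, is the natural first testbed for all three approaches before attempting the full multiplicity-free class.
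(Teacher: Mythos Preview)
The statement you are attempting to prove is Conjecture~\ref{conj:mobius}, which the paper leaves open; it is only verified computationally for $w\in S_8$. There is no proof in the paper to compare against, and your proposal should be read as an attack on an open problem.

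Your M\"obius-inversion reformulation $(\star)$ is correct and worth recording: it shows that, for multiplicity-free $w$, Conjecture~\ref{conj:mobius} is equivalent to the assertion that $\sum_{\beta\leq\alpha} C_{w\beta}=1$ for \emph{every} $\alpha\in P_w-\hat 0$. This is a uniform strengthening of Conjecture~\ref{conj:coeff} (which demands the identity only at $\alpha\in\supp(\mathfrak G_w^{\mathrm{top}})$). The special case $\alpha=\mathrm{wt}(\overline{D(w)})$ of $(\star)$ is exactly $\mathfrak G_w(1,\ldots,1)=1$, i.e.\ Proposition~\ref{prop:grothprinspec}; note that the paper proves that case topologically, via the Euler characteristic of the pipe dream ball $\Delta_w$, not by an involution.

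The proposal is not a proof, and there is a concrete obstruction to the heuristic you invoke. You write that the multiplicity-free hypothesis ``should force exactly one minimal reduced witness beneath each $\alpha$,'' but this already fails at the top element $\alpha=\mathrm{wt}(\overline{D(w)})$: there \emph{every} reduced pipe dream of $w$ has weight $\leq\alpha$, so the set $\{D:\mathrm{wt}(D)\leq\alpha\}$ contains all of $\mathrm{RPD}(w)$. Any sign-reversing involution with a single fixed point must therefore pair reduced pipe dreams with non-reduced ones; the fixed point cannot be ``the'' reduced pipe dream below $\alpha$, and the multiplicity-free hypothesis on $\mathfrak S_w$ (which concerns weights, not pipe dreams themselves) does not single one out. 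The two backup strategies are likewise only outlines: controlling the residue $R$ in~(i) is no easier than the original conjecture, and tracking how $P_w$ transforms under $\overline\partial_j$ in~(ii) is itself an open-ended problem. As it stands, the proposal reformulates the conjecture usefully but does not advance toward a proof.
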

	It is known that all nonzero coefficients of $\mathfrak{S}_w$ equal 1 exactly when $w$ avoids the patterns 12543, 13254, 13524, 13542, 21543, 125364, 125634, 215364, 215634, 315264, 315624, and 315642 (\cite[Theorem 4.8]{zeroone}). We conjecture one final property connecting the poset structure of $\supp(\mathfrak{G}_w)$ to the coefficients of $\mathfrak{G}_w$:
	
	\begin{conjecture} 
		\label{conj:coeff}
		Fix $w\in S_n$ and let $\mathfrak{G}_w = \sum_{\alpha\in\mathbb{Z}^n}C_{w\alpha}x^\alpha$. For any $\beta\in\supp(\mathfrak{G}_w^{\mathrm{top}})$,
		\[\sum_{\alpha\leq \beta} C_{w\alpha} = 1. \]
	\end{conjecture}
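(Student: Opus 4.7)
The plan is to recast the identity as a generating-function statement and then pursue two parallel strategies, one conditional on Conjecture \ref{conj:mobius} and one independent of it. Since $\prod_{i=1}^n (1-x_i)^{-1} = \sum_{\gamma \in \mathbb{Z}_{\geq 0}^n} x^\gamma$, we have
\[
\sum_{\alpha \leq \beta} C_{w\alpha} \;=\; [x^\beta]\,F_w
\qquad\text{where } F_w := \frac{\mathfrak{G}_w}{\prod_{i=1}^n (1-x_i)},
\]
so Conjecture \ref{conj:coeff} is equivalent to $[x^\beta] F_w = 1$ for every $\beta \in \supp(\mathfrak{G}_w^{\mathrm{top}})$. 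Direct computation on small examples (for instance $w=15324$) in fact suggests the stronger statement that $F_w$ is $0/1$-valued, with the $1$-coefficients indexed precisely by the up-set of $\supp(\mathfrak{G}_w)$ in $\mathbb{Z}_{\geq 0}^n$. I would aim at this cleaner statement directly, from which Conjecture \ref{conj:coeff} follows immediately since every $\beta \in \supp(\mathfrak{G}_w^{\mathrm{top}})$ lies in the up-set.

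For the Möbius approach, suppose Conjecture \ref{conj:mobius} holds, so $C_{w\alpha} = -\mu_w(\hat{0}, \alpha)$ for $\alpha \in P_w$. Since $\supp(\mathfrak{G}_w^{\mathrm{top}}) \subseteq \supp(\mathfrak{G}_w) \subseteq P_w$ by the construction of $P_w$, each $\beta \in \supp(\mathfrak{G}_w^{\mathrm{top}})$ satisfies $\beta \in P_w$ and $\beta > \hat{0}$. The defining identity $\sum_{\hat{0} \leq \alpha \leq \beta,\,\alpha \in P_w} \mu_w(\hat{0}, \alpha) = 0$ then rearranges to $-\sum_{\hat{0}<\alpha \leq \beta} \mu_w(\hat{0},\alpha) = 1$, giving $\sum_{\alpha \leq \beta} C_{w\alpha} = 1$ at once. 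This settles Conjecture \ref{conj:coeff} for all $w$ with $0/1$ Schubert coefficients modulo Conjecture \ref{conj:mobius}, and yields the Grassmannian case unconditionally via Theorem \ref{thm:grass}.

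For an unconditional approach I would use that $\prod_i(1-x_i)$ is $s_i$-symmetric, so the isobaric Demazure operator $\pi_i$ commutes with division by $\prod_i(1-x_i)$. The recursion $\pi_i \mathfrak{G}_{ws_i} = \mathfrak{G}_w$ (for $w < ws_i$) then lifts to $F_w = \pi_i F_{ws_i}$, with base case $F_{w_0} = \sum_{\beta \geq \rho} x^\beta$ (where $\rho=(n-1,\ldots,1,0)$) manifestly $0/1$-valued. The plan is to prove inductively that $\pi_i$ sends the characteristic series of the up-set of $\supp(\mathfrak{G}_{ws_i})$ to the characteristic series of the up-set of $\supp(\mathfrak{G}_w)$, by describing the action of $\pi_i$ on such characteristic series combinatorially in terms of $s_i$-orbits and the way support propagates across a covering relation in weak order.

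The hardest step will be this inductive claim about $\pi_i$: the operator does not preserve $0/1$-valuedness of a general power series, and partial sums over the order ideal $\{\alpha \leq \beta\}$ mix coefficients of $\mathfrak{G}_w$ across several degrees, so one needs a finer invariant than mere $0/1$-ness along the induction. I expect a successful attack to combine the divided-difference recursion with an explicit combinatorial model (e.g.\ $K$-theoretic or bumpless pipe dreams) for $\mathfrak{G}_w$, constructing a sign-reversing involution indexed by $\beta$ on pipe dreams whose weight is componentwise $\leq \beta$ that leaves a single fixed point of sign $+1$. Producing such an involution that is simultaneously compatible with divided differences and with the componentwise order is, in my view, the crux of the problem.
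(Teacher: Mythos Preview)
This statement is a \emph{conjecture} in the paper; the paper does not prove it in general. What the paper does establish is the special case in which $\supp(\mathfrak{G}_w)$ has a unique maximum element, where the left-hand side collapses to the principal specialization $\mathfrak{G}_w(1,\ldots,1)$. That specialization is then shown to equal $1$ (Proposition~\ref{prop:grothprinspec}) by a topological argument: the pipe-dream complex $\Delta_w$ is a ball, so $\chi(\mathrm{int}(\Delta_w))=(-1)^d$, and the pipe-dream formula identifies $\mathfrak{G}_w(1,\ldots,1)$ with $(-1)^d\chi(\mathrm{int}(\Delta_w))$. Combined with Theorem~\ref{thm:fireworks-support}, this yields Conjecture~\ref{conj:coeff} for fireworks permutations.

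Your generating-function reformulation $[x^\beta]\,\bigl(\mathfrak{G}_w\prod_i(1-x_i)^{-1}\bigr)=1$ is correct and a nice way to package the problem, and your M\"obius derivation is valid \emph{conditional} on Conjecture~\ref{conj:mobius}. However, two of your unconditional claims are wrong. First, Theorem~\ref{thm:grass} proves Conjectures~\ref{conj:2} and~\ref{conj:4} for Grassmannian $w$, not Conjecture~\ref{conj:mobius}, so it does not make your M\"obius argument unconditional in that class. Second, Grassmannian permutations need not have $0/1$ Schubert coefficients (e.g.\ the Schur polynomial $s_{(2,1)}(x_1,x_2,x_3)$ has the monomial $x_1x_2x_3$ with coefficient $2$), so Conjecture~\ref{conj:mobius} does not even apply to them all. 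Your second strategy via $\pi_i$ and $F_w$ is a plausible outline, but, as you yourself acknowledge, the key inductive step is unproved: $\pi_i$ does not in general send characteristic series of up-sets to characteristic series of up-sets, and the promised sign-reversing involution has not been constructed. So at present you have a useful reformulation and a conditional implication, but no unconditional proof of any case; the paper's Euler-characteristic argument, by contrast, unconditionally settles the fireworks (unique-maximum) case by a completely different, topological, route.
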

%	In certain cases that the highest-degree homogeneous component $\mathfrak{G}_w^{\mathrm{top}}$ has only a single term,
	When the poset $\supp(\mathfrak{G}_w)$ has a unique maximum element, 
	Conjecture \ref{conj:coeff} coincides with the principal specialization $\mathfrak{G}_w(1,\ldots,1)$. While we could not find a proof in the literature, it is known that $\mathfrak{G}_w(1,\ldots,1)=1$ (see for instance \cite[Comment 3.2]{dunkl}). We provide a short proof in Proposition \ref{prop:grothprinspec}, concluding a special case of Conjecture \ref{conj:coeff}. We have tested Conjectures \ref{conj:1}--\ref{conj:coeff} for all $w\in S_8$. We note that all our conjectures naturally lift to double Grothendieck polynomials, and appear to hold there as well.
	
	 \subsection{Outline of the paper} Section \ref{sec:back} covers the necessary background for the paper. In Section \ref{sec:support} we elaborate on Conjectures \ref{conj:1}--\ref{conj:4} and prove related results on the support of Grothendieck polynomials. In Section \ref{sec:mobius} we conjecturally connect the coefficients of certain Grothendieck polynomials to the M\"obius function of a poset. We conclude by considering the principal specialization of Grothendieck polynomials.
	
	\section{Background}
	\label{sec:back}
	\subsection{Conventions}
	For $n\in \mathbb{N}$, we use the notation $[n]$ to mean the set $\{1,2,\ldots,n \}$. We reserve lowercase Greek letters $\alpha,\beta,\gamma,\delta,\epsilon$ for nonnegative integer vectors in $\mathbb{R}^n$; we opt for $t$ to denote arbitrary vectors in $\mathbb{R}^n$. We write $|\alpha|$ for $\alpha_1+\cdots+\alpha_n$. We use $x$ to represent the collection of variables $x_1,x_2,\ldots,x_n$, so $x^\alpha$ denotes the monomial $x_1^{\alpha_1}\cdots x_n^{\alpha_n}$.
	
	For $j\in [n-1]$, $s_j$ will denote the adjacent transposition in the symmetric group $S_n$ swapping $j$ and $j+1$. We otherwise represent permutations $w\in S_n$ in one-line notation as a word $w(1)w(2)\cdots w(n)$, so $w=312\in S_3$ is the permutation that sends $1\mapsto 3$, $2\mapsto 1$, and $3\mapsto 2$. Throughout, we will take permutations as acting on the right (switching positions, not values). For example $ws_1$ equals $w$ with the numbers $w(1)$ and $w(2)$ swapped. We write $\ell(w)$ for the number of inversions of $w$.
	
	\subsection{Schubert and Grothendieck Polynomials}
	
	\begin{definition}
		Fix any $n\geq 1$. The \emph{divided difference operators} $\partial_j$ for $j\in[n-1]$ are operators on the polynomial ring $\mathbb{Z}[x_1,\ldots,x_n]$ defined by
		\[\partial_j(f)=\frac{f-(s_j\cdot f)}{x_j-x_{j+1}}
		=\frac{f(x_1,\ldots,x_n)-f(x_1,\ldots,x_{j-1},x_{j+1},x_j,x_{j+2},\ldots,x_n)}{x_j-x_{j+1}}.\]
		The \emph{isobaric divided difference operators} $\overline{\partial}_j$ are defined on $\mathbb{Z}[x_1,\ldots,x_n]$ by
		\begin{align*}
		\overline{\partial}_j(f)&=\partial_j((1-x_{j+1})f).
		\end{align*}
	\end{definition}
	
	\begin{definition}
		The \emph{Schubert polynomial} $\mathfrak{S}_w$ of $w\in S_n$ is defined recursively on the weak Bruhat order. Let $w_0=n \hspace{.1cm} n\!-\!1 \hspace{.1cm} \cdots \hspace{.1cm} 2 \hspace{.1cm} 1 \in S_n$, the longest permutation in $S_n$. If $w\neq w_0$ then there is $j\in [n-1]$ with $w(j)<w(j+1)$ (called an \emph{ascent} of $w$). The polynomial $\mathfrak{S}_w$ is defined by
		\begin{align*}
		\mathfrak{S}_w=\begin{cases}
		x_1^{n-1}x_2^{n-2}\cdots x_{n-1}&\mbox{ if } w=w_0,\\
		\partial_j \mathfrak{S}_{ws_j} &\mbox{ if } w(j)<w(j+1).
		\end{cases}
		\end{align*}
	\end{definition}
	
	\begin{definition}
		The \emph{Grothendieck polynomial} $\mathfrak{G}_w$ of $w\in S_n$ is defined analogously to the Schubert polynomial,
		with
		\begin{align*}
		\mathfrak{G}_w=\begin{cases}
		x_1^{n-1}x_2^{n-2}\cdots x_{n-1}&\mbox{ if } w=w_0,\\
		\overline{\partial}_j \mathfrak{G}_{ws_j} &\mbox{ if } w(j)<w(j+1).
		\end{cases}
		\end{align*}	
	\end{definition}

	It can be seen from the recursive definitions that $\mathfrak{S}_w$ is homogeneous of degree equal to $\ell(w)$, and equals the lowest-degree nonzero homogeneous component of $\mathfrak{G}_w$. See \cite{manivel} for a deeper introduction to Schubert polynomials. We now recall pipe dreams, one of many combinatorial constructions of Schubert and Grothendieck polynomials.
	
	\begin{definition}
		A \emph{pipe dream} for $w\in S_n$ is a tiling of an $n\times n$ matrix with crosses \includegraphics[scale=.13]{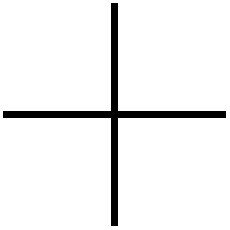} and elbows \includegraphics[scale=.13]{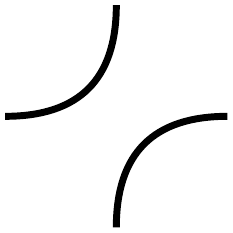} such that
		\begin{itemize}
			\item All tiles in the weak south-east triangle are elbows, and
			\item If you write $1,2,\ldots, n$ on the top left-to-right and follow the strands (treating second crossings among the same strands as elbows instead), they come out on the left edge and read $w$ from top to bottom.
		\end{itemize}
		A pipe dream is \emph{reduced} if no two strands cross twice. Let $\mathrm{RPD}(w)$ and $\mathrm{PD}(w)$ denote respectively the sets of reduced pipe dreams of $w$ and all pipe dreams of $w$.
%		The \emph{weight} of a pipe dream $P$ is the vector $\mathrm{wt}(P)$ whose $i$th component counts the number of crosses in row $i$ of $P$.
	\end{definition}
	
	\begin{theorem}[{\cite{FKschub,laddermoves,FKgroth}}]
		\label{thm:pdformula}
		For any permutation $w\in S_n$,
		\[\mathfrak{S}_w = \sum_{P\in \mathrm{RPD}(w)}{x^{\mathrm{wt}(P)}} \quad \mbox{and} \quad \mathfrak{G}_w = \sum_{P\in \mathrm{PD}(w)}{(-1)^{\#\mathrm{crosses}(P)-\ell(w)}x^{\mathrm{wt}(P)}} \]
		where $\mathrm{wt}(P)_i=\mbox{\# crosses in row i of }P$.
	\end{theorem}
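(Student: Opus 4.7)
The plan is to induct downward on the weak Bruhat order, matching the recursive definitions of $\mathfrak{S}_w$ and $\mathfrak{G}_w$. For the base case $w = w_0$, the triangular constraint forces the only pipe dream to fill the strict upper triangle with crosses; this gives weight $(n-1,n-2,\ldots,1,0)$ and yields $x_1^{n-1}x_2^{n-2}\cdots x_{n-1}$ with positive sign and with crossing count equal to $\ell(w_0)$, so both right-hand sides reduce to $x_1^{n-1}x_2^{n-2}\cdots x_{n-1}$.

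For the inductive step, fix an ascent $j$ of $w$ and assume the formulas hold for $ws_j$. For the Schubert claim, I would show $\partial_j\!\left(\sum_{P\in\mathrm{RPD}(ws_j)} x^{\mathrm{wt}(P)}\right) = \sum_{Q\in\mathrm{RPD}(w)} x^{\mathrm{wt}(Q)}$ by partitioning $\mathrm{RPD}(ws_j)$ into equivalence classes under \emph{chute moves} localized to rows $j$ and $j+1$: the tiles outside these two rows are fixed within a class, and the local configurations contribute a ladder of monomials of the form $\sum_{c} x_j^{a-c}\, x_{j+1}^{b+c}$. Applying $\partial_j$ telescopes each ladder to a single monomial, and one checks that the resulting pipe dream belongs to $\mathrm{RPD}(w)$ and that every element of $\mathrm{RPD}(w)$ arises exactly once. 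The key input is verifying that the ladder classes are in bijection with $\mathrm{RPD}(w)$, which is essentially a local two-row analysis combined with a strand-tracking argument to see that the pipe at position $j$ swaps to produce $w$ from $ws_j$.

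For the Grothendieck claim the same skeleton applies, but now with the full set $\mathrm{PD}(w)$ and the sign $(-1)^{\#\mathrm{crosses}(P)-\ell(w)}$. Unpacking $\overline{\partial}_j(f) = \partial_j((1-x_{j+1})f)$, the factor $(1-x_{j+1})$ encodes the combinatorial choice of either leaving a pipe dream alone or inserting an extra cross in row $j+1$ at the cost of a sign, which is precisely the mechanism by which non-reduced pipe dreams enter: inserting a cross at an elbow where two strands already cross produces a second crossing, and the sign $-1$ from $(1-x_{j+1})$ matches the change in parity of $\#\mathrm{crosses}(P)-\ell(w)$. The main obstacle is setting up a sign-preserving bijection between the ``decorated'' pipe dreams on the $ws_j$ side (pipe dreams together with an optional marked extra cross in row $j+1$) and $\mathrm{PD}(w)$ after applying $\partial_j$. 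I would handle this by again localizing to rows $j$ and $j+1$ and enumerating the finitely many two-row patterns (cross/elbow in each position), checking in each case that the telescoping of $\partial_j$ on the ladder produced by $(1-x_{j+1})\cdot x^{\mathrm{wt}(P)}$ matches, with the correct sign, the contribution of the corresponding pipe dream of $w$ obtained by swapping strands at position $j$.
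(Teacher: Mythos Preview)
The paper does not give its own proof of this theorem; it is stated as a cited background result from \cite{FKschub,laddermoves,FKgroth} with no argument supplied. So there is nothing in the paper to compare your proposal against.

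That said, your downward-induction strategy is the standard one in the cited literature, and the Schubert half is essentially the Bergeron--Billey/Fomin--Kirillov argument. One caution on the Grothendieck half: the passage ``the factor $(1-x_{j+1})$ encodes the combinatorial choice of either leaving a pipe dream alone or inserting an extra cross in row $j+1$'' is where the real work hides. You cannot simply insert a cross at an arbitrary elbow in row $j+1$ and stay inside $\mathrm{PD}(ws_j)$; the insertion must be coupled with the subsequent application of $\partial_j$ so that the resulting tiling is a pipe dream for $w$ (not $ws_j$), and you must check that every non-reduced pipe dream of $w$ arises exactly once with the correct sign. Your sketch asserts the existence of a sign-preserving bijection after a two-row local analysis, but does not exhibit it; in the actual proofs (e.g.\ Fomin--Kirillov's nil-Hecke/Yang--Baxter approach, or the transition/insertion arguments underlying \cite{FKgroth}) this bijection is the substantive content and requires more than a finite case-check, since the two-row patterns interact across multiple columns. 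As written, your Grothendieck argument is a plausible outline but not yet a proof.
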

	
	\begin{example}
		The pipe dreams of $w=1432$ are shown in Figure \ref{fig:pipe-dreams-1432}.
	\end{example}

	\begin{figure}[ht]
		\begin{center}
			\includegraphics{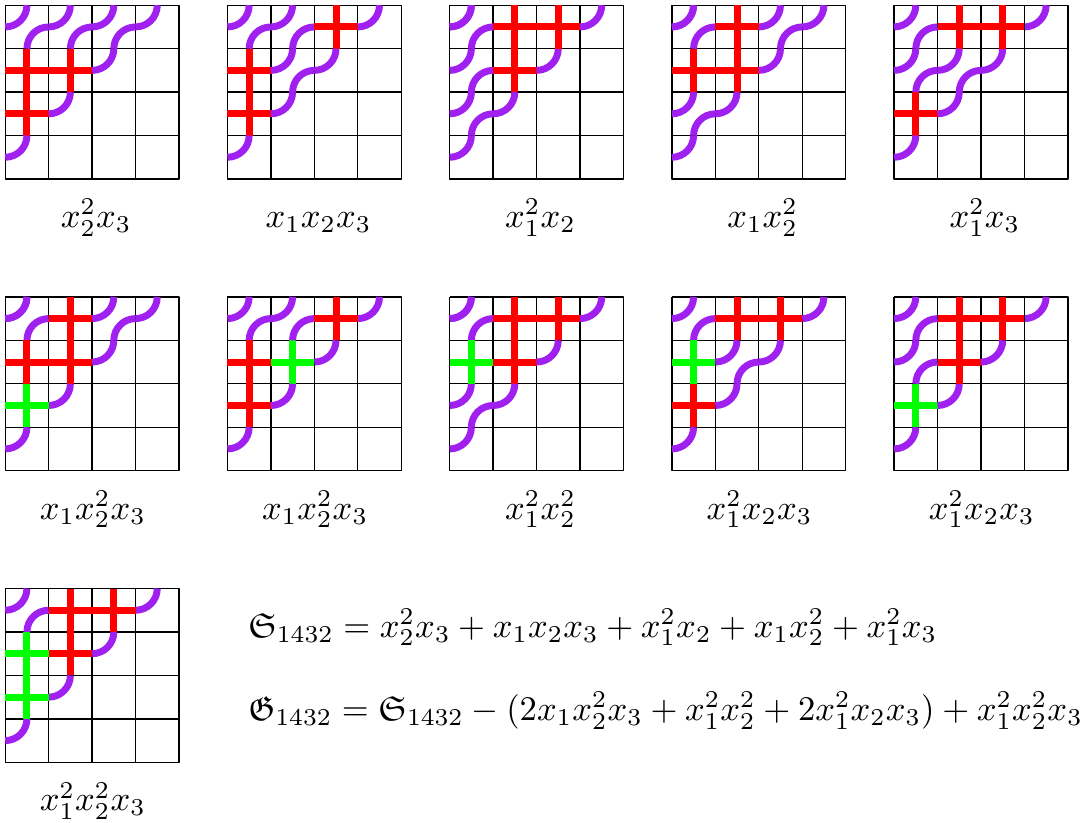}
		\end{center}
		\caption{The set $\mathrm{PD}(1432)$. Second crossing of strands are shown in green.}
		\label{fig:pipe-dreams-1432}
	\end{figure}
	
	Pipe dreams also carry additional geometric structure, which we utilize in Section \ref{sec:mobius}.
	\begin{theorem}[\cite{KM}]
		\label{thm:pdcomplex}
		For any $w\in S_n$, the reduced pipe dreams $\mathrm{RPD}(w)$ equal the set of facets of a pure simplicial complex $\Delta_w$ of dimension $\binom{n}{2}-\ell(w)-1$. The interior faces of $\Delta_w$ are exactly $\mathrm{PD}(w)$. The boundary of $\Delta_w$ is the union of all complexes $\Delta_v$ where $v\geq w$ (in strong Bruhat order).
	\end{theorem}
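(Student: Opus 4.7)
The plan is to realize $\Delta_w$ as a \emph{subword complex} in the sense of Knutson--Miller. First, I would fix the triangular reading word $Q$ obtained by listing the squares of the staircase $\{(i,j):i+j\le n\}$ in a standard order (say row by row, left to right), labeling the square $(i,j)$ by the simple reflection $s_{i+j-1}$. A pipe dream $P$ is specified by its subset $C\subseteq Q$ of cross positions, and tracing strands shows that the permutation associated to $P$ is obtained by taking the \emph{Demazure product} of the letters of $C$ in the $Q$-order. Under this dictionary, reduced pipe dreams for $w$ correspond bijectively to reduced subwords of $Q$ for $w$, while arbitrary pipe dreams for $w$ correspond to subwords whose Demazure product equals $w$ (with reducedness amounting to the ordinary product agreeing with the Demazure product).

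Next I would define $\Delta_w$ to be the simplicial complex on vertex set $Q$ whose faces are subsets $F\subseteq Q$ such that $Q\setminus F$ contains a reduced subword for $w$. By complementation, the facets of $\Delta_w$ biject with $\mathrm{RPD}(w)$. Since every reduced subword for $w$ uses exactly $\ell(w)$ letters, each facet has $\binom{n}{2}-\ell(w)$ vertices, giving dimension $\binom{n}{2}-\ell(w)-1$; purity is immediate because any face can be extended by deleting letters from $Q\setminus F$ until only a reduced subword for $w$ remains.

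For the interior/boundary description, the central claim is that a face $F$ lies in the relative interior of $\Delta_w$ if and only if the Demazure product of $Q\setminus F$ equals $w$, equivalently, if and only if $Q\setminus F$ corresponds to a possibly non-reduced pipe dream for $w$. Granting this, the interior faces of $\Delta_w$ are exactly the complements of elements of $\mathrm{PD}(w)$. Any boundary face $F$ has the property that the Demazure product of $Q\setminus F$ is some $v>w$ in Bruhat order, and then $F$ lies in $\Delta_v$; conversely, every face of $\Delta_v$ for $v>w$ arises this way. This yields $\partial\Delta_w = \bigcup_{v>w}\Delta_v$.

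The hard part is the interior/boundary dichotomy, which amounts to proving that every codimension-$1$ face of $\Delta_w$ belongs to exactly two facets when its complement's Demazure product equals $w$, and to exactly one facet otherwise. This is the strong exchange property for subword complexes; the cleanest route I would pursue is to exhibit an explicit shelling of $\Delta_w$ (for instance, a lexicographic shelling on reduced subwords, ordered by position in $Q$) that simultaneously yields purity, the interior/boundary description, and the fact that $\Delta_w$ is a topological ball or sphere. Once shellability and the Demazure-product identification are in hand, all three conclusions of the theorem follow as a package.
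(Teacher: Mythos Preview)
The paper does not supply its own proof of this theorem; it is quoted verbatim as a result of Knutson--Miller with the citation \cite{KM} and no argument. Your proposal is essentially a sketch of the Knutson--Miller subword complex machinery that constitutes the original proof, so there is nothing in the paper to compare against beyond the attribution itself.
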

	
	\begin{theorem}[{\cite[Corollary 3.8]{KM}}]
		\label{thm:ball}
		The simplicial complex $\Delta_w$ of $w\in S_n$ is a ball whenever $w\neq w_0$, and $\Delta_{w_0}$ is empty.
	\end{theorem}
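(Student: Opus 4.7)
The plan is to proceed by downward induction on $\ell(w)$. The base case $w = w_0$ is immediate from Theorem \ref{thm:pdcomplex}: the dimension formula gives $\dim \Delta_{w_0} = \binom{n}{2} - \ell(w_0) - 1 = -1$, so $\Delta_{w_0}$ has no faces and is empty.

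For the inductive step with $w \neq w_0$, I would show that $\Delta_w$ is a shellable pseudomanifold whose boundary is a sphere, from which it follows that $\Delta_w$ is a ball. The first key step is to construct an explicit shelling order on $\mathrm{RPD}(w)$. A natural candidate is the lexicographic order on the sets of cross positions (viewed as subsets of the strict upper-triangular part of the $n\times n$ grid). Verifying that this is a shelling amounts to showing that each reduced pipe dream $P$ attaches to the union of the earlier facets along a pure codimension-one subcomplex determined by the ``deletable'' crosses of $P$; this can be analyzed locally using chute/ladder moves on pipe dreams.

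The second key step is the pseudomanifold condition: every codimension-one face of $\Delta_w$ lies in at most two facets, and in exactly one facet when it belongs to the boundary. This reflects the local combinatorics of chute moves — erasing a cross from a reduced pipe dream of $w$ either extends to exactly one other reduced pipe dream of $w$ (via the unique legal chute move at that position), or produces a face contained in $\Delta_v$ for some $v \geq w$. Combined with the description $\partial \Delta_w = \bigcup_{v \geq w} \Delta_v$ from Theorem \ref{thm:pdcomplex}, together with the inductive hypothesis that each $\Delta_v$ appearing in this union is itself a ball, we conclude that $\partial \Delta_w$ is a sphere assembled by gluing balls along their boundaries in the pattern dictated by strong Bruhat order.

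The main obstacle is verifying the shelling and pseudomanifold properties rigorously, as both require careful combinatorial bookkeeping on pipe dream moves to identify precisely which crosses in a given $P \in \mathrm{RPD}(w)$ are deletable, and to confirm that each deletion either admits a unique chute partner or falls into $\Delta_v$ for a well-defined $v \gtrdot w$. An alternative route that sidesteps the explicit combinatorics would be to invoke the Gr\"obner degeneration of the matrix Schubert variety $\overline{X_w}$ to the Stanley--Reisner scheme of $\Delta_w$: irreducibility and Cohen--Macaulayness of $\overline{X_w}$ transfer to the initial ideal, and an irreducible Cohen--Macaulay Stanley--Reisner scheme with the boundary described in Theorem \ref{thm:pdcomplex} is a ball. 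Either way, the crux is supplying structure beyond what Theorem \ref{thm:pdcomplex} itself provides.
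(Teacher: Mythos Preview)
This theorem is not proved in the paper; it is quoted verbatim from Knutson--Miller \cite{KM} as background, with no argument supplied here. There is therefore nothing in the present paper to compare your proposal against.

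That said, your sketch has a structural issue worth flagging. You propose to prove $\Delta_w$ is a ball by first establishing that $\partial\Delta_w$ is a sphere via the inductive hypothesis that each $\Delta_v$ with $v>w$ is a ball. But knowing that a union of balls indexed by a Bruhat interval is a sphere is not automatic: gluing balls along their boundaries in some poset pattern need not produce a sphere without significant additional work. The actual Knutson--Miller argument runs in the opposite direction: one shows (via vertex-decomposability, which is stronger than the lex shelling you suggest) that $\Delta_w$ is a shellable pseudomanifold, and then invokes the standard fact that a shellable pseudomanifold is either a sphere or a ball---a ball precisely when the boundary is nonempty. The sphere structure on $\partial\Delta_w$ is then a \emph{consequence}, not an input. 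Your alternative Gr\"obner route is closer in spirit to \cite{KM}, but the implication ``Cohen--Macaulay Stanley--Reisner scheme with the given boundary is a ball'' is not a standard statement and would itself require the shellability/vertex-decomposability machinery to justify.
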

	
	\begin{example}
		\label{exp:pdcomplex}
		When $w=1432$, $\Delta_w$ is shown in Figure \ref{fig:pdcomplex1432} (with faces labeled by their pipe dream).
	\end{example}
	
	\begin{figure}[ht]
		\begin{center}
			\includegraphics[scale=.52]{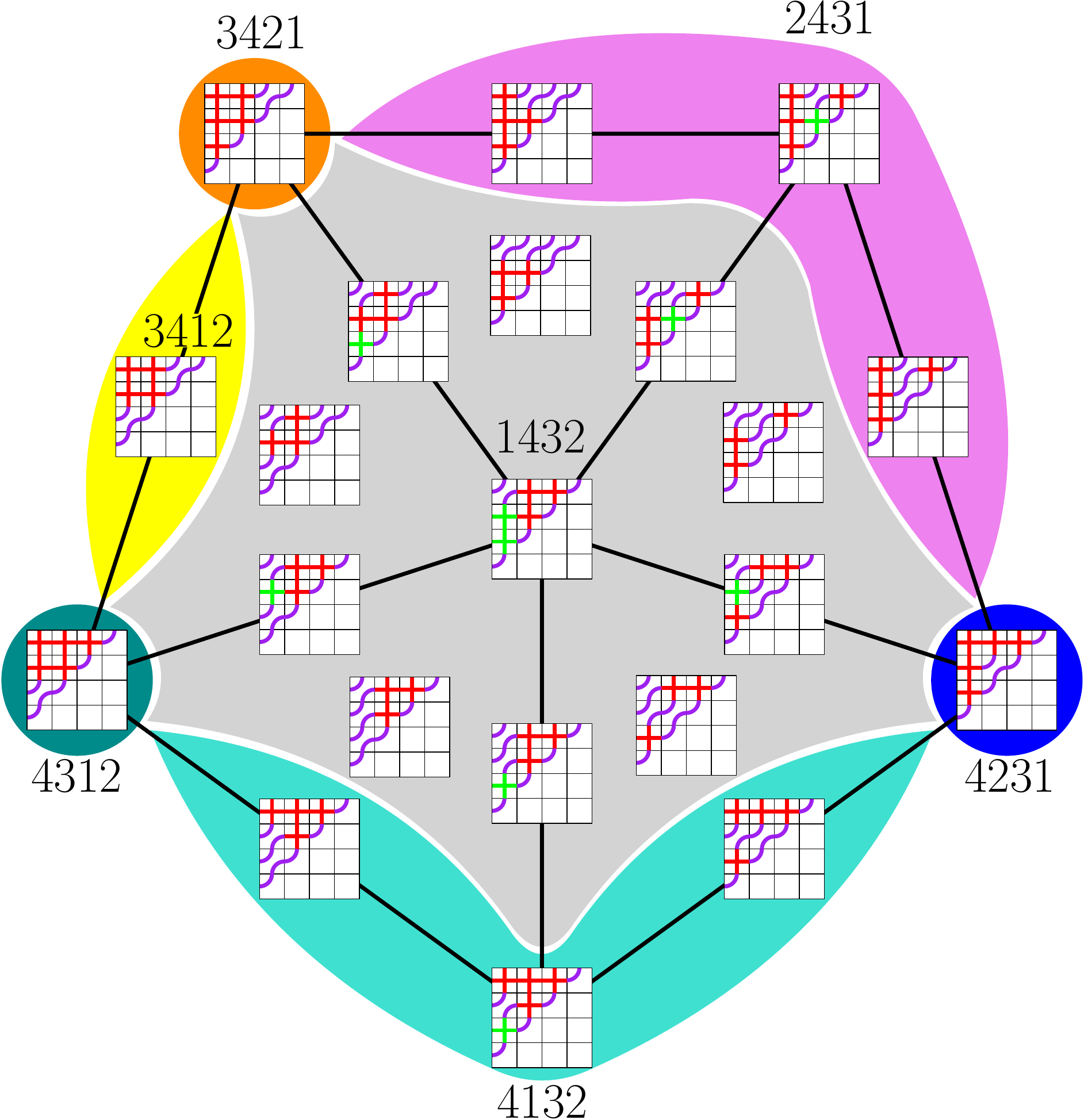}
			\caption{The simplicial complex $\Delta_{1432}$. The colored regions indicate pipe dreams corresponding to a particular permutation.}
			\label{fig:pdcomplex1432}
		\end{center}
	\end{figure}
	
	\subsection{Support and Newton Polytopes of Polynomials}
	
%	For $\alpha,\beta\in \mathbb{Z}^n$, we write $\alpha\leq \beta$ to mean $\alpha_i\leq \beta_i$ for all $i$. 
	The \emph{support} of a polynomial $f=\sum_{\alpha\in \mathbb{Z}^n}c_\alpha x^{\alpha} \in \mathbb{R}[x_1,\ldots,x_n]$ is the set
	\[\supp(f)=\{\alpha \mid c_{\alpha} \neq 0\}\subset \mathbb{Z}^n. \]
	The \emph{Newton polytope} of $f$, denoted $\mathrm{Newton}(f)$, is the convex hull of $\supp(f)$. When \[\mathrm{Newton}(f)\cap \mathbb{Z}^n = \supp(f),\] 
	$f$ is said to have \emph{saturated Newton polytope}, abbreviated SNP.
	
	\subsection{Generalized Permutahedra and Generalized Polymatroids}
	
	A function $z:2^{[n]}\to \mathbb{R}$ is called \emph{submodular} if 
	\[z(I)+z(J)\geq z(I\cup J)+z(I\cap J) \mbox{ for all } I,J\subseteq [n]. \]
	Similarly, $z$ is called \emph{supermodular} if $-z$ is submodular.
	\begin{definition}
		A polytope $P\subset \mathbb{R}^n$ is a \emph{generalized permutahedron} if there is a submodular function $z$ such that 
		\[P=\left\{t\in \mathbb{R}^n \ \middle|\ \sum_{i\in I} t_i\leq z(I) \mbox{ for all } I\subseteq [n] \mbox{ and } \sum_{i=1}^n t_i= z([n]) \right\}. \]
	\end{definition}
	
	\begin{definition}
		A pair $(y,z)$ of functions $2^{[n]}\to \mathbb{R}$ is called a \emph{paramodular pair} if $y$ is supermodular, $z$ is submodular, and
		\[z(I)-y(J)\geq z(I\setminus J)-y(J\setminus I) \mbox{ for all } I,J\subseteq [n]. \]
	\end{definition}
	
	\begin{definition}
		\label{def:gpolymatroid}
		A polytope $Q\subset \mathbb{R}^n$ is called a \emph{generalized polymatroid} if there is a paramodular pair $(y,z)$ such that 
		\[Q=\left\{t\in \mathbb{R}^n \ \middle| \ y(I)\leq \sum_{i\in I} t_i\leq z(I) \mbox{ for all } I\subseteq [n]\right\}. \]
	\end{definition}

	Generalized permutahedra are special cases of generalized polymatroids.
	
	\begin{lemma}[{\cite[Theorem 14.2.8]{frank}}]
		\label{lem:recovery}
		Let $Q\subseteq \mathbb{R}^n$ be a generalized polymatroid with paramodular pair $(y,z)$. Then $y$ and $z$ are uniquely determined from $Q$ as
		\[y_I = \min\left\{\sum_{i\in I} q_{i} \ \middle| \ q\in Q\right\} \quad \mbox{and}\quad z_I = \max\left\{\sum_{i\in I} q_{i} \ \middle| \ q\in Q\right\}. \]
	\end{lemma}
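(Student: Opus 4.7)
The plan splits into an easy direction and a harder one. From the defining inequalities of $Q$, every $q \in Q$ satisfies $y(I) \leq \sum_{i \in I} q_i \leq z(I)$, so immediately $\min_{q \in Q} \sum_{i \in I} q_i \geq y(I)$ and $\max_{q \in Q} \sum_{i \in I} q_i \leq z(I)$. The content of the lemma is that these bounds are attained for every $I \subseteq [n]$.

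To produce a maximizer for a fixed $I$, I would use a greedy coordinate-by-coordinate construction (the minimizer is symmetric). Reorder $[n]$ so that $I = \{1, 2, \ldots, k\}$ precedes $[n] \setminus I$. Set $q^*_1 = z(\{1\})$ and inductively $q^*_j = z(\{1, \ldots, j\}) - \sum_{l < j} q^*_l$ for $j = 2, \ldots, k$; by telescoping $\sum_{i \in I} q^*_i = z(I)$. Extend to $j = k+1, \ldots, n$ by choosing $q^*_j$ as small as possible consistent with the $y$-lower-bound constraints on subsets of $\{1, \ldots, j\}$ containing $j$.

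The main obstacle is showing that the resulting $q^*$ actually lies in $Q$, i.e.\ satisfies the paramodular inequalities for every $K \subseteq [n]$, not merely those used during the construction. This is where paramodularity must be used in full: combining the tight constraints from the construction with the cross-inequality $z(A) - y(B) \geq z(A \setminus B) - y(B \setminus A)$, together with submodularity of $z$ and supermodularity of $y$, allows one to inductively bound $\sum_{k \in K} q^*_k$ between $y(K)$ and $z(K)$; a natural induction parameter is $|K \triangle \{1, \ldots, j\}|$ for the prefix $\{1, \ldots, j\}$ closest to $K$. A slicker but non-constructive alternative goes through linear programming duality: one shows that the paramodular constraint matrix is totally dual integral, so the trivial dual solution placing unit weight on the single constraint indexed by $I$ certifies the bound $\max_{q \in Q} \sum_{i \in I} q_i = z(I)$ via complementary slackness with $q^*$. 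Either way, the technical core is precisely the cross-inequality appearing in the definition of a paramodular pair, and the full details are carried out in \cite[Theorem 14.2.8]{frank}.
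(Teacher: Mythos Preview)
The paper does not prove this lemma at all: it is stated with a citation to \cite[Theorem~14.2.8]{frank} and used as a black box. So there is no ``paper's own proof'' to compare against; your proposal is a sketch of an argument the paper simply imports.

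As a sketch, your outline is standard and essentially correct: the trivial direction follows from the defining inequalities, and attainment is achieved by a greedy construction whose feasibility is verified using sub/supermodularity together with the cross-inequality. One point to tighten: in your extension to coordinates $j>k$ you set $q^*_j$ ``as small as possible consistent with the $y$-lower-bound constraints on subsets of $\{1,\ldots,j\}$ containing $j$''; this is an infimum over infinitely many constraints and you should say why it is finite and why taking the infimum over only prefix-type constraints suffices (or, more cleanly, define $q^*_j = y(\{1,\ldots,j\}) - y(\{1,\ldots,j-1\})$ and then verify all constraints at once). The LP-duality alternative you mention is also fine but, as you note, still requires the paramodular cross-inequality to establish total dual integrality. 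Since the paper treats this as a cited fact, any of these routes would be acceptable background, and your closing remark that the full details are in \cite[Theorem~14.2.8]{frank} is exactly what the paper itself does.
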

	
	\begin{lemma}[{\cite[Theorem 14.2.10]{frank}}]
		\label{lem:integralgpoly}
		If $Q$ is a generalized polymatroid defined by an integral paramodular pair $(y,z)$, then $Q$ is an integral polyhedron. Furthermore, there are always integral optimizers for 
		\[
		\min\left\{\sum_{i\in I} q_{i} \ \middle| \ q\in Q\right\} \quad \mbox{and}\quad \max\left\{\sum_{i\in I} q_{i} \ \middle| \ q\in Q\right\}. 
		\]
	\end{lemma}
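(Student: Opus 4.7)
The plan is to deduce both claims from the same underlying integrality statement: first show that the vertices of $Q$ are integral, and then the integrality of the optimizers will follow because a linear functional on a polytope attains its extrema at a vertex.

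To establish that $Q$ is an integral polyhedron, my approach would be to show that the defining inequality system $y(I)\leq \sum_{i\in I}t_i \leq z(I)$, indexed by $I\subseteq [n]$, is totally dual integral (TDI). By the Edmonds--Giles theorem, a rational polyhedron defined by a TDI system with integral right-hand sides has integral vertices, which is exactly what we need. So the core task is to verify total dual integrality for a paramodular system.

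For the TDI verification, I would fix an integral objective vector $c\in\mathbb{Z}^n$ and consider the LP dual for $\max\{c^\top t : t\in Q\}$: assign dual multipliers $\lambda_I \geq 0$ to upper bound constraints $\sum_{i\in I} t_i \leq z(I)$ and $\mu_J \geq 0$ to lower bound constraints $\sum_{i\in J} t_i \geq y(J)$. The dual is $\min \sum_I \lambda_I z(I) - \sum_J \mu_J y(J)$ subject to the obvious equality constraints. The key step is an uncrossing argument: among all optimal dual solutions, pick one minimizing $\sum_I \lambda_I |I|^2 + \sum_J \mu_J |J|^2$ (or a similar quadratic potential); then use submodularity of $z$ to replace any pair of crossing sets $I, I'$ with positive $\lambda$-weight by $I\cup I'$ and $I\cap I'$, supermodularity of $y$ to do the analogous replacement for pairs of $\mu$-supported sets, and crucially the paramodular inequality $z(I)-y(J)\geq z(I\setminus J)-y(J\setminus I)$ to handle crossings between a $\lambda$-supported $I$ and a $\mu$-supported $J$. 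Each such exchange preserves feasibility and weakly decreases the objective, so optimality forces the combined support to be a cross-free (laminar) family. The incidence matrix of a laminar family on $[n]$ is totally unimodular, so an integral $c$ forces an integral optimal dual solution, establishing TDI.

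The main obstacle is the third type of uncrossing, between an upper-bound set $I$ with $\lambda_I>0$ and a lower-bound set $J$ with $\mu_J>0$: neither sub- nor supermodularity alone suffices, and one must carefully verify using the paramodular inequality that replacing them by $I\setminus J$ and $J\setminus I$ (with appropriately updated multipliers) remains dual feasible and weakly improves the objective. Once TDI is proven, Edmonds--Giles gives integrality of $Q$, and for the second assertion I simply note that the linear function $t\mapsto \sum_{i\in I} t_i$ is integer-valued and attains its min and max over the nonempty polytope $Q$ at vertices, which are integral by the first part.
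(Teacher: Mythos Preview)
The paper does not prove this lemma; it is simply cited from Frank's book \cite[Theorem 14.2.10]{frank} as background. So there is no proof in the paper to compare against.

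That said, your outline is the standard route to this result and is essentially what one finds in Frank's treatment: establish total dual integrality of the paramodular system via an uncrossing argument (submodularity of $z$ for $\lambda$--$\lambda$ crossings, supermodularity of $y$ for $\mu$--$\mu$ crossings, and the paramodular inequality for mixed $\lambda$--$\mu$ crossings), conclude that the optimal dual support is cross-free and hence has a totally unimodular incidence matrix, and then invoke Edmonds--Giles. One small point worth tightening: you call $Q$ a polytope when deducing that the linear functional $\sum_{i\in I} t_i$ attains its extrema at a vertex. In the paper's Definition~\ref{def:gpolymatroid} a generalized polymatroid is explicitly declared to be a polytope, so boundedness is given; but if you are arguing from scratch you should note that the singleton constraints $y(\{i\})\le t_i\le z(\{i\})$ force boundedness. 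With that caveat, your plan is correct and matches the literature proof that the paper is citing.
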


	The following proposition is immediate from \cite[Theorem 1]{recognizing}.
	\begin{proposition}
		\label{prop:mink-sum}
		If $Q,Q'\subset \mathbb{R}^n$ are generalized polymatroids, then so is $Q+Q'$.
	\end{proposition}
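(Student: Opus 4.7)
The plan is to invoke an edge-direction characterization of generalized polymatroids, which is the content of the cited Theorem 1 of \cite{recognizing}: a polytope $P \subset \mathbb{R}^n$ is a generalized polymatroid if and only if every edge of $P$ is parallel to a vector of the form $e_i$ or $e_i - e_j$ for some $i,j \in [n]$. Granting this characterization, the proof reduces to a general fact about Minkowski sums of polytopes.

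First I would recall the standard observation that the edges of a Minkowski sum $Q + Q'$ are Minkowski sums of a face of $Q$ and a face of $Q'$ whose combined dimension is $1$, and hence every edge direction of $Q + Q'$ is parallel to an edge direction of $Q$ or of $Q'$. This is a routine consequence of the description of faces of $Q+Q'$ via the common refinement of the normal fans of $Q$ and $Q'$.

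Next, by Theorem 1 of \cite{recognizing} applied to each of $Q$ and $Q'$, every edge of $Q$ and every edge of $Q'$ is parallel to some $e_i$ or $e_i - e_j$. Combining this with the Minkowski sum observation, every edge of $Q + Q'$ is parallel to some $e_i$ or $e_i - e_j$. Applying the reverse direction of the characterization then shows that $Q + Q'$ is itself a generalized polymatroid.

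The only real obstacle is pinning down the precise form of the cited characterization and verifying the Minkowski-sum edge fact; both are standard, and no modular or paramodular computations need to be performed directly. In particular, one avoids having to exhibit a paramodular pair $(y'',z'')$ for $Q + Q'$ explicitly, although Lemma \ref{lem:recovery} would let us recover one as $y'' = y + y'$ and $z'' = z + z'$ if desired.
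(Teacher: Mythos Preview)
Your proposal is correct and matches the paper's approach: the paper gives no argument beyond the sentence ``immediate from \cite[Theorem 1]{recognizing}'', and you have simply unpacked that citation by stating the edge-direction characterization and the standard fact that edge directions of a Minkowski sum are inherited from the summands. This is exactly the intended one-line deduction.
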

	
	\subsection{Diagrams}
	
	By a \emph{diagram}, we mean a sequence 
	\[D=(D_1,D_2,\ldots,D_n)\]
	of finite subsets of $[n]$, called the \emph{columns} of $D$. We interchangeably think of $D$ as a collection of boxes $(i,j)$ in a grid, viewing an element $i\in D_j$ as a box in row $i$ and column $j$ of the grid. When we draw diagrams, we read the indices as in a matrix: $i$ increases top-to-bottom and $j$ increases left-to-right. Associated to any permutation $w\in S_m$ is the \emph{Rothe diagram} $D(w)$, defined by
	\[ D(w)=\{(i,j)\in [n]\times [n] \mid i<w^{-1}(j)\mbox{ and } j<w(i) \}. \]
	
	For $R,S\subseteq [n]$, we write $R\preccurlyeq S$ if $\#R=\#S$ and the $k$\/th least element of $R$ does not exceed the $k$\/th least element of $S$ for each $k$. For any diagrams $C=(C_1,\ldots, C_n)$ and $D=(D_1,\ldots, D_n)$, we say $C\preccurlyeq D$ if $C_j\preccurlyeq D_j$ for all $j\in[n]$. 
	
	\subsection{Matroids and Polytopes}
	
	A \emph{matroid} $M$ is a pair $(E, \B)$ consisting of a finite set $E$ and a nonempty collection of subsets $\B$ of $E$, 
	called the \emph{bases} of $M$. $\B$ is required to satisfy the \emph{basis exchange axiom}: 
	If $B_1, B_2 \in \B$ and $b_1 \in B_1- B_2$, then there exists $b_2 \in B_2 - B_1$ such that $B_1 - b_1 \cup b_2 \in \B$. By choosing a labeling of the elements of $E$, we always assume $E=[n]$ for some $n$.
	
	Given a matroid $M=(E,\mathcal{B})$ with $E=[n]$ and a basis $B\in\mathcal{B}$, let $\zeta^B$ be the indicator vector of $B$. That is, let $\zeta^B=(\zeta_1^B,\ldots, \zeta_n^B)\in\mathbb{R}^n$ with $\zeta_i^B=1$ if $i\in B$ and $\zeta_i^B=0$ if $i\notin B$ for each $i$. The \emph{matroid polytope} of $M$ is the polytope 
	\[P(M)=\mathrm{Conv}\{\zeta^B:\,B\in\mathcal{B} \}.\]
	
	Any set $S\subseteq [n]$ with $S\supseteq B$ for $B\in\mathcal{B}$ is called a \emph{spanning set} of $M$.
	The \emph{spanning set polytope} $P_{\mathrm{sp}}(M)$ is the polytope
	\[P_\mathrm{sp}(M)=\mathrm{Conv}\{\zeta^S\ \mid \ S\subseteq [n] \mbox{ is a spanning set of } M \}.\]
	
	The \emph{rank function} of $M$ is the function
	\[r:2^{E}\to \mathbb{Z}_{\geq 0}\] defined by $r(S)=\max\{\#(S\cap B):\, B\in \mathcal{B} \}$. The sets $S\cap B$ where $S\subseteq [n]$ and $B\in\mathcal{B}$ are called the \emph{independent sets} of $M$. 
	
	The following result is well-known, see for instance \cite{frank, Schrijver}.
	\begin{proposition}
		\label{prop:matroidgps}
		For any matroid $M$ on ground set $[n]$, $P(M)$ is a generalized permutahedron and $P_{\mathrm{sp}}(M)$ is a generalized polymatroid.
	\end{proposition}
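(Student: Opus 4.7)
The plan is to handle the two claims separately, in both cases using the submodular rank function of the matroid (and its dual) as the defining data.

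For the matroid polytope $P(M)$, I would invoke the classical description due to Edmonds:
\[
P(M) = \left\{t\in\mathbb{R}^n_{\geq 0} \ \middle|\ \sum_{i\in I} t_i \leq r(I)\text{ for all }I\subseteq [n],\ \sum_{i=1}^n t_i = r([n])\right\}.
\]
Since the rank function $r$ of any matroid is submodular (a standard basic property, verified by looking at how a basis of the union is built up from bases of the pieces), setting $z=r$ places $P(M)$ directly into the form demanded by the definition of a generalized permutahedron. The nonnegativity constraints $t_j\geq 0$ are not stipulated by that definition, but they are implied: combining $\sum_i t_i = r([n])$ with $\sum_{i\neq j} t_i \leq r([n]\setminus\{j\})$ yields $t_j\geq r([n]) - r([n]\setminus\{j\})\geq 0$, so no extra constraints are lost.

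For the spanning set polytope $P_{\mathrm{sp}}(M)$, the cleanest route is through matroid duality. Let $M^\ast$ be the dual matroid with rank function $r^\ast$, so that $S\subseteq [n]$ is a spanning set of $M$ if and only if $[n]\setminus S$ is independent in $M^\ast$. The independence polytope $P_I(M^\ast)$, defined as the convex hull of indicator vectors of independent sets of $M^\ast$, is Edmonds' polymatroid
\[
P_I(M^\ast) = \left\{t\in \mathbb{R}^n_{\geq 0}\ \middle|\ \sum_{i\in I} t_i \leq r^\ast(I)\text{ for all }I\subseteq [n]\right\},
\]
which is a generalized polymatroid (take $y\equiv 0$ on subsets intersecting $[n]$ trivially and $z=r^\ast$). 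The affine involution $\varphi\colon t\mapsto \mathbf{1}-t$ sends $P_I(M^\ast)$ bijectively onto $P_{\mathrm{sp}}(M)$, since $\varphi(\zeta^{[n]\setminus S}) = \zeta^S$. One then checks that $\varphi$ preserves the generalized polymatroid property: if $(y,z)$ is a paramodular pair for $Q$, then $(|I|-z(I),\ |I|-y(I))$ is a paramodular pair for $\varphi(Q)$, with supermodularity/submodularity swapping roles and the paramodular inequality being invariant under this swap.

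The only technical obstacle is the verification that $\varphi$ preserves the paramodular structure (or, if one prefers the direct approach, verifying the paramodular inequality for $z(I)=|I|$ and $y(I) = r([n]) - r([n]\setminus I)$, which reduces to the bound $r(B)-r(A)\leq |B\setminus A|$ for $A\subseteq B$). Both steps are mechanical once one writes out the inequality $z(I)-y(J)\geq z(I\setminus J) - y(J\setminus I)$. Everything else is invocation of well-known matroid facts, so the proof is essentially a pointer to the literature \cite{frank, Schrijver} together with the duality bridge from spanning sets to dual independent sets.
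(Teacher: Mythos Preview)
Your proposal is correct. Note that the paper does not actually prove this proposition: it is stated as well-known with a citation to \cite{frank, Schrijver}, and the paper simply records the resulting parameterizations
\[
P(M)=\left\{t\in\mathbb{R}^n\ \middle|\ \sum_{i\in I}t_i\leq r(I),\ \sum_{i\in E}t_i=r(E)\right\},\qquad
P_{\mathrm{sp}}(M)=\left\{t\in\mathbb{R}^n\ \middle|\ r(E)-r(E\setminus I)\leq\sum_{i\in I}t_i\leq |I|\right\}.
\]
Your argument for $P(M)$ is exactly the content behind the first display. For $P_{\mathrm{sp}}(M)$ you take a detour through matroid duality and the involution $t\mapsto\mathbf{1}-t$; this is a perfectly valid route, and when you unwind it (using $r^\ast(I)=|I|-r(E)+r(E\setminus I)$) the paramodular pair $(|I|-r^\ast(I),\,|I|-0)$ you produce is precisely the pair $(r(E)-r(E\setminus I),\,|I|)$ that the paper writes down. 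So your duality argument and the ``direct approach'' you mention at the end land on the same description the paper states without proof. One cosmetic point: the phrase ``$y\equiv 0$ on subsets intersecting $[n]$ trivially'' is garbled; you just mean $y(I)=0$ for all $I\subseteq[n]$.
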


	As a generalized permutahedron, a matroid polytope is parameterized by the (submodular) rank function $r$ of the underlying matroid:
	\begin{align*}
	P(M)&=\left\{ {t}\in\mathbb{R}^n \ \middle| \ \sum_{i\in I}{t_i}\leq r(I) \mbox{ for } I\subseteq E, \mbox{ and } \sum_{i\in E}{t_i}=r(E) \right\}.
	\end{align*}
	As a generalized polymatroid, a spanning set polytope is parameterized by
	\begin{align*}
	P_{\mathrm{sp}}(M)&=\left\{ {t}\in\mathbb{R}^n \ \middle| \ r(E)-r(E\setminus I) \leq \sum_{i\in I}{t_i}\leq |I| \mbox{ for all } I\subseteq E \right\}.
	\end{align*}
	
	\begin{definition}
		\label{def:schubert-matroids}
		Fix positive integers $1 \leq s_1 < \cdots < s_r \leq n$. The sets $\{a_1, \ldots, a_r\}$ of positive integers with $a_1<\cdots<a_r$ such that $a_1 \leq s_1, \ldots, a_r \leq s_r$ are the bases of a matroid (with ground set $[n]$), 
		called the \emph{Schubert matroid} $\mathrm{SM}_n(s_1, \ldots, s_r)$.
	\end{definition}
	
	\begin{theorem}[{\cite[Theorem 11]{FMS}}]
		\label{thm:fms}
		For any permutation $w\in S_n$ with Rothe diagram $D(w)=(D_1,\ldots, D_n)$,
		\[\mathrm{Newton}(\mathfrak{S}_w)=\sum_{j=1}^{n}P(\mathrm{SM}_n(D_j)). \]
		In particular, each $\alpha\in\supp(\mathfrak{S}_w)$ can be written as a sum
		\[\alpha = \alpha^{(1)}+\cdots+\alpha^{(n)} \]
		where $\alpha^{(j)}$ is the indicator vector of a basis of $\mathrm{SM}_n(D_j)$.
	\end{theorem}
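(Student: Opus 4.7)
The plan is to prove the stated Minkowski sum decomposition of $\mathrm{Newton}(\mathfrak{S}_w)$ by establishing the two inclusions separately, invoking the pipe dream formula (Theorem~\ref{thm:pdformula}) for Schubert polynomials together with the polymatroid machinery summarized in Propositions~\ref{prop:matroidgps} and~\ref{prop:mink-sum} and Lemmas~\ref{lem:recovery}--\ref{lem:integralgpoly}.

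\textbf{Setting up the Minkowski sum.} First I would note that each $P(\mathrm{SM}_n(D_j))$ is a matroid polytope, hence a generalized permutahedron with supporting submodular function the rank function $r_j$ of $\mathrm{SM}_n(D_j)$. By Proposition~\ref{prop:mink-sum}, $\sum_j P(\mathrm{SM}_n(D_j))$ is therefore a generalized polymatroid with defining submodular function $\sum_j r_j$, and since each $r_j$ is integer-valued, Lemma~\ref{lem:integralgpoly} ensures the Minkowski sum is an integral polytope whose lattice points can be enumerated directly from the rank data.

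\textbf{Forward inclusion $\mathrm{Newton}(\mathfrak{S}_w) \subseteq \sum_j P(\mathrm{SM}_n(D_j))$.} By Theorem~\ref{thm:pdformula}, it suffices to show $\mathrm{wt}(P) \in \sum_j P(\mathrm{SM}_n(D_j))$ for every $P \in \mathrm{RPD}(w)$. I would construct, for each reduced pipe dream $P$, a partition of its crosses into groups $C_1, \ldots, C_n$ with $|C_j|=|D_j|$ and such that the multiset of rows appearing in $C_j$ is the indicator of a basis of $\mathrm{SM}_n(D_j)$; equivalently, $\mathrm{row}(C_j) \preccurlyeq D_j$. Since pipe dream columns need not have size $|D_j|$, this is not the literal column decomposition of $P$; rather it is an assignment of crosses to ``virtual columns'' obtained by tracing each cross back to a specific inversion of $w$ via the strand geometry of $P$ (in the spirit of the Bergeron--Billey correspondence). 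For the ``bottom'' pipe dream $P_0$ whose crosses are exactly $D(w)$, this assignment is the identity, and $\mathrm{row}(C_j) = D_j$ is automatically the top basis of $\mathrm{SM}_n(D_j)$. I would then induct on the number of ladder moves from $P_0$ to $P$, verifying that each ladder move updates the assignment by a single basis exchange that preserves the $\preccurlyeq D_j$ condition.

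\textbf{Reverse inclusion $\sum_j P(\mathrm{SM}_n(D_j)) \subseteq \mathrm{Newton}(\mathfrak{S}_w)$.} Because the Minkowski sum is an integral polytope, it suffices to realize every lattice point $\alpha = \sum_j \mathbf{1}_{B_j}$, with each $B_j$ a basis of $\mathrm{SM}_n(D_j)$, as the weight of some reduced pipe dream. I would produce such a pipe dream by inverting the forward construction: starting from $P_0$ with its canonical basis tuple $(D_1, \ldots, D_n)$, I would apply a sequence of ladder moves that step-by-step transforms the tuple into $(B_1, \ldots, B_n)$, each step being a single basis exchange. The $\preccurlyeq D_j$ condition on each $B_j$ guarantees that such a sequence exists and stays within the class of reduced pipe dreams of $w$.

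\textbf{Main obstacle.} The critical technical content is the bijection (or at least a compatible pair of maps) between $\mathrm{RPD}(w)$ and tuples of bases in $\prod_j \mathcal{B}(\mathrm{SM}_n(D_j))$. Tracking how ladder moves alter the inversion-to-cross assignment is delicate because a ladder move is local to two adjacent columns but its effect on the induced basis tuple can involve nontrivial bookkeeping across the virtual-column grouping. I expect the reverse inclusion to be the main difficulty: given an abstract tuple of bases, one must produce a reduced pipe dream satisfying global constraints (strand termination in the order $w$ and no repeated strand crossings) from purely local basis data, and it is conceivable that not every single-step basis exchange corresponds to a valid ladder move, so the proof may require either a careful choice of exchange sequence or an intermediate combinatorial model that rigidifies the correspondence.
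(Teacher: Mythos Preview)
The paper does not prove this theorem; it is quoted from \cite{FMS} as a background result and is used as a black box (for instance in the proof of Theorem~\ref{thm:superset}). So there is no ``paper's own proof'' to compare against here.

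For context, the argument in \cite{FMS} does not proceed via pipe dreams at all. It instead passes through Magyar's realization of $\mathfrak{S}_w$ as the dual character of a flagged Weyl module, which immediately yields a description of $\supp(\mathfrak{S}_w)$ in terms of diagrams $C\preccurlyeq D(w)$, and then invokes a theorem of Rado on integer points of Minkowski sums of matroid polytopes to identify the Newton polytope with $\sum_j P(\mathrm{SM}_n(D_j))$. Your pipe-dream/ladder-move route is therefore genuinely different and more combinatorial in flavor.

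That said, your proposal is a sketch with a real gap rather than a proof. The forward inclusion hinges on an ``inversion-to-cross'' assignment that must stay compatible with the $\preccurlyeq D_j$ condition through arbitrary ladder moves; you assert this can be checked inductively but do not carry it out, and the interaction of a ladder move with the virtual-column grouping is exactly the delicate point. For the reverse inclusion you overreach: to get $\sum_j P(\mathrm{SM}_n(D_j))\subseteq \mathrm{Newton}(\mathfrak{S}_w)$ it suffices to realize the \emph{vertices} of the Minkowski sum as weights of reduced pipe dreams, not all lattice points (that stronger claim is the SNP property, a separate theorem). More importantly, your own ``main obstacle'' paragraph correctly diagnoses that a single basis exchange need not correspond to a valid ladder move, so the proposed inductive construction of a pipe dream from an arbitrary basis tuple is not justified. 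Without either a rigidified bijective model or a different argument for the reverse inclusion, the proposal remains a plausible outline rather than a proof.
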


	\begin{example}
		Consider the permutation $w=21543$. Then
		\begin{center}
			\includegraphics[scale=1]{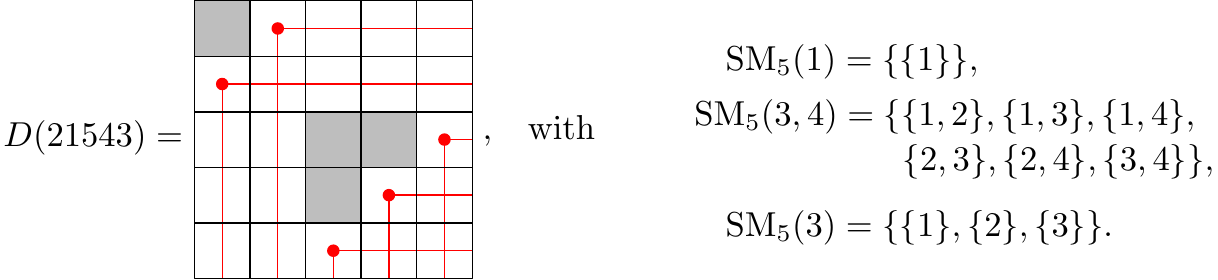}
		\end{center}
	The Minkowski sum decomposition of $\mathrm{Newton}(\mathfrak{S}_{21543})$ is shown in Figure \ref{fig:minkowski-sum}.
	\end{example}

	\begin{figure}[ht]
		\begin{center}
			\includegraphics[scale=.8]{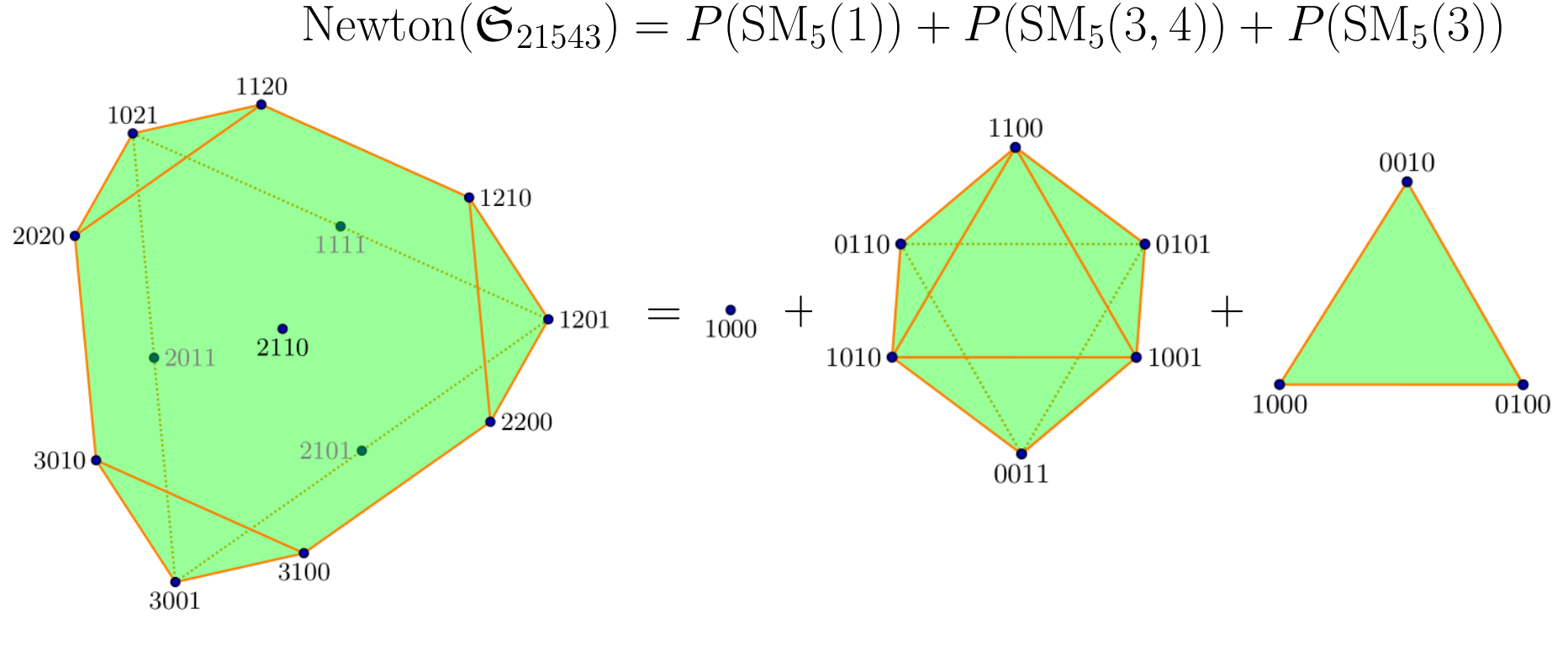}
		\end{center}
	\caption{The Schubert matroid polytope decomposition of $\mathrm{Newton}(\mathfrak{S}_{21543})$.}
	\label{fig:minkowski-sum}
	\end{figure}
	
	\section{Supports of Grothendieck Polynomials}
	\label{sec:support}
	For a permutation $w\in S_n$, recall the support of the Grothendieck polynomial of $w$ is the set $\supp(\mathfrak{G}_w)$ of exponents of terms in $\mathfrak{G}_w$ with nonzero coefficient. We endow $\supp(\mathfrak{G}_w)$ with the following poset structure.
	
	\begin{definition}
		For $\alpha, \beta\in\mathbb{Z}^n$, define the \emph{componentwise comparison} $\leq$ by 
		\[\alpha\leq \beta \mbox{ if } \alpha_i\leq \beta_i \mbox{ for all } i\in [n].\]
	\end{definition}

	We study the subsets $\supp(\mathfrak{G}_w)\subset\mathbb{Z}^n$ with the inherited poset structure.
	
	\begin{example}
		For $w=15324$, we have 
		\begin{center}
			\includegraphics{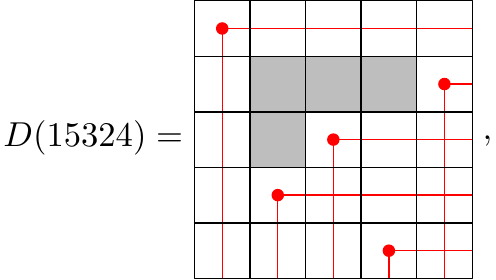}
		\end{center}
		\begin{align*}
		\mathfrak{S}_w &=x_1^3x_2 + x_1^3x_3 + x_1^2x_2^2 + x_1^2 x_2 x_3 + x_1x_2^3 + x_1 x_2^2 x_3 +x_2^3 x_3,\\
		\mathfrak{G}_w &= \mathfrak{S}_w - (x_1^3 x_2^2+2x_1^3x_2 x_3 + x_1^2x_2^3 +2x_1^2x_2^2 x_3 + 2x_1x_2^3x_3) + (x_1^3x_2^2 x_3 + x_1^2 x_2^3 x_3).
		\end{align*}
		The Hasse diagram of $\supp(\mathfrak{G}_w)$ as a poset under componentwise comparison is shown in Figure \ref{fig:poset-example}.
	\end{example}
	
	\begin{figure}[ht]
		\includegraphics[scale=1]{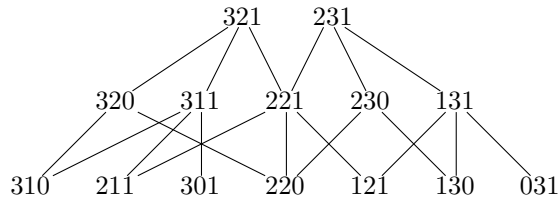}
		\caption{The Hasse diagram of $\supp(\mathfrak{G}_{15324})$ (writing exponents $(a,b,c,0,0)\in\mathbb{Z}^5$ as $abc$).}
		\label{fig:poset-example}
	\end{figure}
	
	We first present two known properties of the posets $\supp(\mathfrak{G}_w)$.
%	For $\alpha,\beta\in\mathbb{Z}^n$, we write $\alpha\leq \beta$ to mean componentwise comparison.
	
	\begin{lemma}
		\label{lem:downwardsdivisibility}
		Fix any permutation $w\in S_n$. For each $\beta\in \supp(\mathfrak{G}_w)$, with $|\beta|>\ell(w)$, there is $\alpha\in\supp(\mathfrak{G}_w)$ with $\alpha\leq\beta$ and $|\alpha|=|\beta|-1$.
	\end{lemma}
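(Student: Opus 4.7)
The plan is to combine a uniform-sign simplification of the pipe dream formula (Theorem~\ref{thm:pdformula}) with a cross-removal move on pipe dreams. For the first step, observe that every $P\in\mathrm{PD}(w)$ with $\mathrm{wt}(P)=\beta$ has exactly $|\beta|$ crosses, so it contributes to $\mathfrak{G}_w$ with the fixed sign $(-1)^{|\beta|-\ell(w)}$. All contributions to the monomial $x^\beta$ therefore share this sign, no cancellation occurs, and
\[
\supp(\mathfrak{G}_w) = \{\mathrm{wt}(P) : P\in \mathrm{PD}(w)\}.
\]

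This reduces the lemma to the combinatorial claim that if $P$ is a non-reduced pipe dream of $w$, then some cross of $P$ can be replaced by an elbow to yield another pipe dream of $w$. Granting this claim, any $P$ realizing a given $\beta\in\supp(\mathfrak{G}_w)$ with $|\beta|>\ell(w)$ is non-reduced, and the resulting $Q$ satisfies $\mathrm{wt}(Q)\leq\beta$ coordinatewise with $|\mathrm{wt}(Q)|=|\beta|-1$, so $\alpha:=\mathrm{wt}(Q)$ meets the conclusion.

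To establish the cross-removal claim I would pick any pair of strands crossing more than once in $P$ and let $c$ be any crossing between them that is not their first in the reading order; such a $c$ exists because $P$ is non-reduced. Since the permutation associated to a pipe dream is computed by already treating non-first crossings between each pair of strands as elbows, replacing $c$ physically by an elbow should leave the associated permutation equal to $w$. Alternatively, one can argue topologically from Theorems~\ref{thm:pdcomplex} and~\ref{thm:ball}: $P$ is an interior face of the simplicial ball $\Delta_w$ of positive codimension $|\beta|-\ell(w)$, so some cofacet of $P$ (obtained by replacing a cross with an elbow) must also lie in the interior of $\Delta_w$ and hence correspond to a pipe dream of $w$.

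The main obstacle is the careful justification of the cross-removal claim in the combinatorial route: removing a physical cross changes the physical routing of strands after $c$, which can alter which pairs of strands cross at subsequent tiles and hence which crossings are first or second between each pair. One must verify that these downstream changes conspire to leave the permutation associated to the modified pipe dream equal to $w$. The topological route avoids this bookkeeping but needs the fact that every interior face of a simplicial ball of positive codimension has at least one interior cofacet, which itself requires some care to justify in the absence of a manifold structure on $\Delta_w$.
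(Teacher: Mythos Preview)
Your approach is the same as the paper's: pick a pipe dream $P$ of weight $\beta$, note it is non-reduced, and delete a second crossing to obtain $P'\in\mathrm{PD}(w)$ with $\mathrm{wt}(P')=\alpha$. The paper's proof is exactly these two sentences; your added observation that all pipe dreams of a fixed weight contribute with the same sign (so $\supp(\mathfrak{G}_w)=\{\mathrm{wt}(P):P\in\mathrm{PD}(w)\}$) is implicit there and is a genuine clarification.

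Your stated ``main obstacle'' is not an obstacle. The strand-following rule in the definition of $\mathrm{PD}(w)$ is precisely the Demazure (0-Hecke) product of the word $q_1\cdots q_m$ of simple transpositions recorded by the crosses in reading order: at each step one multiplies by $q_k$ if length goes up and does nothing if it would go down. A ``second crossing'' is by definition a position $k$ at which the step is absorbed, i.e.\ $\delta(q_1\cdots q_{k-1})=\delta(q_1\cdots q_k)$. Deleting $q_k$ therefore leaves the prefix product unchanged and the suffix computation identical, so the Demazure product of $q_1\cdots\widehat{q_k}\cdots q_m$ is still $w$. Your worry about ``downstream changes'' conflates the physical strands with the effective (Demazure-tracked) strands; the latter are unaffected at and after position $k$. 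The topological alternative via $\Delta_w$ is also valid, but unnecessary once this is seen.
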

	\begin{proof}
		Choose any pipe dream $P$ of $w$ with weight $\beta$. Since $|\beta|>\ell(w)$, $P$ is not reduced. Removing any single second crossing in $P$ yields a pipe dream $P'$ whose weight $\alpha$ satisfies the conditions of the lemma.
	\end{proof}
	
	For any diagram $D\subseteq [n]^2$, define the \emph{weight} of $D$ to be the vector $\mathrm{wt}(D)\in\mathbb{Z}^n$ whose $i$th component counts the number of boxes in row $i$ of $D$.
	
	\begin{definition}
		For any diagram $D$, the \emph{upper closure} $\overline{D}$ is the diagram 
		\[\overline{D} = \{(i,j)\ \mid \ j=j' \mbox{ and } i\leq i' \mbox{ for some } (i',j')\in D  \}. \]
	\end{definition}
	
%	If $D$ has columns $C_1,\ldots,C_n$, then
%	\[x^{\mathrm{wt}(D)} = \prod_{i=1}^n \prod_{j\in C_i} x_j. \]
	\begin{theorem}[{\cite[Theorem 1.2]{grothortho}}]
		\label{thm:upwardsdivisibility}
		For any permutation $w\in S_n$, 
		\[\mathrm{wt}(\overline{D(w)})\geq \alpha \mbox{ for all } \alpha\in \supp(\mathfrak{G}_w).\]		
		Consequently, 
		\[\deg \mathfrak{G}_w\leq \#\overline{D(w)}. \]
	\end{theorem}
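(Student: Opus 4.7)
The plan is to reduce the statement to one about pipe dreams via Theorem \ref{thm:pdformula}. Since sign cancellation among signed pipe dream contributions can only shrink the support, $\supp(\mathfrak{G}_w) \subseteq \{\mathrm{wt}(P) : P \in \mathrm{PD}(w)\}$, so it suffices to prove the stronger claim that $\mathrm{wt}(P) \leq \mathrm{wt}(\overline{D(w)})$ componentwise for every pipe dream $P \in \mathrm{PD}(w)$, reduced or not. The consequence on degrees then follows by summing coordinates, since $|\mathrm{wt}(\overline{D(w)})| = \#\overline{D(w)}$.

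For a reduced pipe dream $P \in \mathrm{RPD}(w)$, the bound falls directly out of Theorem \ref{thm:fms}. By that theorem $\mathrm{wt}(P) \in \supp(\mathfrak{S}_w)$ admits a decomposition $\mathrm{wt}(P) = \sum_{j=1}^{n} \zeta^{B_j}$ in which each $B_j$ is a basis of the Schubert matroid $\mathrm{SM}_n(D(w)_j)$. Definition \ref{def:schubert-matroids} forces $B_j \subseteq \{1, \ldots, \max D(w)_j\}$ (with $\max \emptyset = 0$), so $\zeta^{B_j}_i \leq [i \leq \max D(w)_j]$ where $[\,\cdot\,]$ denotes the Iverson bracket. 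Summing over $j$ then yields
\[
\mathrm{wt}(P)_i \;\leq\; \#\{j : \max D(w)_j \geq i\} \;=\; \mathrm{wt}(\overline{D(w)})_i.
\]

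The hard case is non-reduced pipe dreams, where cell-by-cell containment $P \subseteq \overline{D(w)}$ can fail even though the rowwise bound holds. The plan is to view $P \in \mathrm{PD}(w)$ as a subword $\sigma$ of the reading-order staircase word whose Demazure product is $w$, and iteratively remove length-decreasing letters to descend to an underlying reduced pipe dream $P_0 \in \mathrm{RPD}(w)$ with $P_0 \subseteq P$. The extra cells in $P \setminus P_0$ then correspond to second crossings between pairs of pipes that already cross in $P_0$. The key lemma to establish is that for each row $i$, the number of second crossings of $P$ in row $i$ does not exceed the slack $\mathrm{wt}(\overline{D(w)})_i - \mathrm{wt}(P_0)_i$; combined with the reduced case this gives the desired bound. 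Verifying this lemma is the technical heart: it requires tracking where two pipes of a reduced pipe dream of $w$ can re-meet, using the characterization of $D(w)$ as the inversion set of $w$. An alternative route is induction on $\binom{n}{2} - \ell(w)$ via the recursion $\mathfrak{G}_w = \overline{\partial}_j \mathfrak{G}_{ws_j}$ for an ascent $j$, showing $\overline{\partial}_j$ propagates the componentwise bound after comparing $\overline{D(ws_j)}$ with $\overline{D(w)}$ row by row; the obstacle there is handling supports carefully under the division by $x_j - x_{j+1}$ and the factor $(1 - x_{j+1})$.
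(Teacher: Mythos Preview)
The paper does not prove this theorem at all: it is quoted from \cite[Theorem 1.2]{grothortho} and used as a black box. So there is no ``paper's own proof'' to compare against, and your proposal is an attempt to supply an argument the authors deliberately outsourced.

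On the substance of your attempt: the reduced case is clean and correct. Your use of Theorem~\ref{thm:fms} to bound $\mathrm{wt}(P)_i$ by $\#\{j:\max D(w)_j\geq i\}=\mathrm{wt}(\overline{D(w)})_i$ for $P\in\mathrm{RPD}(w)$ is a nice observation. The gap is entirely in the non-reduced case, and you essentially acknowledge it. Your ``key lemma''---that the number of extra crosses of $P$ in row $i$ is at most the slack $\mathrm{wt}(\overline{D(w)})_i-\mathrm{wt}(P_0)_i$---is equivalent to the inequality $\mathrm{wt}(P)_i\leq \mathrm{wt}(\overline{D(w)})_i$ you are trying to prove, so the decomposition $P=P_0\cup(P\setminus P_0)$ gives no leverage until you actually exploit the structure of second crossings. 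Saying it ``requires tracking where two pipes of a reduced pipe dream of $w$ can re-meet'' is a description of the difficulty, not a resolution of it. The alternative induction via $\overline{\partial}_j$ is likewise only gestured at; the obstacle you name (controlling supports under division by $x_j-x_{j+1}$) is real and is precisely where the work lies.

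In short, what you have is a correct reduction and a correct proof of the easy (reduced) half, but the non-reduced half---which is the content of the theorem beyond what is already implicit in Theorem~\ref{thm:fms}---remains unproved in your write-up.
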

	
	The following theorem gives a polytopal interpretation for Theorem \ref{thm:upwardsdivisibility}. 
	For the Minkowski sum below, we use the natural inclusions $\mathbb{R}^{k-1}\to \mathbb{R}^k$ by appending a zero. 
	\begin{theorem}
		\label{thm:superset}
		Let $w\in S_n$ be any permutation and let $D(w)$ have columns $D_1,\ldots,D_n$. Set $d_j = \max(D_j)$, taking $\max(\emptyset)=0$. Then,
		\[\mathrm{Newton}(\mathfrak{G}_w) \subseteq \sum_{j=1}^n P_{\mathrm{sp}}(\mathrm{SM}_{d_j}(D_j)).\]
	\end{theorem}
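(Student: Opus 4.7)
The plan is to show the stronger inclusion $\supp(\mathfrak{G}_w) \subseteq \sum_{j=1}^n P_{\mathrm{sp}}(\mathrm{SM}_{d_j}(D_j))$, which implies the desired Newton polytope containment because the Newton polytope is the convex hull of its support. Reformulated, the task is: for each $\alpha \in \supp(\mathfrak{G}_w)$, produce $\alpha^{(1)}, \ldots, \alpha^{(n)} \in \{0,1\}^n$ with $\sum_{j=1}^n \alpha^{(j)} = \alpha$ such that each $\alpha^{(j)}$ is supported in $[d_j]$ and is the indicator vector of a spanning set of $\mathrm{SM}_{d_j}(D_j)$.

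I would build the decomposition in two stages. First, iterating Lemma \ref{lem:downwardsdivisibility} produces some $\gamma \in \supp(\mathfrak{S}_w)$ with $\gamma \leq \alpha$, and Theorem \ref{thm:fms} writes $\gamma = \sum_j \gamma^{(j)}$, where each $\gamma^{(j)}$ is the indicator of a basis $B_j$ of $\mathrm{SM}_n(D_j)$. Because every basis of $\mathrm{SM}_n(D_j)$ lies in $[d_j]$, the vector $\gamma^{(j)}$ is simultaneously the indicator of a basis of $\mathrm{SM}_{d_j}(D_j)$. Second, with $\delta := \alpha - \gamma \in \mathbb{Z}_{\geq 0}^n$, I would construct $\delta^{(1)}, \ldots, \delta^{(n)} \in \{0,1\}^n$ summing to $\delta$, each $\delta^{(j)}$ supported in $[d_j] \setminus B_j$. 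Defining $\alpha^{(j)} := \gamma^{(j)} + \delta^{(j)}$ then gives the indicator of a subset of $[d_j]$ containing the basis $B_j$, which is precisely a spanning set of $\mathrm{SM}_{d_j}(D_j)$.

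The construction of the $\delta^{(j)}$ can proceed row by row: for each row $i$, select any $\delta_i$ columns from $A_i := \{j : d_j \geq i,\ \gamma^{(j)}_i = 0\}$ and set $\delta^{(j)}_i = 1$ on each chosen $j$. The crux of the argument, and its main obstacle, is verifying that $|A_i| \geq \delta_i$, which is where Theorem \ref{thm:upwardsdivisibility} intervenes decisively. Since $\gamma^{(j)}_i = 1$ forces $d_j \geq i$, one obtains $|A_i| = \mathrm{wt}(\overline{D(w)})_i - \gamma_i$, and the bound $\alpha_i \leq \mathrm{wt}(\overline{D(w)})_i$ yields $|A_i| \geq \alpha_i - \gamma_i = \delta_i$. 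The need for spanning set polytopes (rather than matroid polytopes, as for $\mathfrak{S}_w$) on the right-hand side is exactly explained by this step: they absorb the extra mass $\delta$ contributed by non-reduced pipe dreams, while Theorem \ref{thm:upwardsdivisibility} certifies that the columnwise capacity $[d_j]$ always has room to house it.
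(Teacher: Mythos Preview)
Your proof is correct and follows essentially the same approach as the paper's: both reduce $\alpha$ to a Schubert support element via Lemma~\ref{lem:downwardsdivisibility}, decompose that element columnwise via Theorem~\ref{thm:fms}, and then use the row-by-row capacity bound $\mathrm{wt}(\overline{D(w)})_i \geq \alpha_i$ from Theorem~\ref{thm:upwardsdivisibility} to fit the remaining mass into the columns. The only cosmetic difference is that the paper starts from the full indicator of $\overline{D(w)}$ and deletes $\mathrm{wt}(\overline{D(w)})_i - \alpha_i$ unmarked entries per row, whereas you start from the basis indicators and add $\alpha_i - \gamma_i$ entries per row; the underlying counting is identical.
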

	\begin{proof}
		Let $\alpha\in\supp(\mathfrak{G}_w)$. By repeated use of Lemma \ref{lem:downwardsdivisibility}, we can write $\alpha = \beta+\gamma$ where $\beta\in\supp(\mathfrak{S}_w)$ and $\gamma\geq 0$. By Theorem \ref{thm:fms}, we can find a decomposition 
		\[\beta=\beta^{(1)}+\cdots+\beta^{(n)} \]
		where each $\beta^{(j)}$ is the indicator vector of a basis of $\mathrm{SM}_n(D_j)$.
		
		Let $\delta=\mathrm{wt}(\overline{D(w)})$, and note that
		\[\delta= \delta^{(1)}+\cdots+\delta^{(n)} \]
		where $\delta^{(j)}$ is the indicator vector in $\mathbb{R}^n$ of $\overline{D_j}=[d_j]$. Let $A$ be the $n\times n$ matrix with columns $\delta^{(j)}$. Equivalently, $A_{i,j}=1$ if and only if $(i,j)\in \overline{D(w)}$. 
		
		Observe that $\beta^{(j)}_i=1$ means $A_{ij}=1$. Call the entry $(i,j)$ of $A$ marked if $\beta^{(j)}_{i}=1$. Fix any $i\in[n]$. Since $\gamma_i\geq 0$, we have $\beta_i\leq \alpha_i$, so $\delta_i-\beta_i\geq \delta_i-\alpha_i$. Since $\delta_i-\beta_i$ is the number of unmarked entries of $A$ in row $i$, there are at least $\delta_i-\alpha_i$ unmarked entries in row $i$ of $A$. 
		
		For each $p\in [n]$, pick any $\delta_i-\alpha_i$ unmarked entries in row $i$ of $A$. Set all these entries to 0 to get a new matrix $B$. Let $\epsilon^{(j)}$ be the $j$th column of $B$ for each $j\in [n]$. By construction, 
		\[\epsilon^{(1)}+\cdots +\epsilon^{(n)} = \delta-(\delta-\alpha)=\alpha. \]
		From the use of marked entries, we see that $\delta^{(j)}\geq \epsilon^{(j)}\geq\beta^{(j)}$ for each $j$, so $\epsilon^{(j)} \in P_{\mathrm{sp}}(\mathrm{SM}_{d_j}(D_j))$.
		Thus,
		\[\alpha = \sum_{j=1}^n \epsilon^{(j)} \in \sum_{j=1}^n P_{\mathrm{sp}}(\mathrm{SM}_{d_j}(D_j)). \qedhere \]
	\end{proof}

%	\begin{remark}
%		Theorem \ref{thm:superset} still holds when $\mathfrak{G}_w$ is replaced by the inhomogeneous orthodontia polynomial $\mathscr{G}_D$ of any strongly separated diagram $D$, see \cite[Section 7]{grothortho}.
%	\end{remark}
	
%	In Proposition \ref{prop:converse}, we give a conjectural characterization of equality in Theorem \ref{thm:superset}.
	
	We now make two pairs of conjectures describing the support of Grothendieck polynomials. We provide partial results and describe some implications of the conjectures.
	 
	Theorem \ref{thm:upwardsdivisibility} shows the vector $\mathrm{wt}(\overline{D(w)})$
	is an upper bound (in $\mathbb{Z}^n$) for $\supp(\mathfrak{G}_w)$.
	When $\mathrm{wt}(\overline{D(w)})\in \supp(\mathfrak{G}_w)$, it is the unique maximal element. 
	We conjecture that all maximal elements of $\supp(\mathfrak{G}_w)$ have the same degree. 
		
	\begin{namedconjecture}[\ref{conj:1}]
		If $\alpha\in \mathrm{supp}(\mathfrak{G}_w)$ and $|\alpha|<\deg \mathfrak{G}_w$, then there exists $\beta\in\mathrm{supp}(\mathfrak{G}_w)$ with $\alpha< \beta$.
	\end{namedconjecture}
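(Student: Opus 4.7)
My plan is to attack Conjecture \ref{conj:1} primarily through pipe dreams, with a polytopal fallback if the combinatorial route resists. Fix $\alpha\in\supp(\mathfrak{G}_w)$ with $|\alpha|<\deg\mathfrak{G}_w$. Since $C_{w\alpha}\neq 0$, the signed sum
\[
C_{w\alpha}=\sum_{\substack{P\in\mathrm{PD}(w)\\ \mathrm{wt}(P)=\alpha}}(-1)^{\#\mathrm{crosses}(P)-\ell(w)}
\]
is nonzero, so at least one pipe dream $P_\alpha$ of weight $\alpha$ exists, and by hypothesis at least one pipe dream $Q$ of weight $\gamma$ with $|\gamma|>|\alpha|$ exists too. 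The easy case is when $\gamma\geq\alpha$; the real work is when $\gamma$ is componentwise incomparable to $\alpha$.

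The first step of the direct approach is to locate a row $i$ with $\alpha_i<\mathrm{wt}(\overline{D(w)})_i$ (which exists by Theorem \ref{thm:upwardsdivisibility} because $|\alpha|<\deg\mathfrak{G}_w\leq\#\overline{D(w)}$) and to produce from $P_\alpha$ a new pipe dream $P'\in\mathrm{PD}(w)$ obtained by inserting a single crossing at some empty tile $(i,j)$, so that $\mathrm{wt}(P')=\alpha+e_i$. The validity of the insertion can be checked by the pipe-reading rule: the only way the insertion fails to give a pipe dream of $w$ is when the two strands meeting at $(i,j)$ were already routed so that crossing them changes the output permutation. I would try to show that among all rows $i$ with slack, at least one admits a legal insertion, by an averaging argument that compares the total crossings available in row $i$ of $P_\alpha$ with those forced in $\overline{D(w)}$.

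The main obstacle is the second step: even when pipe dreams of weight $\alpha+e_i$ exist, their signed count $C_{w,\alpha+e_i}$ might vanish. To control this, my plan is to produce a sign-reversing involution on the set $\{P\in\mathrm{PD}(w):\mathrm{wt}(P)=\alpha+e_i\}$ whose fixed-point set is nonempty and is seen to contribute with a common sign. The natural source for such an involution is a local ``cross/elbow swap'' move, analogous to the mitosis operators used in the combinatorics of Schubert polynomials. Iterating over $i$ (and possibly bootstrapping by replacing $\alpha$ with an intermediate $\alpha'>\alpha$ of strictly higher degree), one would conclude the existence of the required $\beta$. This is the step I expect to be the main obstacle, because cancellations depend globally on the shape of $D(w)$, and indeed the authors restrict to fireworks and Grassmannian permutations precisely because pipe dreams admit a rigid normal form there.

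If the involution approach is intractable in full generality, I would pivot to the polytopal fallback: assuming Conjecture \ref{conj:4}, the Newton polytope of $\mathfrak{G}_w$ is a generalized polymatroid with SNP, and generalized polymatroids enjoy an integer exchange property (which follows from Lemmas \ref{lem:recovery} and \ref{lem:integralgpoly}) ensuring that every non-maximal lattice point has a lattice neighbor obtained by adding a unit vector $e_i$. This gives Conjecture \ref{conj:1} for free once Conjecture \ref{conj:4} is known, so in practice the proof strategy splits into either (a) a direct combinatorial/involutive argument for general $w$, or (b) first establishing the polymatroidal structure of $\mathrm{Newton}(\mathfrak{G}_w)$ and harvesting Conjecture \ref{conj:1} as a corollary.
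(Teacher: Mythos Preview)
Your proposal targets the full conjecture, but the paper does \emph{not} prove Conjecture~\ref{conj:1} in general: it remains open there. The paper establishes only the special case of fireworks permutations (Theorem~\ref{thm:fireworks}), and the argument is structurally different from yours. Rather than inserting crossings and controlling cancellation, the paper shows that for fireworks $w$ the top-degree support $\supp(\mathfrak{G}_w^{\mathrm{top}})$ is the single point $\mathrm{wt}(\overline{D(w)})$ (Theorem~\ref{thm:fireworks-support}), by combining Theorem~\ref{thm:upwardsdivisibility} with the identification $\rajcode(w)=\mathrm{wt}(\overline{D(w)})$ (Lemma~\ref{lem:fireworks-raj-equals-closure}) and Theorem~\ref{thm:pswleadingterm}. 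Since this single point is a componentwise upper bound for the entire support, every $\alpha$ with $|\alpha|<\deg\mathfrak{G}_w$ is strictly below it. No insertion or involution is needed.

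The gap in your plan is precisely the one you flag: the sign-reversing involution with nonempty fixed-point set of constant sign. Producing a pipe dream of weight $\alpha+e_i$ does not give $\alpha+e_i\in\supp(\mathfrak{G}_w)$, and there is no known mechanism to force noncancellation in general; this is why the paper confines itself to classes where the obstacle dissolves. Your polytopal fallback is also not a proof: it assumes Conjecture~\ref{conj:4}, which is itself open (the paper proves it only for Grassmannian permutations, Theorem~\ref{thm:grass}). So both branches of your strategy either contain an unproven step or assume another open conjecture. If your aim is to recover what the paper actually proves, the cleaner route is to exploit the existence of a unique maximal element dominating all of $\supp(\mathfrak{G}_w)$ in the fireworks case, bypassing cancellation entirely.
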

	The following is a natural strengthening of Conjecture \ref{conj:1}, dual to Lemma \ref{lem:downwardsdivisibility}.
	\begin{namedconjecture}[\ref{conj:2}]
		If $\alpha\in \mathrm{supp}(\mathfrak{G}_w)$ and $|\alpha|<\deg \mathfrak{G}_w$, then there exists $\beta\in\mathrm{supp}(\mathfrak{G}_w)$ with $\alpha< \beta$ and $|\beta|=|\alpha|+1$.
	\end{namedconjecture}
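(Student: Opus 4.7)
The plan is to combine the pipe dream formula (Theorem~\ref{thm:pdformula}) with the topological structure of the pipe dream complex $\Delta_w$ from Theorems~\ref{thm:pdcomplex} and \ref{thm:ball}. Fix $\alpha \in \supp(\mathfrak{G}_w)$ with $|\alpha| < \deg \mathfrak{G}_w$. By Theorem~\ref{thm:upwardsdivisibility} we have $\alpha \leq \mathrm{wt}(\overline{D(w)})$ componentwise, while $|\alpha| < \deg \mathfrak{G}_w \leq \#\overline{D(w)}$, so there must be some row $i$ with $\alpha_i < \mathrm{wt}(\overline{D(w)})_i$. The target is to produce, for some such $i$, a nonzero coefficient at the monomial $x^{\alpha + e_i}$.

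First I would attempt a direct pipe-dream construction. Writing $\mathrm{PD}(w,\alpha) = \{P \in \mathrm{PD}(w) : \mathrm{wt}(P) = \alpha\}$, the hypothesis gives
\[C_{w\alpha} = \sum_{P \in \mathrm{PD}(w,\alpha)} (-1)^{\#\mathrm{crosses}(P) - \ell(w)} \neq 0.\]
The next step is to define a ``lifting'' $P \mapsto P'$ that promotes a canonically chosen elbow in row $i$ of $P$ to a crossing, producing a (generally non-reduced) pipe dream for $w$ of weight $\alpha + e_i$. The local constraint is that the new crossing must be a second meeting of two strands that already crossed, so that the reading permutation is preserved; since $\alpha_i$ is strictly below the Rothe-diagram bound $\mathrm{wt}(\overline{D(w)})_i$, such a column should always be available. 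Choosing (say) the leftmost valid column makes the lifting injective, and its image contributes $-C_{w\alpha}$ to $C_{w,\alpha+e_i}$ because each $P'$ has exactly one more crossing than $P$. The remaining task is to show that the contribution of the unlifted weight-$(\alpha+e_i)$ pipe dreams is not $+C_{w\alpha}$, ideally by constructing a sign-reversing involution on them.

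The hardest step will be producing such an involution, or otherwise ruling out accidental cancellation $C_{w,\alpha+e_i} = 0$. The proofs for Grassmannian and fireworks permutations (Theorems~\ref{thm:grass} and \ref{thm:fireworks}) exploit rigid column structure in $D(w)$ that is unavailable in general, so a new combinatorial idea is likely required; the ball structure from Theorem~\ref{thm:ball} may help organize unlifted pipe dreams into cancelling pairs using moves on $\Delta_w$. As a secondary route, note that Conjectures~\ref{conj:1} and \ref{conj:3} together imply Conjecture~\ref{conj:2}: given $\gamma > \alpha$ in $\supp(\mathfrak{G}_w)$ from Conjecture~\ref{conj:1}, any $\beta = \alpha + e_i$ with $\alpha_i < \gamma_i$ lies in the interval $[\alpha,\gamma]$, which is contained in $\supp(\mathfrak{G}_w)$ by Conjecture~\ref{conj:3}. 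Thus an alternative attack is to first establish Conjectures~\ref{conj:1} and \ref{conj:3}, perhaps through the generalized polymatroid description of Conjecture~\ref{conj:4}.
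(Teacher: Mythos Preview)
The statement you are attempting to prove is Conjecture~\ref{conj:2}, which remains open in the paper. The only case settled there is the Grassmannian case (Theorem~\ref{thm:grass}), and that proof does not proceed via pipe dreams at all: it uses the explicit description of $\supp(\mathfrak{G}_w)$ from Theorem~\ref{thm:escobar-yong}. Given $\alpha$ with $|\alpha|=\ell(w)+j<\deg\mathfrak{G}_w$, one has $\alpha\trianglelefteq\mu^{(j)}$, and the single index $i$ with $\mu^{(j+1)}-\mu^{(j)}=e_i$ automatically gives $\alpha+e_i\trianglelefteq\mu^{(j+1)}$, hence $\alpha+e_i\in\supp(\mathfrak{G}_w)$. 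There is no analysis of cancellation in the pipe dream sum; the support is already known set-theoretically. Also, Theorem~\ref{thm:fireworks} establishes only Conjecture~\ref{conj:1} for fireworks permutations, not Conjecture~\ref{conj:2}, so your parenthetical reference overstates what is available.

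Your plan has two genuine gaps, one of which you flag and one you do not. The unflagged one comes first: the lifting $P\mapsto P'$ need not exist. Knowing $\alpha_i<\mathrm{wt}(\overline{D(w)})_i$ only says that row $i$ of $P$ has an elbow in the staircase region; it does not guarantee that converting any such elbow to a cross yields a pipe dream for $w$. In the language of Theorem~\ref{thm:pdcomplex}, $P$ is an interior face of $\Delta_w$, and the face obtained by adding a cross is a subface of $P$; subfaces of interior faces can lie on the boundary, i.e.\ in some $\Delta_v$ with $v>w$. So ``leftmost valid column'' may be empty for every $i$, and the injection you want is not defined. The second gap is the one you identify: even when liftings exist, nothing prevents the remaining weight-$(\alpha+e_i)$ pipe dreams from contributing exactly $+C_{w\alpha}$. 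Your proposed sign-reversing involution on the complement is precisely the missing idea; the paper offers no mechanism for this, and the ball structure of $\Delta_w$ controls global Euler characteristics (as in Proposition~\ref{prop:grothprinspec}) rather than the fibers over a fixed weight vector. Your secondary route via Conjectures~\ref{conj:1} and~\ref{conj:3} is logically correct and is recorded in the paper, but both of those conjectures are themselves open in general.
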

%	We have tested Conjecture \ref{conj:2} for all permutations $w\in S_8$.
	We prove Conjecture \ref{conj:1} in the special case of fireworks permutations by confirming $\mathrm{wt}(\overline{D(w)})\in \supp(\mathfrak{G}_w)$.
%	\avecom{and for 1423 avoiding?}
	
	\begin{definition}[{\cite[Definition 3.5]{PSW}}]
		A permutation $w \in S_n$ is called \emph{fireworks} if the initial elements of its decreasing runs occur in increasing order.
	\end{definition}
	
	\begin{example}
		Consider $w=267419853$. The decreasing runs of $w$ are $2|6|741|9853$, so $w$ is fireworks since $2<6<7<9$. The Rothe diagram of $w$ is shown in Figure \ref{fig:fireworks-rothe-example}.
	\end{example}
	\begin{figure}[h]
		\includegraphics[scale=1]{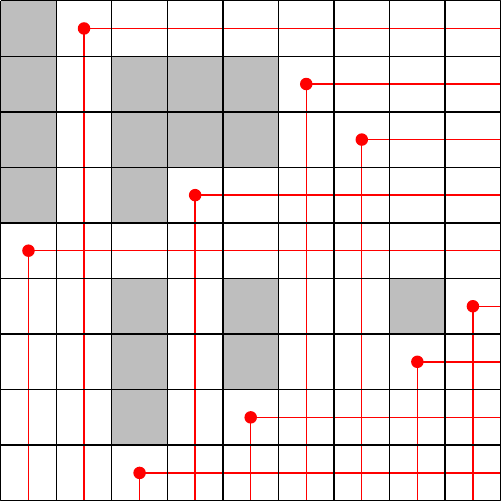}
		\caption{The Rothe diagram of the fireworks permutation $w=267419853$.}
		\label{fig:fireworks-rothe-example}
	\end{figure}
	Observe in the previous example that there is a dot directly below the southmost box in each column of $D(w)$. We show this property characterizes fireworks permutations.
	\begin{proposition}
		\label{prop:fireworks-diagram}
		Let $w\in S_n$ and $D(w)$ have columns $D_1,\ldots,D_n$. Then $w$ is fireworks if and only if $D_{w(j)}\neq \emptyset$ implies $\max(D_{w(j)}) = j-1$.
	\end{proposition}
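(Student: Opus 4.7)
My plan is to translate both sides of the equivalence into direct conditions on the one-line notation of $w$, and then match them up. By definition of the Rothe diagram, column $D_{w(k)}$ consists of those $i < k$ with $w(i) > w(k)$. Consequently, $D_{w(k)} \neq \emptyset$ is equivalent to saying $w(k)$ is not a left-to-right maximum of $w$ (i.e.\ there exists some $i < k$ with $w(i) > w(k)$), and $\max(D_{w(k)}) = k-1$ says precisely that $w(k-1) > w(k)$, so that position $k$ lies strictly inside a decreasing run. After this translation, the right-hand condition becomes the implication
\[\text{$w(k)$ is not a left-to-right maximum} \implies w(k-1) > w(k),\]
or equivalently, by contrapositive, \emph{every position $k$ that starts a decreasing run of $w$ carries a left-to-right maximum value}. (The case $k=1$ is vacuous: $D_{w(1)} = \emptyset$ always.)

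Let $k_1 < k_2 < \cdots < k_r$ be the positions at which decreasing runs of $w$ start, so the fireworks condition says $w(k_1) < w(k_2) < \cdots < w(k_r)$. I would then prove the two directions as follows.

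For the forward direction, assume $w$ is fireworks. Fix $k_i$ and any $j < k_i$; say $j$ lies in the run beginning at $k_s$ for some $s < i$. Then $w(j) \leq w(k_s)$ because $j$ lies weakly past the start of a decreasing run, and $w(k_s) < w(k_i)$ by the fireworks property. Hence $w(k_i)$ is a left-to-right maximum, giving the required implication. For the reverse direction, assume each $w(k_i)$ is a left-to-right maximum; then in particular $w(k_i) > w(k_{i-1}) > \cdots > w(k_1)$, so $w$ is fireworks.

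The argument is essentially a bookkeeping exercise, and I do not anticipate a real obstacle; the only thing one must be careful about is the boundary case $k=1$ (where $D_{w(1)}$ is empty by convention and both sides are trivially consistent) and the observation that ``$\max(D_{w(k)}) = k-1$'' is automatic whenever $D_{w(k)}$ is nonempty \emph{and} $w(k-1) > w(k)$, since any $i \in D_{w(k)}$ satisfies $i \leq k-1$.
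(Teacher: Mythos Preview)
Your proposal is correct and follows essentially the same approach as the paper: both arguments translate the Rothe diagram condition into the statement that each run-initial value is a left-to-right maximum, and then match this with the fireworks definition. The only cosmetic difference is that you prove the reverse implication directly (left-to-right maxima at run-starts force the run-initials to increase), whereas the paper argues by contrapositive (if $w$ is not fireworks, exhibit a run-initial $w(j)$ with $w(j-1)<w(j)<w(i)$ for some earlier $i$, so $D_{w(j)}\neq\emptyset$ but $\max D_{w(j)}<j-1$).
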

	\begin{proof}
		First, note that $D_{w(j)}\neq \emptyset$ if and only if there is $i<j$ with $w(i)>w(j)$. In particular when $w(j-1)>w(j)$, the box $(j-1,w(j))$ is the southmost box in column $w(j)$ of $D(w)$. 
		
		Suppose $w$ is fireworks. If $w(j)$ is initial in a decreasing run of $w$, then $w(j)>w(i)$ for all $i<j$, so $D_{w(j)}=\emptyset$. If $w(j)$ is not initial in a decreasing run of $w$, then $w(j-1)>w(j)$ and we are done.
		
		Conversely, suppose $w$ is not fireworks. Then, we can find a decreasing run $w(i)>w(i+1)>\cdots>w(j-1)$ with $w(j-1)<w(j)<w(i)$. Then $D_{w(j)}\neq \emptyset$, but $(j-1,w(j))\notin D_{w(j)}$, so $\max(D_{w(j)})<j-1$.
	\end{proof}

	\begin{definition}[{\cite{PSW}}]
		The \emph{Rajchgot code} of $w\in S_n$ is the vector $\rajcode(w) = (r_1,\ldots,r_n)$, where $r_j$ is defined as follows.
		For each $j$, choose an increasing subsequence of $w(j),w(j+1), \ldots,w(n)$ containing $w(j)$ and of greatest length
		among all such subsequences. Let $r_j$ be the number of terms from $w(j),w(j+1),\ldots,w(n)$ omitted
		to form the chosen subsequence. 
%		Define $\mathrm{raj}(w) = r_1+\cdots+r_n$.
	\end{definition}
	
	\begin{theorem}[{\cite[Theorem 1.1]{PSW}}]
		\label{thm:pswleadingterm}
		Let $w\in S_n$ and $\rajcode(w) = (r_1,\ldots,r_n)$. Then $\deg \mathfrak{G}_w=r_1+\cdots+r_n$, and in any term order satisfying $x_1<x_2<\cdots<x_n$, the leading term of $\mathfrak{G}_w$ is a scalar multiple of $x^{\rajcode(w)}$.
	\end{theorem}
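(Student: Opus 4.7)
The plan is to combine the pipe dream formula of Theorem~\ref{thm:pdformula} with an explicit construction of a distinguished pipe dream whose weight realizes $\rajcode(w)$. A term order satisfying $x_1 < x_2 < \cdots < x_n$ prioritizes the exponent of $x_n$ when comparing monomials, then $x_{n-1}$, and so on, so the leading monomial of $\mathfrak{G}_w$ corresponds (up to possible cancellation among pipe dreams of equal weight) to a pipe dream $P$ whose weight vector $(c_1,\ldots,c_n)$ is lexicographically maximal when read from $c_n$ down to $c_1$. The goal is to exhibit a \emph{unique} such pipe dream $P^{*}$ and to compute $\mathrm{wt}(P^{*}) = \rajcode(w)$.

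First I would construct $P^{*}$ by a greedy sweep from the bottom upward: for $j$ decreasing from $n-1$ to $1$, place in row $j$ the maximum possible number of crosses consistent with the tiling still being a pipe dream for $w$, given the choices already made in rows $j+1,\ldots,n-1$. Second, I would identify $\mathrm{wt}(P^{*})_j$ with $r_j$ by setting up a bijection in which each cross in row $j$ of $P^{*}$ corresponds to an entry of $w(j), w(j+1), \ldots, w(n)$ that is omitted from a longest increasing subsequence starting at $w(j)$. The underlying idea is that each such omitted entry forces one extra (possibly non-reduced) crossing to be routed through row $j$ as the strands entering the top of the diagram are permuted to match $w$, while elements belonging to the longest increasing subsequence are carried down without crossings in that row.

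Third, I would rule out cancellation by proving uniqueness: any pipe dream $Q$ with $\mathrm{wt}(Q) = \rajcode(w)$ must agree with $P^{*}$ row by row from the bottom up, since the greedy choice in each row is forced once the rows below are fixed. Consequently the coefficient of $x^{\rajcode(w)}$ in $\mathfrak{G}_w$ is the single nonzero sign $(-1)^{\#\mathrm{crosses}(P^{*}) - \ell(w)}$. Since no pipe dream of $w$ produces a weight strictly larger than $\rajcode(w)$ in the lex order read from the bottom (by maximality of the greedy construction), this both identifies the leading term as $\pm x^{\rajcode(w)}$ and yields $\deg\mathfrak{G}_w = r_1 + \cdots + r_n$.

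The main obstacle is the bijection in the second step. Greedy placement of crosses is delicate because a single added cross can reroute strands globally, silently changing the permutation read off at the boundary of the diagram; making "maximum number of crosses in row $j$" rigorous requires tracking precisely how a longest increasing subsequence of $w(j),w(j+1),\ldots,w(n)$ starting at $w(j)$ controls which strands must cross within row $j$. I expect this wire-chasing argument — and the matching uniqueness statement that underlies non-cancellation — to be the technical heart of the proof.
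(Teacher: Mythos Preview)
This theorem is not proved in the paper under review; it is quoted as \cite[Theorem 1.1]{PSW} and used as a black box (in particular, in the proof of Theorem~\ref{thm:fireworks-support}). There is therefore no proof in the present paper to compare your proposal against.

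For what it is worth, your outline is a plausible heuristic but not yet a proof: the greedy bottom-up construction of a pipe dream with row-by-row maximal weight, the identification of that weight with $\rajcode(w)$, and the uniqueness claim all require substantial arguments that you yourself flag as the ``technical heart'' and do not supply. In particular, ``place the maximum possible number of crosses in row $j$ consistent with still being a pipe dream for $w$'' is not obviously well-defined: different placements of the same number of crosses in row $j$ can lead to different constraints on the rows above, so you would need to specify \emph{which} positions receive crosses, verify inductively that the resulting tiling is a pipe dream for $w$, and only then argue uniqueness. Without these details the non-cancellation step, and hence the identification of the leading term, is not established.
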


	The following two lemmas describe $\rajcode(w)$ when $w$ is a fireworks permutation.
	\begin{lemma}
		\label{lem:rajcode-fireworks}
		Let $w \in S_n$ be fireworks. The Rajchgot code $\rajcode(w)=(r_1,\ldots,r_n)$ can be read off from $w$ as follows:
		\[
		r_i = 
		\begin{cases}
			0 &\textup{ if } i = n \\
			r_{i+1} &\textup{ if } w(i) < w(i+1)\\
			r_{i+1} + 1 &\textup{ if } w(i) > w(i+1)
		\end{cases}
		\]
	\end{lemma}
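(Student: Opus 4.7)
The plan is to reinterpret the Rajchgot code via a length statistic and then read off that statistic from the decreasing-run decomposition. Let $L_i$ denote the length of a longest increasing subsequence of $w(i), w(i+1), \ldots, w(n)$ containing $w(i)$, so that $r_i = (n-i+1) - L_i$. The claimed recursion for $r_i$ is then equivalent to $L_n = 1$, $L_i = L_{i+1}+1$ when $w(i) < w(i+1)$, and $L_i = L_{i+1}$ when $w(i) > w(i+1)$.

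To establish these relations, I would decompose $w$ into its decreasing runs $R_1 \mid R_2 \mid \cdots \mid R_m$, with initial positions $p_1 < p_2 < \cdots < p_m$ and initial values $a_k := w(p_k)$; the fireworks hypothesis asserts precisely that $a_1 < a_2 < \cdots < a_m$. Suppose $w(i)$ lies in run $R_k$. Since each $R_\ell$ is strictly decreasing, any increasing subsequence of $w(i),\ldots,w(n)$ uses at most one element from each run $R_k, R_{k+1}, \ldots, R_m$, giving $L_i \leq m - k + 1$. For the matching lower bound, note $w(i) \leq a_k < a_{k+1} < \cdots < a_m$ by the fireworks condition, so $w(i), w(p_{k+1}), w(p_{k+2}), \ldots, w(p_m)$ is a valid increasing subsequence of length $m-k+1$. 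Therefore $L_i = m - k + 1$.

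The recursion then follows by a short case analysis. When $i = n$, $w(n)$ lies in $R_m$, so $L_n = 1$ and $r_n = 0$. When $w(i) > w(i+1)$, positions $i$ and $i+1$ belong to the same run $R_k$, so $L_{i+1} = L_i$ and the length $n-i+1$ drops by one, giving $r_i = r_{i+1}+1$. When $w(i) < w(i+1)$, position $i+1$ starts the next run $R_{k+1}$, so $L_{i+1} = m-k = L_i - 1$, and the two decrements cancel to give $r_i = r_{i+1}$.

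The crux is the formula $L_i = m - k + 1$. The upper bound is an immediate consequence of the runs being decreasing; the matching lower bound is exactly where the fireworks hypothesis enters, as it is what ensures $w(i)$ can be extended by every subsequent run initial. Once this identification is made, the recursion is a direct computation, so I do not anticipate a serious obstacle.
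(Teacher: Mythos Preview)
Your argument is correct and follows essentially the same approach as the paper: both identify that a longest increasing subsequence of $w(i),\ldots,w(n)$ starting at $w(i)$ consists of $w(i)$ together with the initial elements of all subsequent decreasing runs, which is precisely your formula $L_i = m-k+1$. The paper states this in a single sentence and leaves the recursion implicit, whereas you spell out both the upper bound (at most one element per run) and the case analysis deriving the recursion; no substantive difference in method.
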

	\begin{proof}
		Since $w$ is fireworks, one greatest length increasing subsequence of $w(j),w(j+1),\ldots,w(n)$ (starting with $w(j)$) consists of $w(j)$ together with the initial elements of all subsequent decreasing runs.
	\end{proof}

	\begin{lemma}
		\label{lem:fireworks-raj-equals-closure}
		When $w\in S_n$ is fireworks, $\rajcode(w)=\mathrm{wt}(\overline{D(w)})$
	\end{lemma}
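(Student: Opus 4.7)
The plan is to verify that both vectors agree entrywise by reducing each of them to the simple combinatorial statistic
\[
d_i(w) := \#\{j : i \le j \le n-1,\ w(j) > w(j+1)\},
\]
the number of descents of $w$ occurring at positions $\ge i$. First I would unroll the recursion in Lemma \ref{lem:rajcode-fireworks}: since $r_n = 0$ and $r_i - r_{i+1}$ equals $1$ or $0$ depending on whether position $i$ is a descent of $w$, a telescoping sum gives $r_i = d_i(w)$ directly. This half of the argument uses only the fireworks hypothesis through Lemma \ref{lem:rajcode-fireworks} and requires no combinatorial work on diagrams.

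Next I would compute $\mathrm{wt}(\overline{D(w)})_i$ by invoking Proposition \ref{prop:fireworks-diagram}. Reindexing the columns of $D(w)$ by writing column $c$ as $w(k)$ for the unique $k$ with $w(k) = c$, Proposition \ref{prop:fireworks-diagram} tells us that $D_{w(k)}$ is either empty or equals a subset of $[n]$ whose maximum is exactly $k-1$. Therefore the upper closure $\overline{D_{w(k)}}$ is either empty or the interval $\{1, 2, \ldots, k-1\}$. I still need to identify precisely which $k$ give nonempty columns: for a fireworks permutation the initial elements of decreasing runs are strictly larger than all preceding values, so $D_{w(k)} = \emptyset$ iff $w(k)$ is the initial element of a decreasing run iff $k = 1$ or $w(k-1) < w(k)$. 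Equivalently, $D_{w(k)} \ne \emptyset$ iff $k \ge 2$ and $w(k-1) > w(k)$.

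Combining these observations,
\[
\mathrm{wt}(\overline{D(w)})_i = \#\{k : D_{w(k)} \ne \emptyset,\ i \le k-1\} = \#\{k : 2 \le k \le n,\ w(k-1) > w(k),\ k-1 \ge i\},
\]
which after the substitution $j = k-1$ is exactly $d_i(w)$. Matching this with the computation of $r_i$ finishes the proof.

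The only mildly nontrivial step is the characterization of when $D_{w(k)}$ is empty, but this falls out of Proposition \ref{prop:fireworks-diagram} together with the observation that the initial elements of the decreasing runs of a fireworks permutation form an increasing sequence dominating every earlier letter; the rest is careful bookkeeping via reindexing and a telescoping sum.
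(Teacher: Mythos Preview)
Your proof is correct and takes essentially the same approach as the paper's: both reduce each side to the count of positions $j\ge i$ at which $w(j)$ is not initial in its decreasing run (equivalently, descents at position $\ge i$), using Proposition~\ref{prop:fireworks-diagram} and its proof for the $\mathrm{wt}(\overline{D(w)})$ side and Lemma~\ref{lem:rajcode-fireworks} for the $\rajcode$ side. Your version is just more explicit in naming the intermediate statistic $d_i(w)$ and spelling out the telescoping and reindexing.
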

	\begin{proof}
		Let $\rajcode(w)=(r_1,\ldots,r_n)$. From Proposition \ref{prop:fireworks-diagram} and its proof together with Lemma \ref{lem:rajcode-fireworks} we see that
		\begin{align*}
			\mathrm{wt}(\overline{D(w)})_i &= \# \mbox{ of columns of $D(w)$ with a box in any row $j\geq i$}\\
			&=\#\{j\in [n] \mid w(j)\mbox{ is not initial in a decreasing run of $w$, and }j\geq i+1 \}\\
			&=r_i.\qedhere
		\end{align*}
	\end{proof}

	Denote by $\mathfrak G_w^\mathrm{top}$ the highest-degree nonzero homogeneous component of $\mathfrak G_w$. 	
	\begin{theorem}
		\label{thm:fireworks-support}
		Let $w\in S_n$ be fireworks. Then $\supp(\mathfrak G_w^\mathrm{top}) = \{\mathrm{wt}(\overline{D(w)}) \}$. 
%		In particular, $\mathfrak G_w^\mathrm{top} = \chi_{\overline{D(w)}}$.
	\end{theorem}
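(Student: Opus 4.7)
The plan is to chain together three results stated earlier in the excerpt: Theorem~\ref{thm:upwardsdivisibility}, Theorem~\ref{thm:pswleadingterm}, and Lemma~\ref{lem:fireworks-raj-equals-closure}. Once they are assembled, the desired statement is essentially forced by a pigeonhole / size comparison.

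First, I would extract the degree of $\mathfrak{G}_w$. Theorem~\ref{thm:pswleadingterm} gives $\deg\mathfrak{G}_w = |\rajcode(w)|$, and Lemma~\ref{lem:fireworks-raj-equals-closure} identifies $\rajcode(w) = \mathrm{wt}(\overline{D(w)})$ in the fireworks case. Consequently
\[
\deg \mathfrak{G}_w \;=\; |\mathrm{wt}(\overline{D(w)})|.
\]
Moreover, Theorem~\ref{thm:pswleadingterm} says the leading term of $\mathfrak{G}_w$ (in any order with $x_1<\cdots<x_n$) is a nonzero scalar multiple of $x^{\rajcode(w)}=x^{\mathrm{wt}(\overline{D(w)})}$, so the exponent $\mathrm{wt}(\overline{D(w)})$ lies in $\mathrm{supp}(\mathfrak{G}_w)$, and since its total degree equals $\deg\mathfrak{G}_w$, it lies in $\mathrm{supp}(\mathfrak{G}_w^{\mathrm{top}})$.

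Second, I would prove the reverse inclusion. Let $\alpha\in\mathrm{supp}(\mathfrak{G}_w^{\mathrm{top}})$. By Theorem~\ref{thm:upwardsdivisibility}, $\alpha \leq \mathrm{wt}(\overline{D(w)})$ componentwise. By definition of $\mathfrak{G}_w^{\mathrm{top}}$ and the degree computation above,
\[
|\alpha| \;=\; \deg\mathfrak{G}_w \;=\; |\mathrm{wt}(\overline{D(w)})|.
\]
A componentwise inequality between nonnegative vectors with equal coordinate sums forces equality, so $\alpha = \mathrm{wt}(\overline{D(w)})$, completing the proof.

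There is no real obstacle here: the entire argument is a short synthesis of results already quoted, and the fireworks hypothesis enters only through Lemma~\ref{lem:fireworks-raj-equals-closure} to match the leading-term exponent with the upper bound $\mathrm{wt}(\overline{D(w)})$. The content of the theorem lies in the earlier lemmas, not in this final packaging.
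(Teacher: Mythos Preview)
Your proof is correct and follows essentially the same approach as the paper: both combine Theorem~\ref{thm:upwardsdivisibility}, Theorem~\ref{thm:pswleadingterm}, and Lemma~\ref{lem:fireworks-raj-equals-closure}, and then finish by observing that a componentwise upper bound attained in the top-degree support must be the unique element there. The paper phrases this last step as ``no two elements in the support of a homogeneous polynomial can be componentwise comparable,'' while you phrase it as ``equal coordinate sums plus componentwise inequality forces equality''; these are the same observation.
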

	\begin{proof}
		Theorem \ref{thm:upwardsdivisibility} establishes $\mathrm{wt}(\overline{D(w)})$ as an upper bound for $\supp(\mathfrak{G}_w)$ (under componentwise comparison). 
		Theorem \ref{thm:pswleadingterm} together with Lemma \ref{lem:fireworks-raj-equals-closure} show that when $w$ is fireworks, \[\mathrm{wt}(\overline{D(w)})=\rajcode(w) \in \supp(\mathfrak{G}_w^{\mathrm{top}}).\]
		
		Since no two elements in the support of a homogeneous polynomial can be componentwise comparable, the theorem follows.
% $\mathrm{wt}(\overline{D(w)})\in \supp(\mathfrak{G}_w^{\mathrm{top}})$ implies the theorem.
%		\[\supp(\mathfrak{G}_w^{\mathrm{top}})=\{\rajcode(w) \}.\]
	\end{proof}

%	\avecom{Claim possibly worth proving: same conclusion holds when w avoids 1423}
	
	\begin{theorem}
		\label{thm:fireworks}
		Conjecture \ref{conj:1} holds for fireworks permutations. 
	\end{theorem}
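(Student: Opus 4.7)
The plan is to observe that Theorem \ref{thm:fireworks-support} already does essentially all the work: it identifies $\mathrm{wt}(\overline{D(w)})$ as the unique top-degree element of $\supp(\mathfrak{G}_w)$ when $w$ is fireworks, so the support has a unique maximum under componentwise comparison. Combined with Theorem \ref{thm:upwardsdivisibility}, which bounds every support vector above by $\mathrm{wt}(\overline{D(w)})$, Conjecture \ref{conj:1} for fireworks $w$ becomes immediate.

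In detail, I would proceed as follows. Let $w \in S_n$ be fireworks and let $\alpha \in \supp(\mathfrak{G}_w)$ with $|\alpha| < \deg \mathfrak{G}_w$. Set $\beta = \mathrm{wt}(\overline{D(w)})$. By Theorem \ref{thm:fireworks-support}, $\beta \in \supp(\mathfrak{G}_w^{\mathrm{top}}) \subseteq \supp(\mathfrak{G}_w)$, and in particular $|\beta| = \deg \mathfrak{G}_w$. By Theorem \ref{thm:upwardsdivisibility}, $\alpha \leq \beta$ componentwise. Since $|\alpha| < |\beta|$, the inequality $\alpha \leq \beta$ is strict, so $\alpha < \beta$, which is exactly the witness required by Conjecture \ref{conj:1}.

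There is no serious obstacle here; the only real content, namely the fact that $\mathrm{wt}(\overline{D(w)}) \in \supp(\mathfrak{G}_w)$ for fireworks $w$, has already been verified via Lemma \ref{lem:fireworks-raj-equals-closure} and Theorem \ref{thm:pswleadingterm}. Thus the proof is a one-line deduction and no new machinery is needed.
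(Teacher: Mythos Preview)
Your proof is correct and is exactly the argument the paper intends: the paper does not even supply a separate proof block for Theorem~\ref{thm:fireworks}, treating it as immediate from Theorem~\ref{thm:fireworks-support} together with Theorem~\ref{thm:upwardsdivisibility}, just as you have written it out.
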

	
	We now make one other pair of conjectures, further specifying the poset structure of $\supp(\mathfrak{G}_w)$.
	
	\begin{namedconjecture}[\ref{conj:3}]
%		Suppose $\alpha,\gamma\in\mathrm{supp}(\mathfrak{G}_w)$ and $\alpha\leq \beta\leq \gamma$. Then $\beta\in\mathrm{supp}(\mathfrak{G}_w)$.
		Fix any $w\in S_n$. If $\alpha,\gamma\in\mathrm{supp}(\mathfrak{G}_w)$, then
		\[\left\{\beta\in \mathbb{Z}^n \mid \alpha\leq \beta\leq \gamma \right\} \subseteq \mathrm{supp}(\mathfrak{G}_w).  \]
	\end{namedconjecture}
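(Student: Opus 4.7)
The plan is to exploit the pipe dream formula of Theorem \ref{thm:pdformula}. The sign $(-1)^{\#\mathrm{crosses}(P) - \ell(w)}$ attached to a pipe dream $P$ depends only on $\#\mathrm{crosses}(P) = |\mathrm{wt}(P)|$, so all pipe dreams of a fixed weight contribute to $\mathfrak{G}_w$ with the same sign and no cancellation is possible. Consequently,
\[\supp(\mathfrak{G}_w) = \{\mathrm{wt}(P) : P \in \mathrm{PD}(w)\},\]
and Conjecture \ref{conj:3} becomes a purely combinatorial statement: if $\alpha = \mathrm{wt}(P_\alpha)$ and $\gamma = \mathrm{wt}(P_\gamma)$ for some $P_\alpha, P_\gamma \in \mathrm{PD}(w)$ and $\alpha \leq \beta \leq \gamma$ componentwise, then some $R \in \mathrm{PD}(w)$ satisfies $\mathrm{wt}(R) = \beta$.

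I would induct on $|\gamma| - |\beta|$; the base case $\beta = \gamma$ is immediate. For the inductive step, pick any row $i$ with $\beta_i < \gamma_i$ and aim to produce $P' \in \mathrm{PD}(w)$ with $\mathrm{wt}(P')_i = \gamma_i - 1$ and $\mathrm{wt}(P')_k \geq \beta_k$ for all $k \neq i$; the induction then closes on the interval $[\alpha, \mathrm{wt}(P')]$, since $\beta_i \leq \gamma_i - 1 = \mathrm{wt}(P')_i$. The easy subcase is when $P_\gamma$ has a second crossing in row $i$: erasing it yields a pipe dream of $w$ with the desired weight. The genuine combinatorial task is the subcase where every crossing in row $i$ of $P_\gamma$ is a first crossing, in which case one must exhibit a local move on $P_\gamma$ that decreases the row-$i$ weight by $1$ while preserving the underlying permutation $w$.

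The hypothesis to leverage in this hard subcase is that some pipe dream $P_\alpha$ of $w$ has $\alpha_i \leq \beta_i < \gamma_i$; morally this says there is ``room'' to descend in row $i$ from $P_\gamma$. One plausible route is to interpolate between $P_\gamma$ and $P_\alpha$ through a sequence of local moves inside the simplicial complex $\Delta_w$ of Theorem \ref{thm:pdcomplex} (which is a ball by Theorem \ref{thm:ball}), and to argue that along this path the row-$i$ weight must pass through every integer between $\alpha_i$ and $\gamma_i$ while the other row weights remain componentwise above $\beta$. Formalizing this path-connectivity argument is the main obstacle: it requires a precise dictionary between faces of $\Delta_w$ and pipe dreams of $w$, together with control over how each local move redistributes crosses across rows.

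A cleaner but more ambitious alternative is to prove Conjecture \ref{conj:4}, since the integer points of a generalized polymatroid are automatically order-convex. Indeed, if $\alpha \leq \beta \leq \gamma$ and both $\alpha, \gamma$ satisfy $y(I) \leq \sum_{i \in I} t_i \leq z(I)$ for all $I$, then so does $\beta$ by linearity in each coordinate sum. Hence SNP together with the generalized polymatroid property of $\mathrm{Newton}(\mathfrak{G}_w)$ would imply Conjecture \ref{conj:3}, and the superset containment in Theorem \ref{thm:superset} is a first step toward identifying an explicit paramodular pair.
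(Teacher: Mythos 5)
You should first be clear about the status of the statement: it is Conjecture \ref{conj:3} of the paper, which the paper does not prove in general. It is only verified computationally for $S_8$ and established for Grassmannian permutations, via Theorem \ref{thm:grass} together with the implication that Conjecture \ref{conj:4} implies Conjecture \ref{conj:3} (an implication you also correctly note, and which is indeed immediate from the defining inequalities $y(I)\leq \sum_{i\in I}t_i\leq z(I)$, since componentwise sandwiching of $\beta$ between $\alpha$ and $\gamma$ sandwiches every coordinate sum). Your opening reduction is also sound: since the sign $(-1)^{\#\mathrm{crosses}(P)-\ell(w)}$ depends only on $|\mathrm{wt}(P)|$, there is no cancellation among pipe dreams of equal weight, so $\supp(\mathfrak{G}_w)=\{\mathrm{wt}(P): P\in\mathrm{PD}(w)\}$ and the conjecture is equivalent to a statement about achievable weight vectors of (possibly nonreduced) pipe dreams.

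However, what you have written is not a proof, and the gap is exactly where you acknowledge it. In the inductive step, the subcase in which every crossing in row $i$ of $P_\gamma$ is a first crossing is the entire difficulty: you must produce a pipe dream of $w$ with one fewer cross in row $i$ without dropping below $\beta$ in the other rows, and no such local move is exhibited. The proposed interpolation inside $\Delta_w$ does not supply one: by Theorem \ref{thm:pdcomplex}, deleting a first crossing from a pipe dream of $w$ yields a pipe dream of some $v>w$, i.e.\ a boundary face of $\Delta_w$, so a path of faces joining $P_\gamma$ to $P_\alpha$ generally leaves $\mathrm{PD}(w)$ entirely; the fact that $\Delta_w$ is a ball (Theorem \ref{thm:ball}) gives connectivity of faces but no mechanism for staying among interior faces, let alone the claimed control that the row-$i$ weight passes through every intermediate value while the other rows stay above $\beta$ (note also that weights of interior faces along such a path need not interpolate monotonically, and intermediate weights need not even lie above $\alpha$). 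The alternative route, proving Conjecture \ref{conj:4}, is likewise open; Theorem \ref{thm:superset} only gives an outer bound on $\mathrm{Newton}(\mathfrak{G}_w)$ and does not produce a paramodular pair for it. So the proposal correctly organizes the problem and identifies valid reductions, but the essential combinatorial content of Conjecture \ref{conj:3} remains unproved.
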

	
	\begin{namedconjecture}[\ref{conj:4}]
		For all $w\in S_n$, $\mathfrak{G}_w$ has SNP and $\mathrm{Newton}(\mathfrak{G}_w)$ is a generalized polymatroid.
	\end{namedconjecture}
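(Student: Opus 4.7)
My plan is to deduce both halves of Conjecture~\ref{conj:4} from a single stronger statement: that the containment in Theorem~\ref{thm:superset} is an equality of lattice polytopes, with every lattice point of the right-hand side appearing in $\supp(\mathfrak{G}_w)$. Write $Q_w := \sum_{j=1}^n P_{\mathrm{sp}}(\mathrm{SM}_{d_j}(D_j))$. By Propositions~\ref{prop:matroidgps} and~\ref{prop:mink-sum}, $Q_w$ is a generalized polymatroid, and by Lemma~\ref{lem:integralgpoly} it is integral. If I can establish $\supp(\mathfrak{G}_w) = Q_w \cap \mathbb{Z}^n$, then $\mathrm{Newton}(\mathfrak{G}_w) = Q_w$ is a generalized polymatroid and SNP holds automatically. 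This reformulation is attractive because it fixes in advance the polytope the support should fill, rather than having to discover the paramodular pair from scratch.

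Theorem~\ref{thm:superset} already gives $\mathrm{Newton}(\mathfrak{G}_w) \subseteq Q_w$, so the task reduces to showing each lattice point of $Q_w$ lies in $\supp(\mathfrak{G}_w)$. I would argue by induction on total degree. The base case $\mathrm{Newton}(\mathfrak{S}_w)\cap \mathbb{Z}^n \subseteq \supp(\mathfrak{S}_w) \subseteq \supp(\mathfrak{G}_w)$ follows from Theorem~\ref{thm:fms} together with the known SNP of Schubert polynomials. For the inductive step, given $\alpha \in Q_w \cap \mathbb{Z}^n$ with $|\alpha|>\ell(w)$, I would prove a dual of Lemma~\ref{lem:downwardsdivisibility}: exhibit an index $i$ with $\alpha - e_i \in Q_w \cap \mathbb{Z}^n$ (so $\alpha - e_i \in \supp(\mathfrak{G}_w)$ by induction) and lift a pipe dream of weight $\alpha - e_i$ to one of weight $\alpha$ by inserting a second crossing in row $i$, in a way whose signed contribution to the coefficient $C_{w\alpha}$ does not cancel. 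The existence of such an $i$ should follow from the submodular/supermodular description of $Q_w$ inherited from the paramodular pairs of the summands; the lifting of a pipe dream is combinatorially flexible since row $i$ of $\overline{D(w)}$ contains ample room for second crossings.

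The main obstacle is precisely this non-cancellation step, since the pipe dream formula (Theorem~\ref{thm:pdformula}) is signed and two lifted pipe dreams can easily contribute with opposite parity. I would attack this topologically using the Knutson--Miller ball structure: the coefficient $C_{w\alpha}$ equals, up to an overall sign, an alternating count of faces of $\Delta_w$ of weight $\alpha$, which I would try to identify with the reduced Euler characteristic of a subcomplex of $\Delta_w$ built from such faces. Since $\Delta_w$ is itself a ball by Theorem~\ref{thm:ball}, one hopes to produce, for each $\alpha \in Q_w \cap \mathbb{Z}^n$, a \emph{canonical} weight-$\alpha$ pipe dream that serves as a cone point of this subcomplex, obtained by greedily filling crossings into $\overline{D(w)}$ in a prescribed lexicographic order until the row sums equal $\alpha$. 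Making this topological argument precise for arbitrary $w$ is where I expect most of the work to lie; the fireworks and Grassmannian special cases proved in the paper should serve as templates to calibrate the construction, and Theorem~\ref{thm:fireworks-support} in particular suggests that the greedy construction naturally targets the unique maximum of $\supp(\mathfrak{G}_w^{\mathrm{top}})$ in the fireworks case, giving a sanity check for the general proposal.
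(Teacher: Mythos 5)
First, note that the statement you are proving is one of the paper's open conjectures: the paper establishes it only for Grassmannian permutations (Theorem \ref{thm:grass}), by using the explicit graded description of the support from Theorem \ref{thm:escobar-yong} and writing down a concrete paramodular pair $(y,z)$ built from $\lambda$ and $\mu^{(N)}$. So there is no general proof to match your proposal against; the question is whether your plan could plausibly close the general case, and it cannot in its present form.

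The central reduction you propose is false. You want to show $\supp(\mathfrak{G}_w)=Q_w\cap\mathbb{Z}^n$ where $Q_w=\sum_{j=1}^n P_{\mathrm{sp}}(\mathrm{SM}_{d_j}(D_j))$, i.e.\ that the containment of Theorem \ref{thm:superset} is an equality saturated by the support. But $\mathrm{wt}(\overline{D(w)})$ always lies in $Q_w$ (it is the sum of the indicator vectors of the full ground sets $[d_j]$, each of which is a spanning set), and it has total degree $\#\overline{D(w)}$, whereas $\deg\mathfrak{G}_w=|\rajcode(w)|$ by Theorem \ref{thm:pswleadingterm}; these differ in general. The paper's own running example already kills the plan: for $w=15324$ one has $\deg\mathfrak{G}_w=6$ while $\#\overline{D(w)}=7$, so the lattice point $(3,3,1,0,0)\in Q_w$ is not in $\supp(\mathfrak{G}_w)$ and $\mathrm{Newton}(\mathfrak{G}_w)\subsetneq Q_w$. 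Proposition \ref{prop:converse} records exactly this obstruction: equality with $Q_w$ forces $\deg\mathfrak{G}_w=\#\overline{D(w)}$, which holds for fireworks permutations (Lemma \ref{lem:fireworks-raj-equals-closure}) but not in general. Consequently your induction must fail before reaching the top degrees of $Q_w$ --- no ``dual of Lemma \ref{lem:downwardsdivisibility}'' inside $Q_w$ can be true --- independently of the non-cancellation issue for signed pipe dream contributions, which you correctly flag but leave entirely open. To salvage the strategy you would have to replace $Q_w$ by a smaller candidate polytope whose top-degree slice matches $\supp(\mathfrak{G}_w^{\mathrm{top}})$ (as the paper does, via $\mu^{(N)}$, only in the Grassmannian case), and only then attempt a saturation argument.
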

	
	\begin{remark}
		The assertion that $\mathfrak{G}_w$ always has SNP is \cite[Conjecture 5.5]{MTY}. 
	\end{remark}
%	We have tested Conjecture \ref{conj:4} for all permutations $w\in S_8$. 
	We record two implications between Conjectures \ref{conj:1}--\ref{conj:4}:
	\begin{itemize}
		\item Conjecture \ref{conj:4} implies Conjecture \ref{conj:3} via the defining inequalities of generalized polymatroids (Definition \ref{def:gpolymatroid}). 
		\item Conjectures \ref{conj:1} and \ref{conj:3} together imply Conjecture \ref{conj:2}.
	\end{itemize}
	We also note that via a property of generalized polymatroids (\cite[Theorem 14.2.5]{frank}), Conjecture \ref{conj:4} is implied by \cite[Conjecture 22]{schlorentzian}. Conjecture \ref{conj:4} is a strengthening of \cite[Conjecture 5.1]{genpermflows}.
	
%	We point out that Conjectures \ref{conj:2} and \ref{conj:3} can be readily used to (conjecturally) give an algorithmic answer to the following question posed by Anna Weigandt \cite{cascade}: 
%	\begin{center}
%		For any term order satisfying $x_1<x_2<\cdots<x_n$, what is the leading term of each homogeneous component of a Grothendieck polynomial?
%	\end{center}
 	
 	We prove Conjectures \ref{conj:2} and \ref{conj:4} for Grassmannian permutations. We begin by reviewing the main result of \cite{symmgrothnewton}. Recall a permutation $w$ is \emph{Grassmannian} if $w$ has exactly one descent. It is well-known that Grassmannian permutations are in bijection with partitions: when $w$ has a descent at position $r$, the corresponding partition $\lambda$ is given by $\lambda=(w(r)-r,\cdots,w(2)-2,w(1)-1)$.
 	
 	Let $\lambda\in\mathbb{Z}^n$ be a partition and consider the Young diagram of $\lambda$ in English notation. Set $\mu^{(0)} = \lambda$. For $j\geq 1$, define $\mu^{(j)}$ to be $\mu^{(j-1)}$ with a box added to the northmost row $r$ such that the addition still yields a partition, and $\mu^{(j-1)}_r-\mu^{(0)}_r<r-1$. Stop when no such box exists. Let the resulting partitions be $\mu^{(0)},\ldots,\mu^{(N)}$. For Grassmannian $w\in S_n$ corresponding to $\lambda$, define $\mathrm{Par}(w)=\{\mu^{(0)},\ldots,\mu^{(N)}\}$, the partitions constructed from $\lambda$. Recall that \emph{dominance order} on partitions is defined by $\rho\trianglelefteq \nu$ if $\rho_1+\cdots+\rho_{i}\leq \nu_1+\cdots+\nu_i \mbox{ for all } i \mbox{ and } |\rho|=|\nu|$.
 	
 	\begin{example}
 		Let $\lambda=(5,5,1,1)$. Then we obtain the sequence of partitions
 		\begin{center}
 			\includegraphics{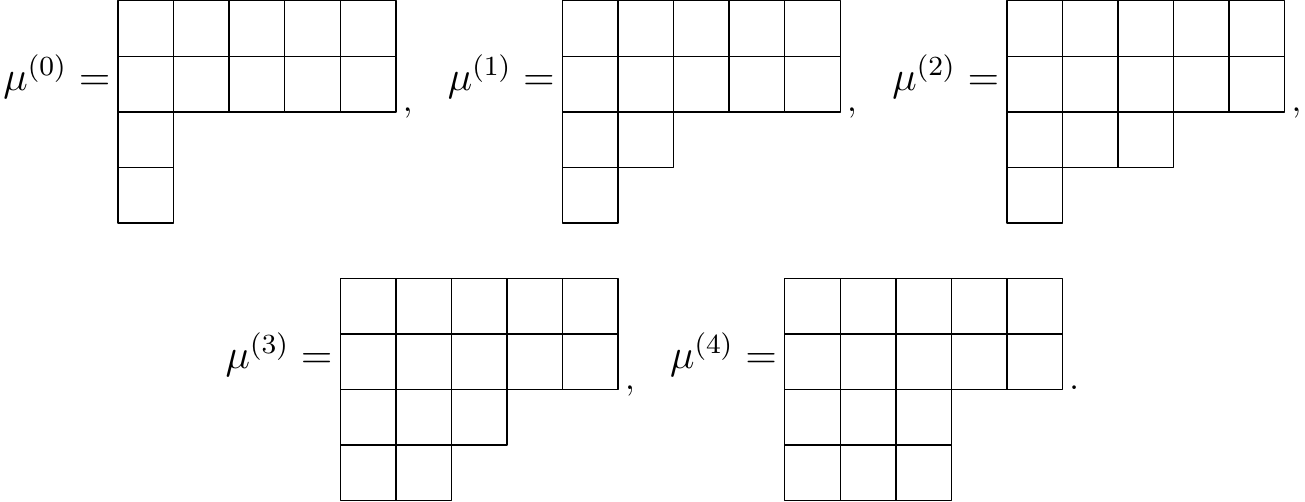}
 		\end{center}
 	\end{example}
 	
 	\begin{theorem}[{\cite{symmgrothnewton}}]
 		\label{thm:escobar-yong}
 		Suppose $w$ is a Grassmannian permutation with $\mathrm{Par}(w)=\{\mu^{(0)},\ldots,\mu^{(N)} \}$. Then $\deg\mathfrak{G}_w = |\mu^{(N)}|$, and for $0\leq j\leq N$, the support of the $(\ell(w)+j)$th degree homogeneous component of $\mathfrak{G}_w$ is exactly
 		\[\{\alpha\in\mathbb{Z}^n \mid \alpha\geq 0 \mbox{ and } \alpha\trianglelefteq\mu^{(j)}\}. \]
 		In particular, $\mathfrak{G}_w$ has SNP.
 	\end{theorem}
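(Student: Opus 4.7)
Since $w$ is Grassmannian with descent at position $r$, the Grothendieck polynomial $\mathfrak{G}_w$ coincides with the symmetric Grothendieck polynomial $G_\lambda(x_1,\ldots,x_r)$. The starting point is Lenart's expansion
\[
G_\lambda \;=\; \sum_{\mu} (-1)^{|\mu/\lambda|}\, f_{\lambda\mu}\, s_\mu(x_1,\ldots,x_r),
\]
where $f_{\lambda\mu}$ counts increasing tableaux of skew shape $\mu/\lambda$ whose row-$i$ entries are strictly less than $i$. A pigeonhole argument identifies the shapes with $f_{\lambda\mu}>0$ as precisely the \emph{valid} partitions $\mu \supseteq \lambda$ with at most $r$ parts and $\mu_i - \lambda_i \leq i-1$ for every $i$; this condition mirrors the admissibility rule defining $\mathrm{Par}(w)$.

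The key observation is that the degree-$(|\lambda|+j)$ homogeneous component of $G_\lambda$ receives contributions only from valid $\mu$ with $|\mu/\lambda|=j$, all carrying the common sign $(-1)^j$. Hence no cancellation can occur within a fixed degree, and this component's support equals $\bigcup_\mu \supp(s_\mu)$ over valid $\mu$ with $|\mu/\lambda|=j$. Each summand is governed by the classical description $\supp(s_\mu) = \{\alpha \in \mathbb{Z}_{\geq 0}^r : \alpha \trianglelefteq \mu\}$, which follows from positivity of Kostka numbers.

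Next I would verify that $\mu^{(j)}$ is the dominance-maximum valid partition of size $|\lambda|+j$. Dominance order rewards placing boxes as far north as possible, and the greedy construction of $\mathrm{Par}(w)$ does exactly this. Crucially, the algorithm adds boxes to a \emph{nondecreasing} sequence of rows: adding a box to a row cannot restore admissibility to any strictly earlier row, so once a row is inadmissible it remains so. Consequently the support of the degree-$(|\lambda|+j)$ component of $G_\lambda$ equals $\{\alpha \in \mathbb{Z}_{\geq 0}^r : \alpha \trianglelefteq \mu^{(j)}\}$, and $\deg \mathfrak{G}_w = |\mu^{(N)}|$ is immediate.

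For SNP, the same row-monotonicity forces $j \mapsto \mu^{(j)}_1 + \cdots + \mu^{(j)}_i$ to be concave for each $i$, since its discrete derivative drops from $1$ to $0$ once and never returns. Given any lattice point $\alpha \in \mathrm{Newton}(G_\lambda)$, write $\alpha = \sum_s t_s \beta^{(s)}$ as a convex combination of support points $\beta^{(s)}$ at levels $j_s$; for each $S \subseteq [r]$ with $|S|=i$, Jensen's inequality applied to this concavity gives
\[
\sum_{k \in S} \alpha_k \;\leq\; \sum_s t_s \sum_{k=1}^{i} \mu^{(j_s)}_k \;\leq\; \sum_{k=1}^{i} \mu^{(j)}_k,
\]
where $j = |\alpha|-|\lambda| = \sum_s t_s j_s$. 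Hence $\alpha \trianglelefteq \mu^{(j)}$, placing $\alpha$ in the support. The main obstacle is establishing this concavity cleanly; once in hand, the rest is routine bookkeeping.
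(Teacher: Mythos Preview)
The paper does not prove this theorem; it is quoted from \cite{symmgrothnewton} (Escobar--Yong) and used as a black box in the proof of Theorem~\ref{thm:grass}. So there is no in-paper argument to compare against.

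That said, your sketch is essentially the Escobar--Yong argument. Lenart's Schur expansion, the identification of the contributing shapes as the ``valid'' partitions $\mu\supseteq\lambda$ with $\mu_i-\lambda_i\le i-1$, the sign-coherence within each degree, and the reduction to $\supp(s_\mu)=\{\alpha\ge 0:\alpha\trianglelefteq\mu\}$ are exactly the ingredients they use. Your row-monotonicity observation (once the greedy algorithm leaves a row it never returns) is the right mechanism for showing $\mu^{(j)}$ is dominance-maximal among valid partitions of its size; the clean way to finish that step is to note that $\mu^{(N)}_k=\min(\lambda_k+k-1,\mu^{(N)}_{k-1})$ for every $k$, whence $\nu_k\le\mu^{(N)}_k$ for any valid $\nu$ by induction, and then split on whether the prefix length $i$ is below or above the currently-filling row. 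The concavity-plus-Jensen argument for SNP is correct and is a slightly slicker packaging than the direct verification in \cite{symmgrothnewton}; the concavity itself is immediate from row-monotonicity, since the discrete derivative of $j\mapsto\mu^{(j)}_1+\cdots+\mu^{(j)}_i$ is $1$ until the active row passes $i$ and $0$ thereafter. Nothing here is wrong, only a bit compressed.
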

 
 	\begin{theorem}
 		\label{thm:grass}
 		Conjecture \ref{conj:4} and Conjecture \ref{conj:2} hold when $w$ is a Grassmannian permutation.
 	\end{theorem}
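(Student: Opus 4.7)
The plan is to identify $\mathrm{Newton}(\mathfrak{G}_w)$ explicitly as a generalized polymatroid, from which both conjectures follow. Set $\phi(k) = \lambda_{n-k+1}+\cdots+\lambda_n$ and $\psi(k) = \mu^{(N)}_1+\cdots+\mu^{(N)}_k$, and define
\[
Q := \{t \in \mathbb{R}^n : \phi(|I|) \leq \textstyle\sum_{i \in I} t_i \leq \psi(|I|) \textup{ for every } I \subseteq [n]\}.
\]
SNP of $\mathfrak{G}_w$ is immediate from Theorem \ref{thm:escobar-yong}, so it suffices to prove $Q = \mathrm{Newton}(\mathfrak{G}_w)$ and that $(y, z) := (\phi(|I|), \psi(|I|))$ is paramodular. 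Submodularity of $z$ and supermodularity of $y$ follow from concavity of $\psi$ and convexity of $\phi$ in $k$, which are immediate because $\mu^{(N)}$ and $\lambda$ are partitions. The paramodular cross-inequality, with $a = |I|$, $b = |J|$, $c = |I \cap J|$, compares $\psi(a) - \psi(a-c) = \sum_{k=a-c+1}^{a}\mu^{(N)}_k$ with $\phi(b) - \phi(b-c) = \sum_{k=n-b+1}^{n-b+c}\lambda_k$; the index shift afforded by $a + b \leq n + c$ together with $\mu^{(N)} \geq \lambda$ reduces it to a termwise check using the partition property.

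For $\mathrm{Newton}(\mathfrak{G}_w) \subseteq Q$, Theorem \ref{thm:escobar-yong} says each $\alpha \in \supp(\mathfrak{G}_w)$ satisfies $\alpha^+ \trianglelefteq \mu^{(j)}$ for some $j$, and the standard upper and lower facet inequalities for sorted dominance by a partition, combined with $\mu^{(N)} \geq \mu^{(j)} \geq \lambda$, yield both the $\psi$- and $\phi$-bounds. Integrality of $Q$ (Lemma \ref{lem:integralgpoly}) and SNP reduce the reverse inclusion to showing $Q \cap \mathbb{Z}^n \subseteq \supp(\mathfrak{G}_w)$. The key combinatorial input I would prove is that the rows $r^{(1)}, \ldots, r^{(N)}$ successively added in the construction of $\mathrm{Par}(w)$ form a weakly increasing sequence, which follows by a short induction on $j$ since any row smaller than the one modified at step $j$ was already ineligible and remains so. Weak monotonicity forces the identity
\[
\mu^{(j)}_1 + \cdots + \mu^{(j)}_k = \min\big(\lambda_1 + \cdots + \lambda_k + j,\; \mu^{(N)}_1 + \cdots + \mu^{(N)}_k\big).
\]
Given integer $t \in Q$ with $|t| = \ell(w) + j$, applying the $\psi$-upper bound to the top-$k$ index set of $t$ and the $\phi$-lower bound to its complement produces exactly the two quantities in this minimum, giving $t^+_1 + \cdots + t^+_k \leq \mu^{(j)}_1 + \cdots + \mu^{(j)}_k$ for every $k$. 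Together with $|t| = |\mu^{(j)}|$ this certifies $t^+ \trianglelefteq \mu^{(j)}$, so $t \in \supp(\mathfrak{G}_w)$ by Theorem \ref{thm:escobar-yong}. This proves $Q = \mathrm{Newton}(\mathfrak{G}_w)$ and hence Conjecture \ref{conj:4}.

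Conjecture \ref{conj:2} then follows from the generalized polymatroid structure of $Q$. Given $\alpha \in \supp(\mathfrak{G}_w)$ with $|\alpha| < \deg\mathfrak{G}_w = \psi(n)$, the $\psi$-tight sets $\{I : \sum_{i \in I}\alpha_i = \psi(|I|)\}$ are closed under union by submodularity, hence have a maximum $T^*$; since $[n]$ is not tight, $T^* \subsetneq [n]$. For any $i \in [n] \setminus T^*$, the point $\beta := \alpha + e_i$ satisfies every $\psi$-upper bound (because no $I \ni i$ is tight) and trivially every $\phi$-lower bound, placing $\beta \in Q \cap \mathbb{Z}^n = \supp(\mathfrak{G}_w)$ with $\beta > \alpha$ and $|\beta| = |\alpha|+1$.

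The main obstacle I foresee is the inclusion $Q \cap \mathbb{Z}^n \subseteq \supp(\mathfrak{G}_w)$: the $\phi$-lower bound is phrased in terms of $\lambda$ rather than $\mu^{(j)}$, so recovering the sharper dominance $t^+ \trianglelefteq \mu^{(j)}$ at weight $|t| = \ell(w)+j$ requires genuine combinatorial input about $\mathrm{Par}(w)$ beyond formal generalized-polymatroid manipulation, namely the weakly-increasing-row observation.
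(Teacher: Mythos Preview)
Your argument for Conjecture~\ref{conj:4} is essentially the paper's: the same $Q$, the same paramodular pair $(y,z)=(\phi(|\cdot|),\psi(|\cdot|))$, the same reduction of the reverse inclusion to checking dominance $t\trianglelefteq\mu^{(j)}$ via Theorem~\ref{thm:escobar-yong}. Your explicit ``weakly increasing rows'' observation, yielding
\[
\mu^{(j)}_1+\cdots+\mu^{(j)}_k=\min\bigl(\lambda_1+\cdots+\lambda_k+j,\ \mu^{(N)}_1+\cdots+\mu^{(N)}_k\bigr),
\]
is a careful way to organize the two cases coming from inequalities~\eqref{eq:1} and~\eqref{eq:2}; the paper records both inequalities but then presents only the case handled by~\eqref{eq:1}, so your write-up is arguably more complete on this point.

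For Conjecture~\ref{conj:2} you take a genuinely different route. The paper gives a one-line direct argument: if $|\alpha|=\ell(w)+j$ and $\mu^{(j+1)}-\mu^{(j)}=e_i$, then $\alpha+e_i\trianglelefteq\mu^{(j+1)}$ follows termwise from $\alpha\trianglelefteq\mu^{(j)}$. Your submodular tight-set argument also works and is pleasantly structural (it would apply verbatim to any integral generalized polymatroid with SNP), but it is more machinery than the situation requires; the paper's choice of direction $e_i$ is canonical and avoids invoking $Q$ altogether for this half of the theorem.
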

 	\begin{proof}
 		Let $\mathrm{Par}(w)=\{\mu^{(0)},\ldots,\mu^{(N)} \}$ with $\lambda=\mu^{(0)}$. We first confirm Conjecture \ref{conj:2}. Let $\alpha\in\supp(\mathfrak{G}_w)$ with $|\alpha|=\ell(w)+j<\deg \mathfrak{G}_w$. By Theorem \ref{thm:escobar-yong}, $\alpha\trianglelefteq \mu^{(j)}$. Let $e_i$ be the standard basis vector such that $\mu^{(j+1)}-\mu^{(j)} = e_i$. Then $\alpha+e_i\trianglelefteq \mu^{(j+1)}$, so $\alpha+e_i\in\supp(\mathfrak{G}_w)$. Hence $\beta=\alpha+e_i$ confirms Conjecture \ref{conj:2}.
 		
 		We now confirm Conjecture \ref{conj:4}. Define functions $y,z:2^{[n]}\to\mathbb{R}$ by
 		\[y(I) = \lambda_n + \cdots + \lambda_{n-\#I+1 }\quad \mbox{and}\quad z(I)=\mu^{(N)}_1+\cdots\mu^{(N)}_{\#I}.\]
 		It is straightforward to check that $(y,z)$ is a paramodular pair. Let $Q$ be the corresponding generalized polymatroid
 		\[Q=\left\{t\in \mathbb{R}^n \ \middle| \ y(I)\leq \sum_{i\in I} t_i\leq z(I) \mbox{ for all } I\subseteq [n]\right\}. \]
 		Observe that 
 		\[\min_{t\in\mathrm{Newton}(\mathfrak{G}_w)} \sum_{i\in I}t_i = \lambda_n + \cdots + \lambda_{n-\#I+1 } \quad \mbox{and}\quad \max_{t\in\mathrm{Newton}(\mathfrak{G}_w)} \sum_{i\in I}t_i = \mu^{(N)}_1+\cdots+\mu^{(N)}_{\#I},\]
 		so $\mathrm{Newton}(\mathfrak{G}_w) \subseteq Q$. We prove the opposite inclusion.
 		
 		Let $q\in Q\cap\mathbb{Z}^n$ and $|q| = |\lambda|+j$. We must show $q\trianglelefteq \mu^{(j)}$. We have
 		\[y(I)\leq \sum_{i\in I} q_i=\sum_{i=1}^n q_i-\sum_{i\notin I}q_i=|\lambda|+j -\sum_{i\notin I}q_i, \] 
 		so that
 		\[\sum_{i\notin I}q_i\leq \lambda_1+\cdots+\lambda_{n-\#I}+j =\lambda_1+\cdots+\lambda_{\#([n]\setminus I)} +j. \]
 		Replacing $[n]\setminus I$ with $I$, we see
 		\begin{equation}
 			\label{eq:1}
 			\sum_{i\in I}q_i\leq \lambda_1+\cdots+\lambda_{\#I} +j \mbox{ for all } I\subseteq [n].
 		\end{equation}
 		Set $\alpha$ to be the vector $\alpha=\mu^{(N)}-\lambda$, so 
 		\begin{equation}
 			\label{eq:2}
 			\sum_{i\in I}q_i\leq z(I)=(\lambda_1+\alpha_1)+\cdots+(\lambda_{\#I}+\alpha_{\#I}) \mbox{ for all } I\subseteq [n].
 		\end{equation}
 		
% 		Fix any $k\in [n]$. If $j\geq \alpha_1+\cdots+\alpha_k$, then
% 		\[\mu^{(j)}_1+\cdots+\mu^{(j)}_k= \mu^{(N)}_1+\cdots+\mu^{(N)}_k = (\lambda_1+\alpha_1)+\cdots+(\lambda_k+\alpha_k), \]
% 		so (\ref{eq:2}) implies
% 		\[q_1+\cdots+q_k \leq \mu^{(j)}_1+\cdots+\mu^{(j)}_k. \]
 		
 		Fix any $k\in [n]$. Note that $j\leq \alpha_1+\cdots+\alpha_k$. Then (\ref{eq:1}) shows
 		\[\mu^{(j)}_1+\cdots+\mu^{(j)}_k= \lambda_1+\cdots+\lambda_k+j\geq q_1+\cdots+q_k. \]
 		Hence, we have shown $q\trianglelefteq \mu^{(j)}$.
 	\end{proof}
	
	The following characterization of equality in Theorem \ref{thm:superset} would follow from Conjectures \ref{conj:1} and \ref{conj:4}.
	\begin{proposition}
		\label{prop:converse}
		Let $w\in S_n$ be any permutation and let $D(w)$ have columns $D_1,\ldots,D_n$. Set $d_j = \max(D_j)$, taking $\max(\emptyset)=0$. Assuming Conjectures \ref{conj:1} and \ref{conj:4} hold, it follows that 
		\[\deg \mathfrak{G}_w=\#\overline{D(w)} \mbox{ if and only if } \mathrm{Newton}(\mathfrak{G}_w) = \sum_{j=1}^n P_{\mathrm{sp}}(\mathrm{SM}_{d_j}(D_j)).\]
	\end{proposition}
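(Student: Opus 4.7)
The plan is to show that $\mathrm{Newton}(\mathfrak{G}_w)$ and $Q := \sum_{j=1}^n P_{\mathrm{sp}}(\mathrm{SM}_{d_j}(D_j))$ are generalized polymatroids sharing the same paramodular pair $(y,z)$; by the uniqueness of $(y,z)$ in Lemma \ref{lem:recovery}, this will force the polytope identity $\mathrm{Newton}(\mathfrak{G}_w) = Q$. Both sides are already known to be generalized polymatroids: $\mathrm{Newton}(\mathfrak{G}_w)$ by Conjecture \ref{conj:4}, and $Q$ by Proposition \ref{prop:mink-sum}. Throughout, I write $\delta := \mathrm{wt}(\overline{D(w)})$; a short check shows $\delta$ is the unique componentwise-maximum point of $Q$, with total weight $|\delta| = \#\overline{D(w)}$.

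The forward direction $(\Leftarrow)$ is immediate. If $\mathrm{Newton}(\mathfrak{G}_w) = Q$, then $\delta$---being the unique maximizer of $\sum_i t_i$ over $Q$---is a vertex of the Newton polytope, hence lies in $\supp(\mathfrak{G}_w)$. Thus $\deg \mathfrak{G}_w \geq |\delta| = \#\overline{D(w)}$, and Theorem \ref{thm:upwardsdivisibility} supplies the reverse inequality.

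For the reverse direction $(\Rightarrow)$, I would first confirm $\delta \in \supp(\mathfrak{G}_w)$: any top-degree $\alpha \in \supp(\mathfrak{G}_w)$ satisfies $\alpha \leq \delta$ by Theorem \ref{thm:upwardsdivisibility} and $|\alpha| = \deg \mathfrak{G}_w = |\delta|$ by hypothesis, so $\alpha = \delta$. To match the $z$ functions: componentwise maximality of $\delta$ in $Q$ gives $z_Q(I) = \sum_{i \in I} \delta_i$ for every $I \subseteq [n]$, and the containment $\delta \in \mathrm{Newton}(\mathfrak{G}_w) \subseteq Q$ (the latter from Theorem \ref{thm:superset}) yields the same formula for $z_{\mathrm{Newton}(\mathfrak{G}_w)}$. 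To match the $y$ functions, I would sandwich $\mathrm{Newton}(\mathfrak{S}_w) \subseteq \mathrm{Newton}(\mathfrak{G}_w) \subseteq Q$ and argue $y_{\mathrm{Newton}(\mathfrak{S}_w)} = y_Q$: using the Minkowski-sum decomposition on both sides, both equal $\sum_j \min_B |B \cap I|$, with $B$ ranging over bases of the relevant Schubert matroid---for $y_Q$ via the standard observation $\min_{S\text{ spanning}} |S \cap I| = \min_{B\text{ basis}} |B \cap I|$, and for $y_{\mathrm{Newton}(\mathfrak{S}_w)}$ directly from Theorem \ref{thm:fms}. The bases of $\mathrm{SM}_n(D_j)$ and $\mathrm{SM}_{d_j}(D_j)$ coincide (elements above $d_j$ are loops of $\mathrm{SM}_n(D_j)$), so the two expressions agree. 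Because $\mathrm{Newton}(\mathfrak{S}_w) \subseteq \mathrm{Newton}(\mathfrak{G}_w) \subseteq Q$ forces $y_{\mathrm{Newton}(\mathfrak{S}_w)} \geq y_{\mathrm{Newton}(\mathfrak{G}_w)} \geq y_Q$, the outer equality propagates to the middle, and Lemma \ref{lem:recovery} finishes the proof.

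The main obstacle will be the careful bookkeeping in the $y$-function step: the reduction from spanning sets to bases, and the cross-identification of the two Schubert matroid presentations. Both are routine but easy to get wrong. Conjecture \ref{conj:1} is not explicitly invoked in this outline---the polymatroid half of Conjecture \ref{conj:4} does the heavy lifting, with componentwise maximality of $\delta$ in $Q$ supplying the missing input---though under the hypotheses it is a consequence rather than an extra assumption.
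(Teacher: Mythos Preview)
Your proposal is correct and follows the paper's overall strategy: show that $\mathrm{Newton}(\mathfrak{G}_w)$ and $Q$ are generalized polymatroids with the same paramodular pair, then invoke Lemma~\ref{lem:recovery}. The mechanics differ slightly, and in a way that is worth recording. The paper first deduces Conjecture~\ref{conj:2} from Conjectures~\ref{conj:1} and~\ref{conj:4}, then uses it twice---once to push the maximum over $Q$ up to $\delta$, and once to push the maximum over $\mathrm{Newton}(\mathfrak{G}_w)$ up to $\mathrm{Newton}(\mathfrak{G}_w^{\mathrm{top}})$. You bypass both steps by observing directly that $\delta$ is the componentwise maximum of $Q$ (each summand $P_{\mathrm{sp}}(\mathrm{SM}_{d_j}(D_j))$ has $\mathbf{1}_{[d_j]}$ as its unique componentwise maximum) and simultaneously the componentwise maximum of $\mathrm{Newton}(\mathfrak{G}_w)$ (Theorem~\ref{thm:upwardsdivisibility} plus $\delta\in\supp(\mathfrak{G}_w)$). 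For the $y$-side, your sandwich $\mathrm{Newton}(\mathfrak{S}_w)\subseteq\mathrm{Newton}(\mathfrak{G}_w)\subseteq Q$ together with the identification of bases of $\mathrm{SM}_n(D_j)$ and $\mathrm{SM}_{d_j}(D_j)$ is exactly the content of the paper's chain of equalities, just repackaged. The net effect is that your argument never actually calls on Conjecture~\ref{conj:1}: only the generalized-polymatroid half of Conjecture~\ref{conj:4} is used, as you noted. This is a modest but genuine sharpening of the proposition's hypotheses.
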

	\begin{proof}
		Since, Conjectures \ref{conj:1} and \ref{conj:3} together imply Conjecture \ref{conj:2}, we are assuming Conjecture \ref{conj:2} holds as well. Suppose first that the polyhedral equality holds. By linearity,
		\[\mathrm{Newton}(\mathfrak{G}_w^{\mathrm{top}}) = \{\mathrm{wt}(\overline{D(w)})\}. \]
		Thus $\deg\mathfrak{G}_w=\#\overline{D(w)}$.
		
		Conversely, assume $\deg\mathfrak{G}_w = \#\overline{D(w)}$. By Theorem \ref{thm:upwardsdivisibility}, we have $\mathrm{Newton}(\mathfrak{G}_w^{\mathrm{top}}) = \{\mathrm{wt}(\overline{D(w)})\}$. 
		Since we are assuming Conjecture \ref{conj:4}, $\mathfrak{G}_w$ has SNP and $\mathrm{Newton}(\mathfrak{G}_w)$ is a generalized polymatroid. 
				
		Set $Q=\sum_{j=1}^n P_{\mathrm{sp}}(\mathrm{SM}_{d_j}(D_j))$. By Propositions \ref{prop:mink-sum} and \ref{prop:matroidgps}, $Q$ is a generalized polymatroid. 
		Denote its associated paramodular pair by $(y,z)$. Observe that the integer points of $P_{\mathrm{sp}}(\mathrm{SM}_{d_j}(D_j))$ satisfy the conclusions of Lemma \ref{lem:downwardsdivisibility} and Conjecture \ref{conj:2}. Then, Lemmas \ref{lem:recovery} and \ref{lem:integralgpoly} imply
		\begin{align*}
		y(I) &= \min\left\{\sum_{i\in I}q_i\ \ \middle|\ q\in Q\cap \mathbb{Z}^n \right\} 
		  =\min\left\{\sum_{i\in I}q_i\ \ \middle|\ q\in \mathbb{Z}^n \cap \sum_{j=1}^nP(\mathrm{SM}_n(D_j)) \right\}\\
		  &=\min\left\{\sum_{i\in I}q_i\ \ \middle|\ q\in \mathbb{Z}^n \cap \mathrm{Newton}(\mathfrak{S}_w)\right\}
		  =\min\left\{\sum_{i\in I}q_i\ \ \middle|\ q\in \mathrm{Newton}(\mathfrak{G}_w) \right\}.
		\end{align*}
		Similarly, Lemmas \ref{lem:recovery} and \ref{lem:integralgpoly} together with Conjecture \ref{conj:2} imply
		\begin{align*}
		z(I) &= \max\left\{\sum_{i\in I}q_i\ \ \middle|\ q\in Q\cap \mathbb{Z}^n\right\}
		   =\max\left\{\sum_{i\in I}q_i\ \ \middle|\ q=\mathrm{wt}(\overline{D(w)}) \right\}\\
		   &=\max\left\{\sum_{i\in I}q_i\ \ \middle|\ q\in \mathbb{Z}^n\cap \mathrm{Newton}(\mathfrak{G}_w^{\mathrm{top}}) \right\}
		   =\max\left\{\sum_{i\in I}q_i\ \ \middle|\ q\in \mathrm{Newton}(\mathfrak{G}_w) \right\}.
		\end{align*}
		Thus, the paramodular pairs of $Q$ and $\mathrm{Newton}(\mathfrak{G}_w)$ coincide. Consequently, $Q=\mathrm{Newton}(\mathfrak{G}_w)$.
	\end{proof}

	\begin{remark} 
		We note that all of our conjectures (\ref{conj:1}--\ref{conj:coeff}) naturally lift to double Grothendieck polynomials, and appear to hold there as well.	
	\end{remark}

	\section{Coefficients and pricipal specialization of Grothendieck polynomials}
	\label{sec:mobius}
	For each permutation $w$, we describe a poset $P_w$. For certain permutations, we connect the M\"obius function of $P_w$ to the coefficients of the Grothendieck polynomial $\mathfrak{G}_w$. Recall the \emph{M\"obius function} $\mu$ of a finite poset $P$ is the unique function $P\times P\to \mathbb{Z}$ defined by 
	\[\mu(p,q) = \begin{cases}
		0 &\mbox{ if } p\nleq q,\\
		1 &\mbox{ if } p=q,\\
		-\displaystyle\sum_{p\leq r<q}\mu(p,r)&\mbox{ if }p<q.
	\end{cases} \]
%	Let $P_w$ be the poset $\supp(\mathfrak{G}_w)$ (under componentwise comparison) together with a minimum element $\hat 0$. 
	\begin{definition}
		\label{def:poset}
		For each $w\in S_n$, define $P_w$ to be the poset 
		\[\{\beta\in\mathbb{Z}^n \mid \alpha \leq \beta \leq \mathrm{wt}(\overline{D(w)}) \mbox{ for some } \alpha\in \supp(\mathfrak{G}_w)\},\]
		together with a minimum element denoted $\hat{0}$.
	\end{definition}
%	Observe that $P_w\subseteq Q_w$ by Theorem \ref{thm:upwardsdivisibility}.
	
	\begin{namedconjecture}[\ref{conj:mobius}]
		Let $w$ be a permutation such that all nonzero coefficients of $\mathfrak{S}_w$ equal 1. If $\mu_{w}$ is the M\"obius function of $P_w$, then
		\[\mathfrak{G}_w=-\sum_{\beta\in P_w-\hat{0}}\mu_w(\hat{0}, \beta)x^{\beta}.\]
	\end{namedconjecture}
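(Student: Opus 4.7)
The plan is to reduce Conjecture \ref{conj:mobius} to a cumulative-sum identity via M\"obius inversion, and then to verify that identity by a signed count over pipe dreams. Let $C_{w\alpha}$ denote the coefficient of $x^\alpha$ in $\mathfrak{G}_w$ (taken as $0$ when $\alpha\notin\supp(\mathfrak{G}_w)$). Standard M\"obius inversion on each interval $[\hat{0},\beta]$ of $P_w$, combined with induction on $\beta$, shows Conjecture \ref{conj:mobius} is equivalent to the cumulative identity
\[
\sum_{\alpha\in P_w-\hat{0},\ \alpha\le\beta} C_{w\alpha}\;=\;1\qquad\text{for every } \beta\in P_w-\hat{0}.
\]
This extends Conjecture \ref{conj:coeff} from $\supp(\mathfrak{G}_w^{\mathrm{top}})$ to all of $P_w-\hat{0}$; implicitly it also forces $\mu_w(\hat{0},\beta)=0$ whenever $\beta\in P_w-\hat{0}\setminus\supp(\mathfrak{G}_w)$, which would fall out of the same inductive argument.

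By Theorem \ref{thm:pdformula}, the cumulative sum rewrites as a signed pipe-dream count
\[
\sum_{P\in\mathrm{PD}(w),\ \mathrm{wt}(P)\le\beta}(-1)^{\#\mathrm{crosses}(P)-\ell(w)},
\]
which must equal $1$ for every $\beta\in P_w-\hat{0}$. The extremal case $\beta=\mathrm{wt}(\overline{D(w)})$ is immediate: by Theorem \ref{thm:upwardsdivisibility} the weight constraint is vacuous, so the sum is $\mathfrak{G}_w(1,\ldots,1)=1$ via Proposition \ref{prop:grothprinspec}. For general $\beta$ I would proceed by descending induction on $\beta$ in $P_w$: choose a coordinate $i$ with $\beta+e_i\in P_w-\hat{0}$ (which exists whenever $\beta<\mathrm{wt}(\overline{D(w)})$) and reduce the identity at $\beta$ to the identity at $\beta+e_i$ together with the \emph{slab identity}
\[
\sum_{P\in\mathrm{PD}(w),\ \mathrm{wt}(P)\le\beta+e_i,\ \mathrm{wt}(P)_i=\beta_i+1}(-1)^{\#\mathrm{crosses}(P)-\ell(w)}\;=\;0.
\]
To prove the slab identity, I would build a sign-reversing involution that flips a single second crossing located in a row other than $i$, implemented by a Fomin-Kirillov-style canonical insertion/removal of a non-reduced crossing in the pipe dream complex $\Delta_w$ of Theorem \ref{thm:pdcomplex}, restricted so the flip avoids row $i$ and does not push any row past its cap.

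The hard part is constructing this slab involution so that it is simultaneously well-defined (the flipped configuration remains a pipe dream of $w$), weight-preserving in row $i$, cap-preserving in the other rows, and fixed-point-free. This is where the $0/1$ hypothesis on $\mathfrak{S}_w$ enters essentially: the twelve avoided patterns of \cite{zeroone} characterize precisely the permutations whose reduced pipe dreams are in bijection with $\supp(\mathfrak{S}_w)$ without weight collisions, and without this hypothesis the cumulative sums need not collapse to $1$. Showing that under pattern-avoidance every slab pipe dream admits a permissible removable or insertable second crossing outside row $i$ is the anticipated bottleneck; a cleaner alternative might reinterpret the slab identity as the vanishing of a reduced Euler characteristic of an explicit subcomplex of $\Delta_w$, leveraging the ball structure from Theorem \ref{thm:ball}.
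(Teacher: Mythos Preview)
The paper does not contain a proof of this statement: Conjecture~\ref{conj:mobius} is presented as an open conjecture, supported only by computer verification for all $w\in S_8$. There is therefore no ``paper's own proof'' to compare against.

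Your reduction via M\"obius inversion to the cumulative identity
\[
\sum_{\alpha\le\beta} C_{w\alpha}=1\qquad\text{for every }\beta\in P_w-\hat 0
\]
is correct and is a useful equivalent reformulation; the extremal case $\beta=\mathrm{wt}(\overline{D(w)})$ indeed follows from Proposition~\ref{prop:grothprinspec}. However, the rest of your proposal is not a proof but a proof \emph{strategy}, and the entire content of the conjecture is concentrated in the step you yourself flag as ``the hard part'': the construction of a sign-reversing involution establishing the slab identity. No such involution is supplied, and there are concrete obstructions to the scheme as described. Flipping a second crossing outside row~$i$ changes the weight in some row $k\neq i$; when you \emph{add} a cross you must ensure $\mathrm{wt}(P)_k$ does not exceed $\beta_k$, and when you \emph{remove} one you must ensure the designated cross actually exists. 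There is no evident canonical rule that pairs these moves bijectively while simultaneously respecting all row caps $\beta_k$, keeping row~$i$ fixed at $\beta_i+1$, and remaining inside $\mathrm{PD}(w)$. Nor is it clear how the twelve-pattern avoidance hypothesis of \cite{zeroone} would enter such a local flip argument; that hypothesis controls multiplicities of weights among \emph{reduced} pipe dreams, not the local structure of non-reduced ones needed here.

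In short, your outline correctly identifies an equivalent formulation and handles the top case, but the inductive step is a restatement of the open problem rather than a resolution of it. As things stand, Conjecture~\ref{conj:mobius} remains open.
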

	We have tested Conjecture \ref{conj:mobius} for all permutations $w\in S_8$.

	\begin{example}
		Set $w=351624$. Then $\mathrm{wt}(\overline{D(w)}) = (3,3,2,2)$, and 
		\begin{align*}
		\mathfrak{G}_w = &\phantom{a}
		(x_1^3x_2^2 x_3^2 
		+x_1^2x_2^3 x_3^2 
		+x_1^3x_2^3 x_3 
		+x_1^3x_2^2 x_3 x_4 
		+x_1^2x_2^3 x_3 x_4
		+x_1^3x_2^3 x_4 
		+x_1^2x_2^3 x_4^2 
		+x_1^3x_2^2 x_4^2 
		)\\
		&-(2 x_1^3x_2^3 x_3^2 
		+x_1^3x_2^2 x_3^2 x_4 
		+x_1^2x_2^3 x_3^2 x_4
		+3 x_1^3x_2^3 x_3 x_4
		+x_1^2x_2^3 x_3 x_4^2
		+x_1^3x_2^2 x_3 x_4^2
		+2 x_1^3x_2^3 x_4^2
		)\\
		&+(2 x_1^3x_2^3 x_3^2 x_4
		+2 x_1^3x_2^3 x_3 x_4^2 
		).
		\end{align*}
		The poset $P_w$ and its M\"obius function $\mu_{w}$ are shown in Figure \ref{fig:zeroonemobius}.
	\end{example}
	
	\begin{figure}[h]
		\begin{center}
			\includegraphics[scale=.85]{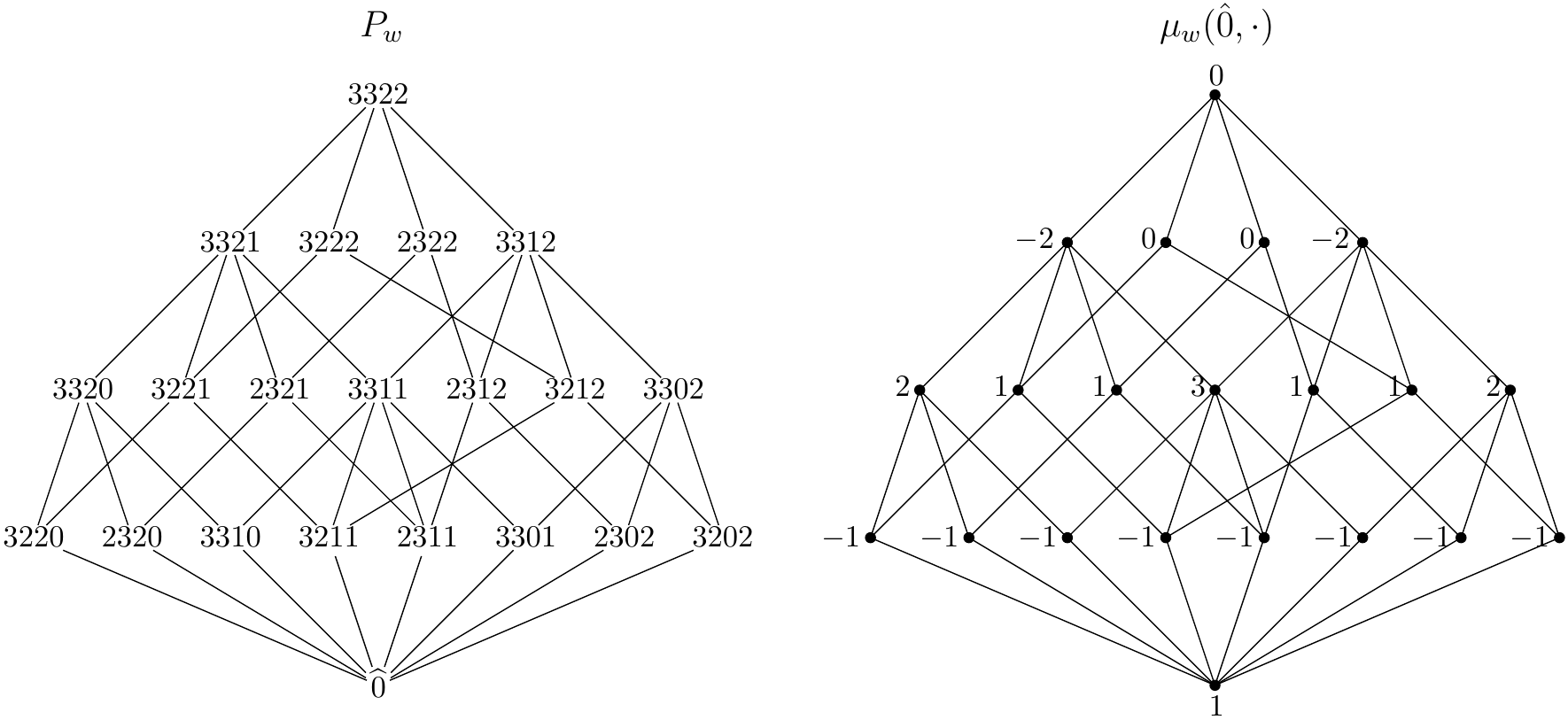}
			\caption{}
			\label{fig:zeroonemobius}
		\end{center}
	\end{figure}

%	%partially edited version pending testing stronger conjecture
%	\begin{namedconjecture}[\ref{conj:mobius}]
%		Let $w$ be a permutation such that all nonzero coefficients of $\mathfrak{S}_w$ equal 1. If $\mu_{P_w},\mu_{Q_w}$ are the M\"obius functions of $P_w,Q_w$ respectively, then
%		\[\mathfrak{G}_w=-\sum_{\substack{\alpha>\hat{0}\\\alpha\in P_w}}\mu_w(\hat{0}, \alpha)x^{\alpha}.\]
%	\end{namedconjecture}
 	
 	The class of permutations covered by Conjecture \ref{conj:mobius} is characterized by the following result.
 	\begin{theorem}[{\cite[Theorem 4.8]{zeroone}}]
 		A permutation $w$ avoids the patterns 12543, 13254, 13524, 13542, 21543, 125364, 125634, 215364, 215634, 315264, 315624,
 		and 315642
 		if and only if all nonzero coefficients of $\mathfrak{S}_w$ equal 1.
 	\end{theorem}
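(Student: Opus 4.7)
Via the pipe dream formula of Theorem \ref{thm:pdformula}, the coefficient of $x^\alpha$ in $\mathfrak{S}_w$ equals $\#\{P\in\mathrm{RPD}(w):\mathrm{wt}(P)=\alpha\}$. So ``all nonzero coefficients of $\mathfrak{S}_w$ equal $1$'' is equivalent to the assertion that the weight map $\mathrm{wt}\colon\mathrm{RPD}(w)\to\mathbb{Z}^n$ is injective, and the goal is to show that this injectivity holds if and only if $w$ avoids the twelve listed patterns. This reformulation is the starting point for both directions.

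\textbf{Necessity (containment implies a coefficient $\geq 2$).} First I would verify directly, by computing $\mathfrak{S}_v$ for each of the twelve listed permutations $v$ using divided differences or pipe dreams, that $\mathfrak{S}_v$ has a coefficient $\geq 2$. To propagate this to an arbitrary $w$ containing $v$ as a pattern at positions $i_1<\cdots<i_k$, I would use a pipe-dream lifting argument: starting from a fixed reduced pipe dream $R$ of $w$, zoom in on the subgrid carved out by the rows and columns dictated by the pattern embedding and identify the induced configuration there (after trivial normalization) with a reduced pipe dream of $v$. Then, given two distinct equal-weight reduced pipe dreams $P_1,P_2$ of $v$, construct reduced pipe dreams $Q_1,Q_2$ of $w$ with the same weight by splicing the $P_i$'s into $R$. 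The technical step here is to verify that the splicing operation preserves reducedness; this can be controlled using the southeast-triangle structure of pipe dreams together with the pattern positions.

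\textbf{Sufficiency (avoidance implies all coefficients equal $1$).} This is the main obstacle. I would proceed by induction on $\ell(w)$, treating dominant $w$ (for which $\mathfrak{S}_w$ is a single monomial) as the base case. For the inductive step, I would analyze the pipe-dream complex $\Delta_w$ of Theorem \ref{thm:pdcomplex}: any two reduced pipe dreams of $w$ are connected by a sequence of chute/ladder moves, and a \emph{weight-preserving} sequence $P_1\to \cdots \to P_r = P_1'$ with $\mathrm{wt}(P_1)=\mathrm{wt}(P_1')$ must involve crosses migrating between distinct rows in a way that returns the row sums to their original values. Classifying the minimal such migrations and showing that each realizes precisely one of the twelve patterns as a sub-Rothe-diagram of $D(w)$ is the crux. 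The five short patterns should come from ``two-row swaps'' and the six six-letter patterns from ``three-row cycles'' whose weight-preserving nature cannot be seen using fewer rows.

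\textbf{Main obstacle.} The hardest step will be this classification of minimal weight-preserving configurations in $\mathrm{RPD}(w)$, in particular showing that the six-letter patterns genuinely appear as obstructions not reducible to the shorter ones. I expect a careful hand-analysis matching weight-preserving cross-migrations to Rothe-diagram patterns, ideally organized via an intermediate structural statement (for example, that for avoiding $w$ the diagram $D(w)$ is a disjoint union of ``nested'' hook shapes), which would let the induction reduce the problem to a bounded number of base configurations that can be verified by direct computation.
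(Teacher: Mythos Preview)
The paper does not prove this theorem; it is quoted as \cite[Theorem~4.8]{zeroone} and used as a black box to describe the class of permutations to which Conjecture~\ref{conj:mobius} applies. There is therefore no ``paper's own proof'' to compare your proposal against.

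On the substance of your sketch: the necessity direction is more delicate than you suggest. Reduced pipe dreams do not restrict along pattern embeddings in the way your ``splicing'' argument assumes---crosses of the ambient permutation lying outside the chosen rows and columns can, and typically do, interfere with reducedness when you try to graft in $P_1,P_2$. The actual argument in \cite{zeroone} (and in related work of Fink--M\'esz\'aros--St.~Dizier) goes through a different mechanism, essentially a monotonicity statement for Schubert coefficients under pattern containment that is proved via a careful combinatorial or geometric argument rather than naive restriction. For sufficiency, your outline via chute/ladder moves and a classification of minimal weight-preserving migrations is plausible in spirit, but you have correctly identified that the classification step is the entire content of the theorem; what you have written is a plan rather than a proof, and the proposed intermediate structural statement (that $D(w)$ decomposes into ``nested hook shapes'') is not true for all twelve-pattern-avoiding $w$.
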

	
	We conjecture one last property connecting the poset structure of $\supp(\mathfrak{G}_w)$ to the coefficients of $\mathfrak{G}_w$.
	\begin{namedconjecture} [\ref{conj:coeff}]
		Fix $w\in S_n$ and let $\mathfrak{G}_w = \sum_{\alpha\in\mathbb{Z}^n}C_{w\alpha}x^\alpha$. For any $\beta\in\supp(\mathfrak{G}_w^{\mathrm{top}})$,
		\[\sum_{\alpha\leq \beta} C_{w\alpha} = 1. \]
	\end{namedconjecture}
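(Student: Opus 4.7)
The plan is to first translate the identity into a statement about pipe dreams: by Theorem~\ref{thm:pdformula},
\[
\sum_{\alpha \leq \beta} C_{w\alpha} = \sum_{P \in \mathrm{PD}(w),\ \mathrm{wt}(P) \leq \beta} (-1)^{\#\mathrm{crosses}(P) - \ell(w)};
\]
denote the set of pipe dreams on the right-hand side by $\mathrm{PD}_\beta(w)$. The case when $\supp(\mathfrak{G}_w)$ has a unique maximum (so $\mathrm{PD}_\beta(w) = \mathrm{PD}(w)$) is exactly the principal specialization $\mathfrak{G}_w(1,\ldots,1) = 1$ established in Proposition~\ref{prop:grothprinspec}. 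The goal is to extend this argument to the truncated sum over $\mathrm{PD}_\beta(w)$ for each $\beta \in \supp(\mathfrak{G}_w^{\mathrm{top}})$.

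My proposed approach is to construct a sign-reversing involution $\iota$ on $\mathrm{PD}_\beta(w)$ with a unique fixed point contributing $(-1)^0 = 1$. The involution should toggle a single tile, converting a second crossing into an elbow or an elbow into a second crossing: such a move changes $\#\mathrm{crosses}$ by one and $\mathrm{wt}(P)$ by $\pm e_i$ in a single row. Staying inside $\mathrm{PD}_\beta(w)$ is automatic for elbow-promotions (which decrease row weight), but elbow-to-crossing promotions require $\mathrm{wt}(P)_i < \beta_i$. I would define $\iota$ via a canonical priority rule---for example, the northwesternmost tile admitting either a legal toggle within $\mathrm{PD}_\beta(w)$---arranged so that the rule is manifestly involutive and the unique fixed point is a reduced pipe dream for $w$ that is saturated against $\beta$ in every row where a promotion would otherwise succeed.

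The main obstacle is simultaneously enforcing three constraints on the toggle: (i) the resulting configuration must remain a pipe dream for $w$, so the strand routing is preserved; (ii) the row-weight cap $\mathrm{wt}(P) \leq \beta$ must be maintained; and (iii) the rule must be a genuine involution with exactly one unmatched pipe dream. The top-support hypothesis $|\beta| = \deg \mathfrak{G}_w$ is what should make the fixed-point count come out to $1$, since this hypothesis forces $\beta$ to be a maximal element of $\supp(\mathfrak{G}_w)$ realized by some pipe dream. An alternative attack uses the Knutson--Miller complex $\Delta_w$ (Theorems~\ref{thm:pdcomplex} and~\ref{thm:ball}): since the principal-specialization identity reflects that the interior of the ball $\Delta_w$ has Euler characteristic~$1$, one could instead seek a discrete Morse matching on the truncated face poset $\mathrm{PD}_\beta(w)$ with a single critical face. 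In either approach, the new difficulty is reconciling the row-weight cap $\leq \beta$ with the combinatorial or topological structure that underlies the unrestricted principal-specialization identity.
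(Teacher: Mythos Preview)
The statement you are attempting is Conjecture~\ref{conj:coeff}, which the paper leaves \emph{open}. The paper does not contain a proof of it in general. What the paper establishes is only the special case in which $\supp(\mathfrak{G}_w)$ has a unique maximal element: then the sum $\sum_{\alpha\leq\beta}C_{w\alpha}$ is the full principal specialization $\mathfrak{G}_w(1,\ldots,1)$, and Proposition~\ref{prop:grothprinspec} computes this as $1$ via the Euler characteristic of the pipe-dream ball $\Delta_w$. Combined with Theorem~\ref{thm:fireworks-support}, this yields the Corollary for fireworks permutations. There is no argument in the paper for an arbitrary $\beta\in\supp(\mathfrak{G}_w^{\mathrm{top}})$ when the maximum is not unique.

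Your proposal is therefore an attack on a problem the paper does not solve, and as written it is an outline rather than a proof. The translation to the signed count over $\mathrm{PD}_\beta(w)$ is correct, and you correctly identify that the unique-maximum case is exactly Proposition~\ref{prop:grothprinspec}. But the core of your plan---a sign-reversing involution on $\mathrm{PD}_\beta(w)$ with a single fixed point---is only described, not constructed. Two concrete gaps: first, the toggle ``second crossing $\leftrightarrow$ elbow'' is not a local move on tiles, since whether a given elbow becomes a second crossing depends on the global strand routing, so a northwesternmost-priority rule is not evidently involutive; second, you give no reason why the set of reduced pipe dreams that are ``saturated against $\beta$'' (no promotable elbow in any row $i$ with $\mathrm{wt}(P)_i<\beta_i$) should have size exactly one. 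The discrete-Morse alternative has the same missing piece: you would need to show that restricting $\mathrm{int}(\Delta_w)$ to faces with row weights $\leq\beta$ still has reduced Euler characteristic $(-1)^d$, and nothing in the paper supplies this. You have correctly located the difficulty; it remains the difficulty.
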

	
	When $\supp(\mathfrak{G}_w)$ has a unique maximum element (such as for fireworks permutations),
%	is the singleton $\{\mathrm{wt}(\overline{D}(w)) \}$ , 
	Conjecture \ref{conj:coeff} specializes to
%	\avecom{and 1423 avoiding?}
	\[\mathfrak{G}_w(1,\ldots,1)=1.\] 
	The principal specialization of $\beta$-Grothendieck polynomials was previously considered by Kirillov \cite[Comment 3.2]{dunkl} as well as by the first author of this paper \cite{belong}. While we assume that $\mathfrak{G}_w(1,\ldots,1)=1$ for all $w \in S_n$ is known to experts, we did not find a proof of this fact in the literature. We include a short proof below based on pipe dream complexes.

	We recall basic facts about the Euler characteristic of simplicial complexes. Let $\Delta$ be any simplicial complex. Denote the interior faces of $\Delta$ by $\mathrm{int}(\Delta)$, and the boundary faces by $\mathrm{bd}(\Delta)$. Suppose $\Delta$ has $f_i$ faces of dimension $i$ for each $i\geq 0$. Recall the \emph{Euler characteristic} $\chi(\Delta)$ is the alternating sum
	\[\chi(\Delta) = f_0-f_1+f_2-f_3+\cdots \]
	It is well-known that $\chi(\Delta)=1$ when $\Delta$ is a ball, and that $\chi(\Delta)=1+(-1)^{d-1}$ when $\Delta$ is the boundary of a $d$-dimensional ball. Recall the pipe dream complex $\Delta_w$ of $w\in S_n$ (see Theorem \ref{thm:pdcomplex} and Example \ref{exp:pdcomplex}).
	
	\begin{proposition}
		\label{prop:grothprinspec}
		For any permutation $w\in S_n$, 
		\[\mathfrak{G}_w(1,\ldots, 1) = 1.\]
	\end{proposition}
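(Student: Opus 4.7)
The plan is to express $\mathfrak{G}_w(1,\ldots,1)$ as a signed count of pipe dreams and then interpret that signed count as an Euler characteristic computation on the pipe dream complex $\Delta_w$. Setting all variables equal to $1$ in Theorem \ref{thm:pdformula} gives
\[\mathfrak{G}_w(1,\ldots,1) = \sum_{P \in \mathrm{PD}(w)} (-1)^{\#\mathrm{crosses}(P)-\ell(w)}.\]
A pipe dream with $k$ crosses uses $\binom{n}{2}-k$ elbow positions in the northwest staircase and so corresponds to a face $F_P$ of $\Delta_w$ of dimension $\binom{n}{2}-k-1$. Writing $d:=\binom{n}{2}-\ell(w)-1=\dim\Delta_w$, I rearrange
\[(-1)^{\#\mathrm{crosses}(P)-\ell(w)} = (-1)^d(-1)^{\dim F_P},\]
so that
\[\mathfrak{G}_w(1,\ldots,1) = (-1)^d\sum_{P\in\mathrm{PD}(w)}(-1)^{\dim F_P}.\]

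Next, I use Theorem \ref{thm:pdcomplex}: the set $\mathrm{PD}(w)$ is exactly the collection of interior faces of $\Delta_w$. Splitting the ordinary Euler characteristic of $\Delta_w$ into interior and boundary contributions gives
\[\sum_{P\in\mathrm{PD}(w)}(-1)^{\dim F_P} = \chi(\Delta_w) - \chi(\mathrm{bd}(\Delta_w)).\]
For $w\neq w_0$, Theorem \ref{thm:ball} says $\Delta_w$ is a $d$-ball, so $\chi(\Delta_w)=1$ and $\chi(\mathrm{bd}(\Delta_w))=1+(-1)^{d-1}$ (the Euler characteristic of a $(d-1)$-sphere, with the convention that the boundary of a $0$-ball is empty with $\chi=0$). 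The interior sum is therefore $-(-1)^{d-1}=(-1)^d$, and multiplying by the prefactor $(-1)^d$ yields $1$. The case $w=w_0$ follows immediately from $\mathfrak{G}_{w_0}=x_1^{n-1}\cdots x_{n-1}$.

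The main thing requiring care is the sign bookkeeping in the identification of pipe dreams with faces of $\Delta_w$, together with the low-dimensional edge case $d=0$ where $\Delta_w$ is a single point and its boundary is empty rather than a genuine sphere. Beyond verifying these conventions, the argument is a clean translation of the pipe dream formula into the topological language already set up by Theorem \ref{thm:pdcomplex} and Theorem \ref{thm:ball}, so I do not expect any substantial obstacle.
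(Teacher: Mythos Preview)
Your proof is correct and follows essentially the same route as the paper's: both reduce $\mathfrak{G}_w(1,\ldots,1)$ to $(-1)^d\chi(\mathrm{int}(\Delta_w))$ via the pipe dream formula and Theorem~\ref{thm:pdcomplex}, then compute $\chi(\mathrm{int}(\Delta_w))=\chi(\Delta_w)-\chi(\mathrm{bd}(\Delta_w))=(-1)^d$ using that $\Delta_w$ is a $d$-ball. You have simply made the sign bookkeeping $(-1)^{\#\mathrm{crosses}(P)-\ell(w)}=(-1)^d(-1)^{\dim F_P}$ explicit where the paper leaves it to the reader, and you have noted the $d=0$ edge case; neither addition changes the argument.
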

	\begin{proof}
		Let $w\in S_n$. If $w=w_0$, then clearly the result holds. Otherwise, Theorem \ref{thm:ball} implies $\Delta_w$ is a ball of dimension $d=\binom{n}{2}-\ell(w)-1$. Thus
		\[\chi(\mathrm{int}(\Delta_w)) = \chi(\Delta_w) - \chi(\mathrm{bd}(\Delta_w)) = 1-(1+(-1)^{d-1}) = (-1)^d. \]
		From Theorems \ref{thm:pdformula} and \ref{thm:pdcomplex}, one observes $\mathfrak{G}_w(1,\ldots,1) = (-1)^d\chi(\mathrm{int}(\Delta_w))$. Hence $\mathfrak{G}_w(1,\ldots,1) = 1.$
	\end{proof}
	Alternatively one can deduce the preceding result from \cite[Lemma 2.3]{belong}. Theorem \ref{thm:fireworks-support} yields the following.
	
	\begin{corollary}
		Conjecture \ref{conj:coeff} holds for fireworks permutations.
	\end{corollary}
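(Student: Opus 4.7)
The plan is to reduce the corollary directly to the principal specialization result of Proposition \ref{prop:grothprinspec}, using the very strong information that Theorem \ref{thm:fireworks-support} gives us about the top-degree support of a fireworks permutation.

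First I would observe that when $w$ is fireworks, Theorem \ref{thm:fireworks-support} gives $\supp(\mathfrak{G}_w^{\mathrm{top}}) = \{\mathrm{wt}(\overline{D(w)})\}$, so the only $\beta$ for which we need to verify the conjectured identity is the singular element $\beta = \mathrm{wt}(\overline{D(w)})$. Next I would invoke Theorem \ref{thm:upwardsdivisibility}, which says that $\mathrm{wt}(\overline{D(w)}) \geq \alpha$ componentwise for every $\alpha \in \supp(\mathfrak{G}_w)$. In particular, this means the condition $\alpha \leq \beta$ is satisfied by every exponent appearing in $\mathfrak{G}_w$, and so
\[\sum_{\alpha\leq \beta} C_{w\alpha} \;=\; \sum_{\alpha\in\mathbb{Z}^n} C_{w\alpha}\;=\; \mathfrak{G}_w(1,\ldots,1).\]

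Finally, Proposition \ref{prop:grothprinspec} establishes that $\mathfrak{G}_w(1,\ldots,1) = 1$, completing the argument. There is essentially no obstacle here: the combinatorial content is entirely packaged in Theorem \ref{thm:fireworks-support} (uniqueness of the top-degree exponent) and Theorem \ref{thm:upwardsdivisibility} (the top-degree exponent dominates the whole support), so the corollary is a two-line consequence of already-proven results. The only thing to be careful about is to make sure we are summing over all $\alpha \in \supp(\mathfrak{G}_w)$ rather than only, say, those of a fixed degree, which is exactly what Theorem \ref{thm:upwardsdivisibility} guarantees.
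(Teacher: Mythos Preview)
Your proposal is correct and is essentially the paper's own argument, just spelled out in more detail: the paper notes that when $\supp(\mathfrak{G}_w)$ has a unique maximum element (which for fireworks $w$ follows from Theorem~\ref{thm:fireworks-support} together with Theorem~\ref{thm:upwardsdivisibility}), Conjecture~\ref{conj:coeff} reduces to $\mathfrak{G}_w(1,\ldots,1)=1$, and then applies Proposition~\ref{prop:grothprinspec}.
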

	
	\section*{Acknowledgments} 
	We are grateful to Allen Knutson, Vic Reiner, Ed Swartz, and Alexander Yong for helpful discussions and feedback.
	
	\bibliographystyle{plain}
	\bibliography{grothendieck-bibliography}
\end{document}